\documentclass{amsart}

\usepackage[dvipsnames]{xcolor}

\usepackage[
colorlinks=true,    % Removes the boxes and colors the text instead
linkcolor=NavyBlue,     % Color for \ref, \eqref, \cref, and table of contents
citecolor=ForestGreen,    % Color for \cite
urlcolor=blue,      % Color for external URLs
breaklinks=true     % Allows links to wrap over line breaks
]{hyperref}
\usepackage{hyperref-patches}

\usepackage{enumitem}
\usepackage{mathrsfs}
\usepackage{float}
\usepackage{bm}
\usepackage{amsrefs}
\usepackage{esint}

\usepackage{cleveref}
\newtheorem{theorem}{Theorem}[section]
\newtheorem*{theorem*}{Theorem}
\newtheorem{lemma}[theorem]{Lemma}
\newtheorem{proposition}[theorem]{Proposition}
\newtheorem{cor}[theorem]{Corollary}
\usepackage[T1]{fontenc}
\usepackage{times}
\usepackage{bbold}

\crefname{lemma}{lemma}{lemmas}
\crefname{theorem}{theorem}{theorems}
\crefname{proposition}{proposition}{propositions}
\crefname{cor}{corollary}{corollaries}
\Crefname{lemma}{Lemma}{Lemmas}
\Crefname{theorem}{Theorem}{Theorems}
\Crefname{proposition}{Proposition}{Propositions}
\Crefname{cor}{Corollary}{Corollaries}

\theoremstyle{definition}

\newtheorem*{remark}{Remark}
\newtheorem*{claim}{Claim}
\numberwithin{equation}{section}

\newcommand{\N}{\mathbb{N}}
\newcommand{\R}{\mathbb{R}}

\renewcommand{\P}{\mathbb{P}}

\renewcommand{\S}{\mathcal{S}}
\renewcommand{\L}{\mathscr{L}}
\newcommand{\density}{\delta}
\newcommand{\mtc}{\eta}

\DeclareMathOperator{\Vol}{Vol}

\begin{document}
	
	\title[Multiplication tables with restricted prime factors]{Multiplication tables for integers with restricted prime factors}
	
	%    Information for first author
	\author{Jeremy Schlitt}
	%    Address of record for the research reported here
	\address{D\'epartement de Math\'ematiques et de Statistique, Universit\'e de Montr\'eal, CP 6128 succ. centre, Centre Ville, Montr\'eal, Qc H3T 1J4, CANADA}
	%    Current address
	%    \thanks will become a 1st page footnote.
	\thanks{The author was supported in part by the Bourse d'Excellence - Banque Nationale.}
	
	%    Information for second author

	\date{\today}

	\keywords{Probabilistic Number Theory, Multiplication Table, Divisors}
	
	\begin{abstract}
		Let $Q$ be a set of primes with relative density $\density$. We count integers in $[1,x]$ with prime factors all in $Q$ that also have a divisor in $(y,2y]$. We establish the order of magnitude for all $\density \in (0,1]$. This generalizes the case $\density = 1$ from the 2008 work of Ford. We also show that there is a phase transition at the critical point $\density = 1/\log 4$, for which we explicitly determine the behaviour. Along the way, we prove estimates for uniform order statistics with strong barrier conditions which are of independent interest.
	\end{abstract}
	\maketitle

	%% The correct journal style for \specialsection is all uppercase; a known bug
	%% in amsart.cls prevents this, so input must be uppercase until it is fixed.
	%\specialsection*{This is a Special Section Head}
	
	%%%%%%%%%%%%%%%%%%%%%%%%%%%%%%%%%%%%%%%%%%%%%%%%%%%%%%%%%%%%%%%%%%%%%%%%
	
	%%%%%%%%%%%%%%%%%%%%%%%%%%%%%%%%%%%%%%%%%%%%%%%%%%%%%%%%%%%%%%%%%%%%%%%%
	
	\section{Introduction}
	In his seminal 2008 work \cite{ford2008distribution}, Ford established the correct order of magnitude for the number $H(x,y,z)$ of integers not exceeding $x$ with a divisor in the interval $(y,z]$. For example, one consequence of his work is:
	\begin{theorem}[Ford \cite{ford2008distribution}, 2008] Suppose $100 \leq y \leq \sqrt{x}$. Then\label{fordthm}
		\begin{equation}\label{fordmain H}
			H(x,y,2y) \asymp \frac{x}{(\log y)^\mtc(\log \log y)^{3/2}},
		\end{equation}
		where 
		$$\mtc:=1-\frac{1+\log \log 2}{\log 2}.$$
	\end{theorem}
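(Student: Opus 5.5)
Writing $\L(a) = \bigcup_{d\mid a}[\log(d/2),\log d)$, I would prove Ford's theorem in two halves and reduce both to one estimate about uniform order statistics.

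\textbf{Reduction.} Every $n\le x$ with a divisor in $(y,2y]$ has such a divisor $d$. Write $n=apb$, where $a$ is the part of $n$ composed of primes $\le$ some threshold tied to $y$, $p$ is the next prime, and the primes of $b$ are larger. Then $H(x,y,2y)$ is controlled by
$$\sum_{a}\frac{x}{a}\cdot\frac{\Vol(\L(a))}{\log y}$$
up to a factor that depends on $\log(P^+(a))$. The reason is that the measure of the union of the intervals $[\log(d/2),\log d)$, for $d\mid a$, governs how many $n$ in the multiplicative progression have a divisor landing in $(y,2y]$.

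\textbf{Upper bound.} Use the trivial bound $\Vol(\L(a))\le \min(\tau(a)\log 2,\log a)$ together with a sharper recursive inequality. If $a=a'p$, then $\L(a)\subseteq \L(a')\cup(\L(a')+\log p)$, so
$$\Vol(\L(a))\le 2\Vol(\L(a')).$$
Group the $a$ by $k=\omega(a)$ and write $\log\log p_j=\xi_j\log\log y$. The sum $\sum_{\omega(a)=k}\Vol(\L(a))/a$ becomes an expectation over $k$ uniform order statistics $\xi_1\le\dots\le\xi_k$ in $[0,1]$.

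\textbf{Barrier estimate.} The key fact is that $\Vol(\L(a))$ is small unless the prime factors are spread out. The number of divisors made from primes below $p_j$ is $2^j$, and those divisors cover length at most about $\log p_j\approx(\log y)^{\xi_j}$. So the contribution is bounded by
$$\min_j 2^{j}(\log y)^{-\xi_j}$$
times the remaining part. This reduces the problem to bounding
$$\P\Big(\xi_j\ge \frac{j-C}{\log\log y\,\log 2}\ \text{for all } j\Big),$$
possibly with an exponential weight on the deviation. That probability is $\asymp \frac{(C+1)^2}{k}$, which is a ballot-type theorem for uniform order statistics. Summing over $k$ near $\log\log y/\log 2$ with the Poisson weights $(\log\log y)^k/k!$ produces $(\log y)^{-\mtc}$ from Stirling's formula. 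The $k^{-1}$ from the ballot estimate combines with the $k^{-1/2}$ from the local Poisson window to give $(\log\log y)^{-3/2}$.

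\textbf{Lower bound.} Restrict to those $a$ whose order statistics satisfy the barrier. For them, apply Cauchy–Schwarz:
$$\Vol(\L(a))\gg \frac{\tau(a)^2}{W(a)},\qquad W(a)=\#\{(d,d'):d,d'\mid a,\ |\log d/d'|<\log 2\}.$$
Then show that the average of $W(a)$ over the restricted set is $\ll\tau(a)$ times a barrier-controlled factor, again using order statistics. The resulting lower bound matches the upper bound.

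\textbf{Main obstacle.} The hardest step is the uniform ballot estimate
$$\P\big(\xi_j\ge (j-u)/(\log\log y\,\log 2)\ \ \forall j\big)\asymp \frac{(u+1)^2}{k}.$$
It must hold uniformly in $u$ and must remain valid with the exponential tilt needed in the upper bound. I would first try to prove it by converting the uniform order statistics into normalized partial sums of exponential random variables, i.e. a random walk, and then applying the classical barrier estimates for random walks with bounded increments.
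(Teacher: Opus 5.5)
Your architecture is the paper's (the statement is the case $\delta=1$ of \Cref{mainthm}, proved along Ford's lines). You reduce to $\sum L(a)/a$, bound it above via $L(p_1\cdots p_k)\le 2^{k-j}(\log(p_1\cdots p_j)+\log 2)$ and order statistics, and bound it below by Cauchy--Schwarz against $W(a)$. Several ingredients are misstated, though.

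\textbf{Normalization and heuristic.} The barrier is $\xi_j\ge (j-u)/v$ with $v=\lfloor \log\log y/\log 2\rfloor$, so the $\log 2$ belongs in the numerator. The heuristic weight is $2^{-j}(\log y)^{\xi_j}$, not its reciprocal.

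\textbf{The ballot estimate.} The estimate you call the main obstacle is false uniformly. The correct bound is $Q_k(u,v)\ll (u+1)(u+v-k+1)/k$ (\Cref{order stats lem}(ii)). For $u=1$ and $k\le v$, Daniels' formula gives $Q_k(1,v)\asymp (v-k+1)/v$, which is $\asymp 1$ at $k=v/2$, not $O(1/k)$. This is harmless for $\delta=1$ only for two reasons. First, the weights are $\lambda^k/k!$ with $\lambda=2\log\log y$; you must keep the factor $2^k=\tau(a)$, since with $(\log\log y)^k/k!$ as you wrote you would be in the subcritical regime. Second, $\lambda/v\to\log 4>1$, so the sum concentrates on $k=v-O(1)$.

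\textbf{The upper bound region and weight.} The region actually produced by $\min_j 2^{-j}(2^{v\xi_1}+\cdots+2^{v\xi_j}+1)$ is the partial-sum region $\mathcal{T}(k,v,\gamma)$ from the proof of \Cref{lem UB 5}. It strictly contains the barrier region, and its volume bound is imported from Koukoulopoulos rather than read off a ballot probability. Your reduction also leaves a weight $1/\log^2(P^+(a)+y^{3/20}/a)$ in place. Replacing it by $(\log y)^{-2}$ is a separate step (\Cref{lem UB 3}, via \Cref{dimitris lemma 2.2}).

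\textbf{The genuine gap is in the lower bound.} Averaging over a class $\mathscr{A}(\bm{b})$, \Cref{LB tau} and \Cref{LB W} give
$\sum W(a)/a\ll\big(\sum\tau(a)/a\big)\big[1+\sum_j 2^{-j+b_M+\cdots+b_j}\big]$,
and the bracket is essentially $\sum_i 2^{i-v\xi_i}$. The barrier $v\xi_i\ge i-C$ you impose bounds this only by $k2^C$, and it is $\gg k$ on parts of that region. So Cauchy--Schwarz as you describe it loses a factor $k\asymp\log\log y$ and gives $(\log\log y)^{-5/2}$.

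The missing idea is entropic repulsion: order statistics conditioned on the weak barrier stay far above it away from the endpoints. The paper builds this into the set via the strong barrier $v\xi_i\ge i+\min\{i,k-i\}^{1/7}-C$ (condition (iii) of $\mathcal{Y}_k$). This makes the bracket $O(2^C)$ pointwise (\Cref{lem from L to Yk}). It then shows this costs only a constant factor. \Cref{lemma better smirnov} proves $\P[\mathscr{B}]=\P[B](1+O(C^{-\epsilon}))$ by a union bound over $j$: split the sample at $\xi_j$ and apply \Cref{order stats lem}(ii) on both sides. This gives a conditional density $\ll k^{3/2}(y+2)^2j^{-3/2}(k-j+1)^{-3/2}$ for $v\xi_j-j$. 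Combined with Daniels' formula it yields $\Vol(\mathcal{Y}_k)\gg (v-k+1)/(v\,k!)$ (\Cref{lem LB Yk}).

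Ford instead restricts to $\sum_i 2^{i-v\xi_i}\le R$, and one could equally show the conditional mean of the bracket under $B$ is $O(1)$. Some such step is indispensable, and it needs this two-sided local estimate for $v\xi_j-j$, not just the ballot probability. Your random-walk route to the Smirnov bounds is a legitimate alternative to citing Daniels, Ford and Koukoulopoulos, but it must deliver that local estimate. Note also that the exponential increments are unbounded, and normalizing by $E_1+\cdots+E_{k+1}$ turns the walk into a bridge.
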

	The number $\mtc$ is called the \textit{multiplication table constant}. The case $z =2y$ mentioned above is of particular interest, as it allows one to provide the correct order of magnitude to Erd\H{o}s' \textit{Multiplication Table Problem} \cite{Erdos1960}. Indeed, let $A(N)$ denote the number of distinct integers in the $N \times N$ multiplication table, that is to say
	$$A(N) := \#\{ab: a,b\leq N\}.$$
	Using \Cref{fordthm}, it can be shown \cite[Corollary 3]{ford2008distribution} that $$A(N) \asymp \frac{N^2}{(\log N)^\mtc(\log \log N)^{3/2}}.$$
	
	The aim of the present paper is to study multiplication tables where the row and column numbers are restricted to a special set which is of arithmetic interest. To be more precise, we start with an infinite set $Q$ of prime numbers for which we suppose there exists ${\density \in (0,1]}$ and $\kappa \ge 2$ such that
	
	\begin{equation}\label{definition of delta}
		\Big|\#\big(Q\cap[1,x]\big) - \frac{ \delta x}{\log x}\Big|\le \frac{\kappa x}{(\log x)^2}, \quad ( x\ge2 ).
	\end{equation}
	
	 From here, we define the set $$\S_Q:= \{n \in \N : p|n \implies p \in Q\}.$$
	We will frequently write $\S$ in place of $\S_Q$ for ease of notation. We now define
	\begin{equation*}
		H_Q(x,y,z) = \#\{n \in \S_Q\cap[1,x]: \exists d|n\text{ s.t. }d\in (y,z]\}.
	\end{equation*}
	In the present paper, we will focus on the important case $z=2y$. When $Q$ is the set of all primes ($\density = 1$) and $z=2y$, we recover the usual $H(x,y,2y)$ of \eqref{fordmain H}. We now state our main theorem. 
	\begin{theorem} \label{mainthm}
		Let $c \in (0,1/10]$ and $\kappa \ge 2$ be given. Let $y_0=y_0(c,\kappa)$ be a sufficiently large number in terms of $c,\kappa$. Suppose that ${y_0 \leq y \leq \sqrt{x}}$. One has, uniformly over all sets of primes $Q$ satisfying \eqref{definition of delta} with $\delta \ge c$, that
		\begin{equation}
			H_Q(x,y,2y) \asymp_{\kappa,c} \frac{x}{(\log x)^{1-\density}} \cdot \frac{1}{(\log y)^{G(\density)}} \cdot E(y;\density),
		\end{equation}
		where $$G(\density) := \begin{cases}
			1-\density, \quad &\density \leq \frac{1}{\log 4};\\
			\noalign{\vskip-12pt}\\
			\density -  \frac{1+\log(\density \log 2)}{\log 2}, & \density \geq \frac{1}{\log 4},
		\end{cases}$$
		and
		$$E(y;\density) := \begin{cases}
			\max\big\{\frac{1}{\log 4} - \density, \frac{1}{\sqrt{\log \log y}}\big\} & \text{if } \density \leq \frac{1}{\log 4}, \\
			\noalign{\vskip 5pt}
			\frac{1}{(\log \log y)^{3/2} \max\big\{(\density - \frac{1}{\log 4})^2, \frac{1}{\log \log y}\big\}} & \text{if } \density > \frac{1}{\log 4}.
		\end{cases}$$
	\end{theorem}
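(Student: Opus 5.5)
We follow Ford's strategy from \cite{ford2008distribution}, adapted to the sparser set $\S_Q$. First we reduce to a local problem. Write $n=abc$ with $P^+(a)\le y$, the primes of $b$ lying in $(y,x]$, and so on. By standard sieve/Selberg--Delange estimates for $\S_Q$ (using \eqref{definition of delta}), the number of $n\le x$ in $\S_Q$ whose $y$-smooth part equals $a$ is $\asymp \frac{x}{a\log x}\cdot\frac{(\log x)^{\density}}{(\log y)^{\density}}$, roughly, for $a$ not too large. Hence
$$H_Q(x,y,2y)\asymp \frac{x}{(\log x)^{1-\density}(\log y)^{\density}}\sum_{\substack{a\in\S,\ P^+(a)\le y\\ \tau(a,y)\ge1}}\frac1a ,$$
where $\tau(a,y)=\#\{d\mid a: y<d\le 2y\}$. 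For the upper bound, divisors involving primes $>y$ are absorbed into the smooth part by a routine argument. The problem thus becomes estimating the logarithmic density of $y$-smooth $a\in\S$ with a divisor in $(y,2y]$. We would further decompose by $k=\omega(a)$. The $Q$-restricted weight of $k$-prime-factor smooth numbers is $\approx (\density\log\log y)^k/k!$.

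The key lemma is Ford's: for $a$ with $k$ prime factors, writing $\xi_j=\log\log p_j/\log\log y$, the probability of having a divisor in $(y,2y]$ is $\asymp \min(1, 2^k/\log y)$ times a factor that enforces the barrier condition. That condition says the ordered $\xi_j$ must satisfy $\xi_j\ge j/\log 2\log\log y - O(1)$-type lower bounds, since otherwise $L(a)$, the measure of $\{u:\tau(a,e^u)\ge1\}$, is too small. Concretely, we would prove
$$\sum_{a}\frac{\mathbb 1_{\tau\ge1}}{a}\asymp\sum_k \frac{(\density\log\log y)^k}{k!}\min\Bigl(1,\frac{2^k}{\log y}\Bigr)\cdot\frac{\P_k(\text{barrier})}{\text{(local factor)}} ,$$
where $\P_k$ is the probability that uniform order statistics $U_{(1)}<\dots<U_{(k)}$ on $[0,1]$ stay above the line $j\mapsto (j-v)/k'$ with $k'=\log\log y/\log 2$. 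The upper bound would go via $L(a)\le\min(\prod\ldots)$ and Ford's recursive bound on $L$; the lower bound via second-moment counting of pairs of divisors in $(y,2y]$ restricted to $a$ satisfying the barrier.

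Next comes the saddle-point analysis in $k$. The summand $\frac{(\density\log\log y)^k}{k!}2^k/\log y$ peaks at $k=2\density\log\log y$. The cap $\min(1,\cdot)$ switches at $k=\log\log y/\log 2$. If $2\density\le1/\log 2$, i.e.\ $\density\le1/\log4$, the peak lies in the region where the weight is $2^k/\log y$. That contribution is $(\log y)^{2\density-1}$, and combined with $(\log y)^{-\density}$ it gives $(\log y)^{-(1-\density)}$. The barrier probability is then $\asymp$ the ballot-type quantity $\frac{1}{\log 4}-\density$, saturating at $(\log\log y)^{-1/2}$ near criticality, which is $E$. If $\density>1/\log4$, the sum is dominated near $k=k'$. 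Stirling gives the exponent $\density-\frac{1+\log(\density\log2)}{\log2}$, the Gaussian width contributes $(\log\log y)^{-1/2}$, and the barrier probability at distance $m=k'-k$ contributes $\asymp \frac{(1+m)}{k}$. Summing the resulting geometric tail $\sum_m (1+m)\,r^m/k$ with ratio $r=1/(2\density\log 2)$ yields $\frac{1}{\log\log y\,(1-r)^2}$, capped by the Gaussian width. This is precisely $E(y;\density)$, since $1-r\asymp \density-\frac1{\log4}$.

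The main obstacle is the barrier estimate for order statistics: we need the probability that $k$ uniform order statistics stay above a line of slope slightly off $1/k$, uniformly as the drift tends to $0$ at scale $(\log\log y)^{-1/2}$. It must have the correct linear dependence on the starting offset and on the endpoint gap, with both upper and lower bounds. We would prove this via the Ballot/Takács theorem for uniform order statistics, using exact formulas for crossing a line. Perturbations of the barrier of bounded size would be handled by a comparison argument, giving $\P\asymp (v+1)(w+1)/k$ for offsets $v,w$. Care is also needed to show that the $\density$-density fluctuations of $Q$ (the $\kappa$ error term) only perturb the $\xi_j$ by $O(1/\log\log y)$, which the barrier estimate must tolerate.
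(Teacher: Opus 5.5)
Your skeleton is the paper's: split by $k=\omega(a)$ with weight $(2\delta\log\log y)^k/k!$, use a ballot-type barrier factor $\asymp (v-k+1)/v$, then do a saddle-point analysis of the resulting Poisson-type sum. Your heuristic evaluation of that sum agrees with \Cref{prop 1.5}. But the lower bound is missing its key ingredient.

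A second moment over $a$ obeying the barrier (equivalently, the Cauchy--Schwarz step of \Cref{cauchy schwarz L}) only works if the off-diagonal pairs of divisors are controlled. By \Cref{LB W}, the ratio of second to first moment is $1+\sum_i 2^{i-v\xi_i}$. Your barrier $v\xi_i\ge i-O(1)$ bounds this only by $O(k)$ pointwise, since it permits $v\xi_i-i=O(1)$ for most $i$. So you lose up to a factor $\log\log y$ unless you show that configurations where this sum is large carry a small proportion of the volume. Comparing barriers shifted by bounded amounts, or the shifted-barrier estimate $(u+1)(w+1)/k$, does not give this.

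What is needed is that order statistics conditioned on the weak barrier stay away from it, by a margin growing with $\min\{i,k-i\}$, off a set of small relative measure. This is \Cref{lemma better smirnov}: the strong barrier $v\xi_i\ge i+\min\{i,k-i\}^{1/6-\epsilon}-C$ costs only a factor $1+O(C^{-\epsilon})$. Its proof is a union bound over $j$: split at $\xi_j$ and apply \Cref{order stats lem}(ii) to the points on each side. This gives conditional density $\ll (y+2)^2k^{3/2}j^{-3/2}(k-j+1)^{-3/2}$ for $v\xi_j-j=y$. On the strong-barrier region, $\sum_i 2^{i-v\xi_i}=O_C(1)$ pointwise (\Cref{lem from L to Yk}). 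By Daniels' formula that region still has volume $\gg (v-k+1)/(v\,k!)$ (\Cref{lem LB Yk}). Ford instead restricts to $\sum_i 2^{i-v\xi_i}\le R$ and compares volumes. Either way, some local estimate near the barrier must be stated and proved.

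Second, your reduction to $\sum 1/a$ over $2y$-smooth $a\in\mathcal{S}_Q$ with $\tau(a,y,2y)\ge 1$ is not uniform up to $y=\sqrt{x}$. Counting the cofactor $b$ with $P^-(b)>2y$ via \Cref{count s P-} needs $x/a$ well above $y$, whereas $a>y$ is forced. At $y=\sqrt{x}$, every $n\le x$ with a divisor in $(y,2y]$ is itself $2y$-smooth, so $b=1$ and the decomposition is a tautology. In the lower bound nothing is gained. In the upper bound, each $a\in(x/2y,x]$ contributes $1$ and you are back to the original problem. The formula you write is true a posteriori, but it cannot be proved this way near $\sqrt{x}$.

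This is why the paper, following Ford, works with $L(a)$ rather than the indicator of $\tau(a,y,2y)\ge1$:
\begin{itemize}
\item For the lower bound it counts $n=apb$ with $a\le y^{1/8}$, $p$ a prime with $\log(y/p)\in\mathscr{L}(a)$, and $b$ allowed prime factors in $(y^{1/4},y^{3/4}]$ (\Cref{LB initial reduction}).
\item For the upper bound it writes $n=m_1m_2$ with $m_1,m_2$ in the two complementary ranges, and takes $p$ to be the smaller of $P^+(m_1),P^+(m_2)$ (\Cref{lem UB 1}).
\end{itemize}
After that reduction, your heuristic \emph{probability $\min(1,2^k/\log y)$ times a barrier factor} becomes $L(a)/\log y$. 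It is bounded above through \Cref{lemma L properties}(iii) and $U_k(v)$, and below through the Cauchy--Schwarz step described above.
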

		One can verify that the function $G(\delta)$ is differentiable at $\delta = 1/\log 4$, and so there is a continuity in the power of $\log y$ in \Cref{mainthm} as we vary $\density$. Furthermore, the function $E(y;\density)$ gives us the explicit transition behavior of $H_Q(x,y,2y)$ about the point $\density = 1/\log 4$; $E(y;\density)$ varies in a continuous way as $\density$ transitions from the left of $1/\log 4$ to the right.
		
	Using \Cref{mainthm}, we can obtain an estimate on the number of distinct integers in the multiplication table restricted to integers in $\S_Q \cap [1,N]$. Define
	$$A_Q(N) := \#\{ab: a,b \in \S_Q\cap[1,N]\}.$$
	We have the relations\footnote{See Theorem 23 of \cite{hall1988divisors} or the proof of Corollary 3 of \cite{ford2008distribution}.}
	\begin{equation}
		H_Q\left(\frac{N^2}{4},\frac{N}{4},\frac{N}{2} \right) \leq A_Q(N)  \leq \sum_{k \geq 0} H_Q\left(\frac{N^2}{2^k},\frac{N}{2^{k+1}},\frac{N}{2^k} \right),
	\end{equation}
	which immediately lead to the following corollary.
	\begin{cor} \label{maincor} Let $c,\kappa,y_0$, $G(\density),E(y;\density)$ be as in \Cref{mainthm}. Suppose $N > y_0$. Then uniformly over all sets of primes $Q$ satisfying \eqref{definition of delta} with $\delta \ge c$, one has
		\begin{equation}
		A_Q(N) \asymp_{c,\kappa} \frac{N^2}{(\log N)^{G(\density) - \density +1}}E(N;\density).
	\end{equation}
	\end{cor}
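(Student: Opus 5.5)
The plan is to derive the corollary directly from \Cref{mainthm} via the sandwich inequality for $A_Q(N)$ stated just above. First note that when $x \asymp N^2$ and $y \asymp N$ (up to bounded factors), the main term of \Cref{mainthm} becomes
$$\frac{x}{(\log x)^{1-\density}(\log y)^{G(\density)}}E(y;\density) \asymp \frac{N^2}{(\log N)^{1-\density+G(\density)}}E(N;\density).$$
Two simple facts give this. First, $\log x \asymp \log y \asymp \log N$, so $(\log x)^{1-\density}(\log y)^{G(\density)} \asymp (\log N)^{G(\density)-\density+1}$; the exponents lie in a bounded range because $\density\in[c,1]$. Second, $\log\log y = \log\log N + O(1)$, and $E(\cdot;\density)$ is a product and ratio of expressions of the form $\max\{a,1/\sqrt{\log\log y}\}$ and $\max\{b,1/\log\log y\}$. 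Hence $E(y;\density)\asymp E(N;\density)$ uniformly in $\density$.

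For the lower bound, apply \Cref{mainthm} with $x=N^2/4$ and $y=N/4$. Since $z=N/2=2y$ and $y\le\sqrt{x}=N/2$, the theorem applies provided $N/4\ge y_0$; enlarging $y_0$ by a constant factor handles this. For bounded $N$ (between $y_0$ and $4y_0$), both sides of the claimed relation are $\asymp_{c,\kappa}1$, since trivially $A_Q(N)\ge 1$ and $A_Q(N)\le N^2$.

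For the upper bound, treat the sum over $k$ term by term. The $k$-th term is $H_Q(N^2/2^k, y_k, 2y_k)$ with $y_k=N/2^{k+1}$, and $y_k\le\sqrt{N^2/2^k}$ holds automatically.

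- \emph{Range $2^k\le N^{1/2}$ (roughly), with $y_k\ge y_0$.} Here \Cref{mainthm} applies. We have $\log y_k\asymp\log N$ and $\log x_k\asymp \log N$, so the $k$-th term is $\ll 2^{-k}$ times the target quantity. Summing gives the geometric factor $\sum 2^{-k}=O(1)$.
- \emph{Remaining range $2^k>N^{1/2}$.} Use the trivial bound $H_Q(x_k,\cdot,\cdot)\le x_k=N^2/2^k$. Summed, this gives $\ll N^{3/2}$, which is negligible against $N^2(\log N)^{-O(1)}$ because $E(N;\density)\gg(\log\log N)^{-3/2}$.

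The only real care needed is to check uniformity in $\density$, i.e.\ that the comparison $E(y;\density)\asymp E(N;\density)$ holds with constants independent of $\density$ in the transition window near $1/\log 4$. This follows since $\max\{t,u\}\asymp\max\{t,u'\}$ whenever $u\asymp u'$.
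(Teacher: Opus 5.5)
Your proposal is correct and follows the paper's route exactly. The paper simply observes that the sandwich $H_Q(N^2/4,N/4,N/2)\le A_Q(N)\le\sum_{k\ge 0}H_Q(N^2/2^k,N/2^{k+1},N/2^k)$, combined with \Cref{mainthm}, immediately yields the corollary; your geometric-sum treatment of $2^k\le N^{1/2}$, the trivial $O(N^{3/2})$ tail, and the uniformity check for $E(\cdot;\density)$ supply the details it omits. The one loose end is the terms with $2^k\le N^{1/2}$ but $N/2^{k+1}<y_0$; these only arise when $N<4y_0^2$, a bounded range already covered by your trivial argument that both sides are $\asymp_{c,\kappa}1$.
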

	\Cref{maincor} has the following consequence. Let $\S_Q(N) = \S_Q \cap [1,N]$ and note that $A_Q(N) = |\S_Q(N) \cdot \S_Q(N)|$, where $X \cdot Y$ denotes the set of multiples $xy$ with $x \in X, y\in Y$. We obviously have that $|\S_Q(N) \cdot \S_Q(N)| \le |\S_Q(N)|^2$. By \Cref{count s P-} and \Cref{maincor}, it follows that for any fixed set of primes $Q$ (and fixed $\delta$) satisfying \eqref{definition of delta} and $N \to \infty$ we have
	\begin{equation}\label{comb inter}
		 \begin{cases}
		|\S_Q(N) \cdot \S_Q(N)| \asymp	|\S_Q(N)|^2 \quad &\text{if } \density< 1/\log 4;\\|\S_Q(N) \cdot \S_Q(N)| = o(|\S_Q(N)|^2) \quad &\text{if } \density \geq 1/\log 4,
		\end{cases}
	\end{equation}
	and in fact we have the correct order of magnitude of $|\S_Q(N) \cdot \S_Q(N)|$ uniformly for all $\delta \in (0,1]$. This combinatorial interpretation of our theorem has clear connections to two recent papers: one of Soundararajan and Xu \cite{soundararajan2023central}, and one of Ford \cite{ford2018extremal}. We explain these connections presently.
	
	Corollary 1.3 of \cite{soundararajan2023central} states that when one takes $\mathcal{A}$ to be the set of all sums of two squares in the short interval $[x,x+y]$ with $x^{1/3}< y = o(x)$, then as $f$ varies over Steinhaus random multiplicative functions, the quantity
	$$\frac{1}{\sqrt{|\mathcal{A}|}}\sum_{n \in \mathcal{A}}f(n)$$
	is distributed like a standard complex normal random variable. At the end of Section 5 in their paper, it is remarked that the result will also hold true if one replaces the set $\mathcal{
	A}$ with (in the notation of the present paper) $\S_Q \cap [x,x+y]$, so long as $\delta < 1/\log 4 -\epsilon$ and $y$ is not too small. The method they use to prove this theorem is to show that $A$ contains large a subset $A'$ such that almost all products $ab$ with $a,b\in A'$ and $a<b$ are distinct. This method is related to what we will use to prove the lower bound in \Cref{mainthm} when $\delta<1/\log 4$.
	
	Theorem 1 of \cite{ford2018extremal} states that for any $D>7/2$ there is a set $A \subset [1, N] \cap \mathbb{Z}$ of size
	$$|A| \geq \frac{N}{(\log N)^{\mtc/2}(\log \log N)^D}$$
	for which $|A \cdot A| \sim |A|^2/2$ as $N \to \infty$. Of relevance to the present paper is the proof method of Ford. He starts out with a set 
	$$B:= \bigg\{m \in (N/2,N]: \mu^2(m) = 1, \omega(m) = k, \omega(m,t) \leq \frac{\log \log t}{\log 4} +2\;(3 \leq t \leq N)\bigg\}.$$
	After some intermediary steps, he is able to select a random subset $A$ of $B$ which satisfies the desired properties. In this way, the starting point of his proof is to work with a set of integers satisfying the condition $\omega(n,t) \leq \frac{\log \log t}{\log 4}$, which is similar to our condition $\delta \leq \frac{1}{\log 4}$. As such, Ford recognized that the set of extremal size one could construct which still satisfies a condition like $|A \cdot A| \sim |A|^2/2$ would be one resembling $\S_Q \cap [N/2,N]$ with $\density \leq \frac{1}{\log 4}$. Our \Cref{comb inter} supports this idea.
	
	\begin{remark}
		It is possible to relax the condition \eqref{definition of delta}. For example, we can fix some $\epsilon > 0$ and assume that the error term in \eqref{definition of delta} is $ \kappa x/(\log x)^{1+\epsilon}$. One could also replace the condition \eqref{definition of delta} with $\sum_{p \in Q \cap [1,x]}1/p = \delta \log\log x +M(Q) + O(1/(\log x))$ and add the assumption that we only work over families of $Q$ where $M(Q)$ is uniformly bounded. For simplicity's sake, we will instead work with condition \eqref{definition of delta}.
	\end{remark}
	
	\subsection*{Outline of the proof of \Cref{mainthm}} 
	We generally follow the methods of \cite{ford2008distribution}. Here, we give an outline of the proof, with the intent of highlighting that which is novel in our approach. 
	
	The proof is divided into \Cref{prop 1.4} and \Cref{prop 1.5}. The first proposition reduces the problem to the estimation of a Poisson-type sum, and the second proposition gives the appropriate estimates on said sum. 
	\begin{proposition}\label{prop 1.4}
		Let $c,\kappa, y_0$ be as in \Cref{mainthm}. Suppose $y_0 < y \leq \sqrt{x}$. One has, uniformly over all sets of primes $Q$ satisfying \eqref{definition of delta} and $\delta \ge c$, that
		\begin{equation}\label{main lemma}
			H_Q(x,y,2y) \asymp_{c,\kappa} \frac{x}{(\log x)^{1-\density}(\log y)^{1+\density}} \sum_{1 \leq k \leq v}\frac{\lambda^k}{k!}\cdot\frac{(v-k+1)}{v},
		\end{equation}
		where $\lambda = 2 \density \log \log y,$ and $v =  \lfloor\frac{1}{\log 2}\log \log y\rfloor.$
	\end{proposition}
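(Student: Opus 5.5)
We follow the strategy of Ford's Proposition reducing $H(x,y,2y)$ to the quantity $L(a)$, the Lebesgue measure of $\mathscr{L}(a)=\bigcup_{d\mid a}[\log(d/2),\log d)$, adapted to $\S_Q$. Factor each $n\in\S_Q\cap[1,x]$ with a divisor in $(y,2y]$ as $n=ab$, where $a$ has all prime factors $\le y$ (or below some threshold $z$ slightly smaller than $2y$) and $b$ has all prime factors larger. The first step is the ``local-to-global'' reduction
\begin{equation*}
H_Q(x,y,2y)\asymp \frac{x}{(\log x)^{1-\density}}\cdot\frac{1}{(\log y)^{\density}}\sum_{\substack{a\in\S_Q,\ P^+(a)\le y\\ \mu^2(a)=1}}\frac{L(a)}{a},
\end{equation*}
up to admissible modifications. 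For the upper bound, observe that $n$ has a divisor in $(y,2y]$ only if $a$ does (up to the large-prime part, which contributes at most one prime). We then sum over $a$ and count $b\le x/a$ in $\S_Q$ free of primes $\le y$ using a fundamental-lemma/Brun sieve. Here \eqref{definition of delta} gives, via Mertens' product for $Q$,
\begin{equation*}
\#\{b\le X: b\in\S_Q,\ p\mid b\Rightarrow p>y\}\ll \frac{X}{\log X}\Big(\frac{\log X}{\log y}\Big)^{\density}.
\end{equation*}
For the lower bound, restrict to $n=apb$ with $p\in Q$ a prime in an interval $e^{-u}y/d<p\le 2e^{-u}y/d$, which is covered by the set $\mathscr L(a)$. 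Summing over $p$ using \eqref{definition of delta} again produces the factor $L(a)$ as a measure. The uniformity in $\density\ge c$ comes from all constants in these sieve bounds depending only on $c,\kappa$.

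The second step evaluates $\sum_a L(a)/a$ by splitting according to $k=\omega(a)$. Write $a$ with prime factors $p_1<\dots<p_k$ and set $\xi_i=\log\log p_i/\log\log y$. By Ford's method, $L(a)$ is comparable to $2^k$ times a ``no-repeated-subset-sum'' factor. In the upper bound we use $L(a)\le\min(2^k,\log a)$ together with Ford's lemma bounding $L(a)$ by sums over $\prod_{j}2^{j}\,p_j^{-\dots}$. This reduces matters to a uniform order statistics problem: the probability that the $k$ points $\xi_i$ satisfy the barrier $\xi_i\ge (i-\text{const})/v$-type constraints, namely $\sum_{i\le j}\log p_i\gtrsim$ condition $2^j\le \log p_{j+1}$. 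The sum over squarefree $a\in\S_Q$ with $\omega(a)=k$ of $1/a$ restricted to such order conditions is $\frac{(\density\log\log y)^k}{k!}\cdot$ (probability). Multiplying by $2^k$ gives $\lambda^k/k!$. The ballot-type probability that uniform order statistics stay above the line is $\asymp (v-k+1)/v$ when $k\le v$, and the terms with $k>v$ are dominated since $L(a)\le\log y$ replaces $2^k$ by $2^v\asymp\log y$. The factor $1/(\log y)$ in \eqref{main lemma} comes from normalizing $L(a)$ and the Mertens factor $\prod_{p\le y,p\in Q}(1+1/p)\asymp(\log y)^{\density}$ combined with the previous prefactor.

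The main obstacle is the lower bound $L(a)\gg 2^k$ for a positive proportion of $a$ in the order-statistic region. For this we would follow Ford's second-moment argument: bound $L(a)\ge 2^{2k}/W(a)$ with $W(a)=\#\{(d,d'):|\log d/d'|<\log 2\}$. We then show, on average over $a$ with the barrier condition, that $W(a)\ll 2^k$. The difficulty is doing this with prime densities $\density$ rather than $1$, uniformly in $\density\in[c,1]$, and verifying that the ballot probability $(v-k+1)/v$ is the correct lower-bound scale. This is where the new uniform order-statistics estimates with barriers, stated later in the paper, would be invoked.
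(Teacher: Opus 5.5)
Your lower-bound outline (Cauchy--Schwarz against $W(a)$, a barrier-restricted family on which $W(a)$ is $O(2^k)$ on average, then a barrier estimate for uniform order statistics) matches the paper's route. The upper-bound half, however, has genuine gaps.

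First, your displayed reduction is mis-normalized: the prefactor must be $x(\log x)^{\delta-1}(\log y)^{-1-\delta}$. Since $L(a)\ge\log 2$ and $\sum 1/a\asymp(\log y)^{\delta}$ over your range, your right-hand side is $\gg x(\log x)^{\delta-1}$, which for $\delta=1$ contradicts \Cref{fordthm}. The missing $1/\log y$ is not a Mertens factor. It is the density of a distinguished prime $p$ with $\log(y/p)\in\mathscr{L}(a)$; for $a\le y^{1/8}$ one has $\sum_{p\in Q,\,\log(y/p)\in\mathscr{L}(a)}1/p\asymp\delta L(a)/\log y$.

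Your upper-bound mechanism (take $a$ to be the $y$-smooth part of $n$ and sieve for $b$) never produces such a prime, and so never produces $L(a)$. It yields an unweighted sum of $1/a$ over $a$ having a divisor in $(y,2y]$. Moreover, for $y$ near $\sqrt{x}$ many $n$ are themselves $y$-smooth ($b=1$), so the sieve saves nothing. The missing idea is Ford's. After reducing to squarefree $n$, write $n=m_1m_2$ with $m_1\in(y_1,z_1]$. Let $p$ be the largest prime factor of whichever $m_j$ has the smaller largest prime factor, and write $n=apb$ with $P^+(a)<p<P^-(b)$. Then $\log(y_j/p)\in\mathscr{L}(a)$ and $b>p$ can be sieved. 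Summing over $p$ gives the weight $L(a)/\big(a\log(y^{3/20}/a+P^+(a))^{1+\delta}\big)$, which \Cref{dimitris lemma 2.2} turns into $(\log y)^{-1-\delta}\sum_a L(a)/a$.

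Second, your treatment of $k>v$ loses a factor $\log\log y$, exactly in the regime $\delta>1/\log 4$. Replacing $2^k$ by $2^v\asymp\log y$, as you propose, gives only
\[
\sum_{k>v}\ \sum_{\omega(a)=k}\frac{L(a)}{a}\ll \log y\sum_{k>v}\frac{(\delta\log_2 y+O(\kappa))^k}{k!}\asymp_\kappa\frac{\lambda^v}{v!}.
\]
By \Cref{prop 1.5}, the target is $\asymp\lambda^v/(v\cdot v!)$ for fixed $\delta>1/\log 4$. At $\delta=1$ you would obtain $(\log\log y)^{1/2}$ rather than Ford's $(\log\log y)^{3/2}$.

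The barrier saving must therefore be kept for $k$ above $v$ as well. From $L(p_1\cdots p_k)\le\min_j 2^{k-j}\log(2p_1\cdots p_j)$ and the intervals $D_j$ one reaches $U_k(v)$. For $v<k\le 10v$ one then needs Ford's estimate $U_k(v)\ll\big((k-v)^2+1\big)/\big((k+1)!\,2^{k-v}\big)$; only for $k>10v$ do $L(a)\le 2^k$ and a Poisson tail suffice.

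Similarly, for $k\le v$, $L(a)$ is a soft barrier rather than the indicator of one, so a single ballot probability does not bound it. The paper writes $U_k(v)\le\sum_{m\ge0}2^{1-m}\Vol(\mathcal{T}(k,v,m+1))$ and uses Koukoulopoulos's bound $\Vol(\mathcal{T}(k,v,\gamma))\ll(\gamma+2)(v-k+\gamma+1)/(k+1)!$.
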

	
	\begin{proposition}\label{prop 1.5}
		Let $c,y_0$ be as in $\Cref{mainthm}$, and let $\lambda$ and $v$ be as in \Cref{prop 1.4}. For $y > y_0$, we have uniformly for $\density \ge c$ that
		\begin{equation}
			\sum_{1 \leq k \leq v} \frac{\lambda^k}{k!}\cdot\frac{v-k+1}{v} \asymp_c \begin{cases}
				e^\lambda \max\Big\{\frac{1}{\log 4} - \density, \frac{1}{\sqrt{\log \log y}}\Big\} & \text{if } \density \leq \frac{1}{\log 4}, \\
				\noalign{\vskip 5pt}
				\frac{\lambda^v}{v!} \frac{1}{\max\Big\{\big(\density - \frac{1}{\log 4}\big)^2 \log \log y, 1\Big\}} & \text{if } \density > \frac{1}{\log 4}.
			\end{cases}
		\end{equation}
	\end{proposition}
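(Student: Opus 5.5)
The plan is to read the sum probabilistically when $\lambda<v$, and to expand it around its top term when $\lambda>v$. Throughout, write $a_k=\lambda^k/k!$ and $S=\sum_{1\le k\le v}a_k\,(v-k+1)/v$. Since $v=\lfloor \log\log y/\log 2\rfloor$ and $\lambda=2\density\log\log y$, we have
$$\frac{\lambda}{v}=\density\log 4+O\Big(\frac{1}{\log\log y}\Big).$$
Also $v-\lambda\asymp (\tfrac{1}{\log 4}-\density)\log\log y+O(1)$, with the sign governed by $\density$ versus $1/\log 4$. Because $\density\ge c$, we have $\lambda\asymp_c v\asymp \log\log y$, which is large once $y_0$ is large.

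\emph{Case $\density\le 1/\log 4$.} Let $X$ be a Poisson random variable with mean $\lambda$. Then
$$S=\frac{e^\lambda}{v}\,\mathbb{E}\big[(v+1-X)^+\mathbb{1}_{X\ge1}\big],$$
and the restriction $X\ge1$ only costs $e^{-\lambda}(v+1)$, which is negligible. So it suffices to show
$$\mathbb{E}(v+1-X)^+\asymp \max\{v+1-\lambda,\sqrt{\lambda}\}$$
uniformly for $v+1-\lambda\ge -O(1)$.
\begin{itemize}
\item For the upper bound, use $(v+1-X)^+\le |v+1-\lambda|+|X-\lambda|$ together with Cauchy--Schwarz, $\mathbb{E}|X-\lambda|\le\sqrt\lambda$.
\item For the lower bound, use two uniform Poisson facts for large $\lambda$, both available from the normal approximation (Berry--Esseen): $\P(X\le\lambda)\ge 1/3$ and $\P(X\le \lambda-\sqrt\lambda)\ge c_0>0$. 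If $v+1-\lambda\ge\sqrt\lambda$, the first gives $\mathbb{E}(v+1-X)^+\ge (v+1-\lambda)/3$. Otherwise, the second gives $\mathbb{E}(v+1-X)^+\ge c_0\sqrt\lambda$ (the contribution is at least $\sqrt\lambda+O(1)$ on that event).
\end{itemize}
Dividing by $v\asymp\log\log y$ then yields $e^\lambda\max\{\tfrac1{\log4}-\density,(\log\log y)^{-1/2}\}$. Here $O(1)$ shifts in $v-\lambda$ are absorbed by the $\sqrt\lambda$ term.

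\emph{Case $\density>1/\log 4$.} Substitute $k=v-j$ with $0\le j\le v-1$. Then
$$S=\frac{a_v}{v}\sum_{j=0}^{v-1}(j+1)\prod_{i=0}^{j-1}\frac{v-i}{\lambda}.$$
Set $r=v/\lambda$, so that $1-r\asymp \density-\frac1{\log4}+O(1/\log\log y)$; here we use $\density\le1$ to keep constants uniform. Writing $\frac{v-i}{\lambda}=r(1-i/v)$ gives the two-sided bound
$$r^j e^{-c_1 j^2/v}\le \prod_{i<j}\frac{v-i}{\lambda}\le r^je^{-j(j-1)/(2v)},$$
where the lower bound holds for $j\le v/2$.
\begin{itemize}
\item \emph{Upper bound.} Estimate $\sum_j (j+1)r^j e^{-j(j-1)/2v}$ by $\ll \min\{(1-r)^{-2},v\}$: the first bound comes from dropping the Gaussian factor, the second from dropping $r^j$ (assuming $r\le1$; when $r>1$ by $O(1/v)$, the factor $r^j\le e^{O(j/v)}$ is harmless).
\item \emph{Lower bound.} Restrict to $j\le J:=\min\{(1-r)^{-1},\sqrt v\}$. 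On that range $r^j\gg1$ and $e^{-c_1j^2/v}\gg1$, so the sum is $\gg J^2$.
\end{itemize}
Hence $S\asymp \frac{a_v}{v}\min\{(1-r)^{-2},v\}=a_v/\max\{v(1-r)^2,1\}$, which is the claimed expression. For $j$ close to $v$, the terms are dominated by the Gaussian factor and contribute nothing extra.

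The main obstacle is keeping every constant uniform in $\density$ across the transition point $1/\log4$, where $v-\lambda$ or $1-r$ may be of size $O(1/\log\log y)$ or even slightly negative because of the floor in $v$. I would handle this by always comparing with the $\max/\min$ expressions above, which absorb such $O(1)$ shifts, and by invoking only Poisson estimates that are uniform for large $\lambda$.
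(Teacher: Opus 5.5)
Your argument is correct, and it takes a genuinely different and more unified route than the paper. The paper starts from the identity \eqref{poisson key identity}, which splits the sum into $\frac{1-\theta}{v}\sum_{k\le v}\lambda^k/k!$ plus the top term $\lambda^{v+1}/(v\cdot v!)$. It then inserts the saddle-point asymptotic of \Cref{lem: saddlepoint poisson} and treats five regimes of $\theta=\lambda-v$ separately. In the window $\sqrt\lambda\le\theta=o(\lambda)$ it uses a halving-step decomposition with $h_k\asymp 1/\log(\lambda/k)$.

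Your expectation is the same identity in disguise, since $\mathbb{E}(v+1-X)^+=(v+1-\lambda)\P(X\le v)+\lambda\P(X=v)$. Because you bound the positive part directly, you never meet the possible cancellation between a negative first term and the top term. That cancellation is what forces the paper, for $\theta>1/2$ in regime (iii), to borrow its lower bound from regime (v). Similarly, your two-sided estimate $r^je^{-c_1j^2/v}\le\prod_{i<j}(v-i)/\lambda\le r^je^{-j(j-1)/(2v)}$ replaces the halving argument. It handles regimes (ii), (v) and the $\density>1/\log 4$ part of (iii) in one stroke.

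Your approach buys brevity, and it needs only two pointwise normal-approximation facts rather than a saddle-point expansion. Its constants are also visibly uniform in $\density\in[c,1]$. By contrast, the paper's regimes (i) and (ii) have $\epsilon$-dependent constants, while (iv) and (v) are stated under $\theta=o(\lambda)$. Deducing the uniform statement from them therefore requires a patching step that the paper leaves implicit. In exchange, the paper's decomposition shows term by term which contribution dominates in each regime: the Poisson bulk $\sum_{k\le v}\lambda^k/k!$ or the top term $\lambda^v/v!$.

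One small remark: when $\density>1/\log 4$ you have $v\le \log\log y/\log 2<\lambda$. So $r<1$ always, and your caveat about $r$ exceeding $1$ is not needed.
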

	The proof of \Cref{prop 1.4} is outlined at the end of this section and proven in \Cref{sec lower bounds,sec upper bounds,sec lower bound proofs,sec upper bound proofs}. The proof of \Cref{prop 1.5} is given in \Cref{sec trans range}.
	
	\begin{proof}[Proof of \Cref{mainthm}]
		Write $\lambda = 2\density \log \log y$. By \Cref{prop 1.4} and \Cref{prop 1.5}, we have
		\begin{equation} \label{proof main thm eq 1}
			H_Q(x,y,2y) \asymp_{c,\kappa} \frac{x}{(\log x)^{1-\density}(\log y)^{1+\density}} \begin{cases}
				e^\lambda \max\big\{\frac{1}{\log 4} - \density, \frac{1}{\sqrt{\log \log y}}\big\} & \text{if } \density \leq \frac{1}{\log 4}, \\
				\noalign{\vskip 5pt}
				\frac{\lambda^v}{v!} \frac{1}{\max\big\{\big(\density - \frac{1}{\log 4}\big)^2 \log \log y, 1\big\}} & \text{if } \density > \frac{1}{\log 4}.
			\end{cases}
		\end{equation}
		For $\density \leq \frac{1}{\log 4}$, we substitute $e^\lambda = (\log y)^{2\density}$. The exponent of $\log y$ thus becomes $1+\density-2\density = 1-\density$, which directly matches $G(\density)$. 
		
		For $\density > \frac{1}{\log 4}$, an application of Stirling's formula yields
		$$ \frac{\lambda^v}{v!} \asymp \frac{1}{\sqrt{v}} \bigg(\frac{e\lambda}{v}\bigg)^v \asymp_c \frac{(\log y)^{1+\density - G(\density)}}{\sqrt{\log \log y}}.$$
		Substituting this into \eqref{proof main thm eq 1} directly recovers the $E(y;\density)$ term, completing the proof of the theorem.
	\end{proof}
	We finish this section with an outline of the proof of \Cref{prop 1.4}. The rigorous proof of this proposition is the content of \Cref{sec lower bounds,sec upper bounds,sec lower bound proofs,sec upper bound proofs}.
	\subsection*{Outline of the proof of \Cref{prop 1.4}}
	In this outline, all implicit constants may depend on $c,\kappa$, even if not indicated.
		\subsubsection*{Lower Bounds.} The first step is to relate $H_Q(x,y,2y)$ to a certain sum involving the function $L(a)$ which is defined in \eqref{def L}. The function $L(a)$ gives a quantitative hold on the propinquity (spacing)\footnote{See articles \cite{erdHos1979propinquity}, \cite{maier1984set} and \cite{raouj2011mesures} for an overview of the fascinating phenomenon of divisor propinquity.} of the divisors of $a$, and is frequently seen in the literature on divisors. We will show in \Cref{LB initial reduction} that
		\begin{equation}\label{outline eq 1}
			H_Q(x,y,2y) \gg {f}_\density(x,y)\sum_{a \in \mathscr{A}}\frac{L(a)}{a},
		\end{equation}
		for a certain set $\mathscr{A} \subseteq S_Q \cap \{a \leq y : \mu^2(a) = 1\}$, and a certain elementary function $f_\density$. For each $k \in \N$ we choose a certain subset $\mathscr{A}_k \subseteq \mathscr{A}$, where integers in $\mathscr{A}_k$ have exactly $k$ prime factors. The explicit construction of such sets $\mathscr{A}_k$ is discussed in \Cref{sec lower bounds}. The advantage of working with the carefully chosen sets $\mathscr{A}_k$ is that it will reduce the problem of bounding the sum of $L(a)/a$ from below into a strictly geometric problem. Indeed, \Cref{lem from L to Yk} implies that 
		$$\sum_{a \in \mathscr{A}_k}\frac{L(a)}{a} \gg (2 v \delta \log 2)^k \cdot 
		\Vol\big(\mathcal{Y}_k(v,C)\big),$$
		where $$v = \Big\lfloor\frac{ \log \log y}{\log 2}\Big\rfloor,$$ $C$ is a sufficiently large absolute constant, and $\mathcal{Y}_k(v,C)$ is a geometric sub-region of the $k$-dimensional unit hypercube which is defined in \Cref{lem from L to Yk}. 
		
		The final step is computing a lower bound on the volume of this region. In \Cref{lem LB Yk}, we show that
		$$ \Vol\big(\mathcal{Y}_k(v,C)\big) \gg \frac{v-k+1}{v \cdot k!}\quad (1 \leq k \leq v). $$ 
		Our proof of this lower bound introduces a new perspective by exploiting a phenomenon well-known in the probability literature: that random walks conditioned to satisfy a weak barrier constraint will, with high probability, respect a much stronger barrier condition (see e.g. \cite{arguin2017maxima} for an example of this phenomenon being exploited in the context of number theory, or \cite{bramson1978maximal} for a classical and purely probabilistic example). While this principle was implicitly present in the arguments of Ford \cite{ford2008distribution} and Koukoulopoulos \cite{koukoulopoulos2010generalized}, we make this geometric intuition explicit. By defining the region $\mathcal{Y}_k(v,C)$ to obey \textit{strong barrier conditions}, we illuminate the geometric mechanism underlying the lower bound. See the remark following \Cref{lem from L to Yk} for more details on our conceptual improvement.

		With a lower bound for $\Vol(\mathcal{Y}_k(v,C))$ in hand, we simply sum over all $k \leq v$ and arrive at a lower bound for $H_Q(x,y,2y)$. To summarize the lower bound proof schematically, we show:
		\begin{align*}
			\frac{H_Q(x,y,2y)}{f_\density(x,y)} &\gg \sum_{\substack{a \in \mathscr{A}_k \\ k \geq 1}}\frac{L(a)}{a} && \text{Lemmas 3.1-3.4}\\ &\gg \sum_{k \leq v} 2^{-C} (2 v \delta \log 2)^k \Vol\big(\mathcal{Y}_k(v,C)\big) && \text{Lemma 3.5}\\ &\gg \sum_{k \leq v}\frac{\lambda^k}{k!}\cdot\frac{v-k+1}{v},&& \text{Lemma 3.6}
		\end{align*}
		where $\lambda = 2 \density \log \log y$ and $v = \frac{1}{\log 2}\log \log y$.
		
		\subsubsection*{Upper Bounds.}The upper bound proof begins by applying properties of the function \(L\) to relate \(H_Q(x,y,2y)\) to a sum involving \(L(a)\). \Cref{lemma L properties,lem UB 1,lem UB 3} imply that 
		$$H_Q(x,y,2y) \ll {f}_\density(x,y)\sum_{a \in \mathscr{B}}\frac{L(a)}{a},$$
		for a certain set $\mathscr{B} \subseteq S_Q$, and the very same $f_\delta$ as in \eqref{outline eq 1}. We then partition $\mathscr{B} = \cup_k \mathscr{B}_k,$ where $\mathscr{B}_k = \{a \in \mathscr{B}: \omega(a) = k\}$. \Cref{lem UB 4} provides a bound of the form
		$$\sum_{a \in \mathscr{B}_k}\frac{L(a)}{a} \ll (2 \delta \log \log y)^k U_k(v),$$
		where $U_k(v)$ is a multivariate volume integral defined in \eqref{def Uk}. Uncoincidentally, the same integral $U_k(v)$ appears in the past works of Ford \cite{ford2006integers} and Koukoulopoulos \cite{koukoulopoulos2010generalized}. An appropriate upper bound on $U_k(v)$ for each $k$ follows immediately from their work, as is noted in \Cref{lem UB 5}. It will follow that the dominating contribution to our upper bound comes from $k \leq v$, and that for such $k$ we have
		$$U_k(v) \ll \frac{v-k+1}{v \cdot k!}.$$
		Summing over all such $k$ will yield an upper bound which matches the lower bound. To summarize the upper bound proof schematically, we show that
		\begin{align*}
			\frac{H_Q(x,y,2y)}{f_\density (x,y)} &\ll \sum_{\substack{a \in \mathscr{B}_k \\ k \geq 1}} \frac{L(a)}{a} &&\text{Lemmas 4.1-4.3}\\ &\ll \sum_{k \geq 1} (2 \delta \log \log y)^k U_k(v) && \text{Lemma 4.4}\\ &\ll \sum_{k \leq v}\frac{\lambda^k}{k!}\cdot\frac{v-k+1}{v}. && \text{Lemma 4.5}
		\end{align*}

	\subsection*{Notation} Throughout the paper we use standard Vinogradov and Oh asymptotic notation: $\ll,\gg,\sim,\asymp,O(\cdot),o(\cdot)$. The variables $p,q$ always denote prime numbers. The function $\log_kx$ stands for the $k$-fold iterated logarithm of $x$ (for example, $\log_2 x = \log \log x$). The function $\omega(n)$ denotes the number of distinct prime factors of $n$. We give at the very end of the paper (\Cref{table:notation}) an index of some of the most important symbols used throughout the text.
	
	\subsection*{Acknowledgments}
	The author is indebted to Dimitris Koukoulopoulos for his guidance and mentorship throughout the project, and for the initial idea. We thank Kevin Ford for useful feedback on an earlier version of this manuscript. We thank Paul Bourgade for pointing us towards some relevant literature on ballot theorems. Countless thanks are due to Cihan Sabuncu for myriad helpful conversations. The author also benefited from discussions with Sun Kai Leung and Tony Haddad. Lastly, the author is eternally grateful to Stephanie Tanasia for her endless support.
	
	\section{Useful Lemmas}
	\begin{lemma}\label{defining Q} Let $\kappa \ge 2$, and let $Q$ be a set of primes as in \eqref{definition of delta}. That is: 
		$$\Big|\#\big(Q\cap[1,x]\big) - \frac{ \delta x}{\log x}\Big|\le \frac{\kappa x}{(\log x)^2},$$ for all $x\ge2$. Then
		\begin{equation}
			\sum_{\substack{p \in Q\\ p \leq x}} \frac{1}{p} = \density \log \log x + C(Q) +O_\kappa\bigg(\frac{1}{\log x}\bigg),
		\end{equation}
		where $C(Q)$ is a constant satisfying $C(Q) \ll \kappa$.
	\end{lemma}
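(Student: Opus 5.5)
The plan is partial summation against the counting function $\pi_Q(t):=\#(Q\cap[1,t])$. Write $\pi_Q(t)=\delta\,\mathrm{li}^*(t)+R(t)$, where $\mathrm{li}^*(t)=t/\log t$, so that the hypothesis \eqref{definition of delta} gives $|R(t)|\le \kappa t/(\log t)^2$ for $t\ge 2$. Riemann--Stieltjes integration then gives
\begin{equation*}
\sum_{\substack{p\in Q\\ p\le x}}\frac1p=\int_{2^-}^x\frac{d\pi_Q(t)}{t}=\frac{\pi_Q(x)}{x}+\int_2^x\frac{\pi_Q(t)}{t^2}\,dt .
\end{equation*}
The boundary term at $t=2$ vanishes because $\pi_Q(2^-)=0$. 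The boundary term at $x$ is $\pi_Q(x)/x\le \delta/\log x+\kappa/(\log x)^2\ll_\kappa 1/\log x$, so it can be absorbed into the error term.

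Next I substitute the decomposition into the integral. The main part is
\begin{equation*}
\delta\int_2^x\frac{dt}{t\log t}=\delta\log\log x-\delta\log\log 2 .
\end{equation*}
The remainder part is $\int_2^x R(t)\,t^{-2}\,dt$. Its integrand is bounded by $\kappa/(t(\log t)^2)$, which is integrable on $[2,\infty)$. So I write it as
\begin{equation*}
\int_2^\infty\frac{R(t)}{t^2}\,dt-\int_x^\infty\frac{R(t)}{t^2}\,dt .
\end{equation*}
The tail satisfies $\big|\int_x^\infty R(t)t^{-2}\,dt\big|\le \kappa\int_x^\infty \frac{dt}{t(\log t)^2}=\kappa/\log x$. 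Therefore
\begin{equation*}
C(Q):=-\delta\log\log 2+\int_2^\infty\frac{R(t)}{t^2}\,dt ,
\end{equation*}
and the total error is $O(\kappa/\log x)$.

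It remains to bound the constant. Since $0<\delta\le1$, we have $|\delta\log\log 2|\le|\log\log2|<1$. Also $\big|\int_2^\infty R(t)t^{-2}\,dt\big|\le \kappa/\log 2$. Hence $|C(Q)|\ll\kappa$, using $\kappa\ge2$ to absorb the absolute constant.

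There is no real obstacle. The only points needing care are two. First, the hypothesis is stated for real $x\ge2$, so the bound on $R(t)$ holds pointwise in the integral. Second, the bound on $C(Q)$ must be uniform in $Q$, and this holds because it depends only on $\kappa$ and on $\delta\le1$. For $x<2$ the sum is empty and the statement is vacuous, or it can be adjusted trivially, so I assume $x\ge 2$ throughout.
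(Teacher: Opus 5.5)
Your proposal is correct and follows essentially the paper's proof: partial summation against $\pi_Q$ with $\pi_Q(t)=\delta t/\log t+E_Q(t)$, the tail $\int_x^\infty E_Q(t)t^{-2}\,dt$ supplying the $O(\kappa/\log x)$ error, and the constant built from $\int_2^\infty E_Q(t)t^{-2}\,dt-\delta\log\log 2$ bounded by $O(\kappa)$. The only difference is bookkeeping: integrating by parts against $\pi_Q$ before substituting gives you the constant exactly, whereas the paper splits $d\pi_Q$ first and its displayed identity omits the $-\delta\int_2^x dt/(t\log^2 t)$ part of $\int_2^x t^{-1}\,d(\delta t/\log t)$, which harmlessly shifts its stated $C(Q)$ by $\delta/\log 2$.
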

	\begin{proof}
		This is a standard application of partial summation to \eqref{definition of delta}. Nonetheless, we include a full proof to emphasize the fact that the constants $C(Q)$ are bounded by $\kappa$. Let us write $\pi_Q(x) := \sum_{\substack{p \in  Q \cap[1,x]}} 1$. By assumption, we have
		\begin{equation}\label{pi Q error}
			\pi_Q(x) = \frac{\density x}{\log x} + E_Q(x), \quad (x \geq 2)
		\end{equation}
		where $|E_Q(x)| \leq \kappa x/(\log x)^2$. We apply partial summation:
		$$\sum_{\substack{p \leq x \\ p \in Q}} \frac{1}{p} = \delta \int_2^x \frac{dt}{t \log t}+ \frac{E_Q(t)}{t}\Big|_{t=2^-}^x + \int_2^x \frac{E_Q(t)}{t^2} dt.$$
		By \eqref{pi Q error}, it follows that
		\begin{align*}
			\sum_{\substack{p \leq x \\ p \in Q}} \frac{1}{p} &=  \density \log \log x+C(Q) + O_\kappa\left(\frac{1}{\log x}\right)
		\end{align*}
		where $$C(Q) :=\int_2^\infty \frac{E_Q(t)}{t^2}dt -\frac{E_Q(2^-)}{2}-\density \log \log 2 .$$
		By substituting $|E_Q(t)| \leq \kappa t/(\log t)^2$ into the above, we find that ${|C(Q)| \ll \kappa}$.\end{proof}

	\begin{lemma}[\cite{koukoulopoulos2019distribution}, Exercise 14.5]\label{dim Brun}
		Let $f$ be a multiplicative function with $0 \leq f \leq \tau_k$ for some $k \in \N$. Further suppose that there exists $c>0$ such that
		$$\sum_{p \leq y}f(p) \geq \frac{c y}{\log y}\quad (\sqrt{x} \leq y \leq x).$$
		One then has
		$$\sum_{n \leq x}f(n) \asymp_{c} x \cdot \exp\bigg\{\sum_{p \leq x} \frac{f(p) - 1}{p}\bigg\}.$$
	\end{lemma}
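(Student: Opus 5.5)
The plan is to prove the two bounds separately. In both, the Euler product $P(y):=\prod_{p\le y}\big(1+\sum_{a\ge1}f(p^a)/p^a\big)$ serves as the comparison quantity. Since $0\le f\le\tau_k$, we have $\sum_{a\ge2}\tau_k(p^a)/p^a\ll_k 1/p^2$. Hence $P(y)\asymp_k\exp\{\sum_{p\le y}f(p)/p\}$ uniformly in $y$. By Mertens, $\exp\{-\sum_{p\le x}1/p\}\asymp 1/\log x$, so the target is equivalent to
$$\sum_{n\le x}f(n)\asymp \frac{x}{\log x}\,P(x).$$
We also note that $\sum_{y<p\le x}f(p)/p\le k\log(\log x/\log y)+o(1)$. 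So $P(x^{\theta})\asymp_{k,\theta}P(x)$ for any fixed $\theta\in(0,1]$. All implicit constants below may depend on $k$ and $c$.

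\textbf{Upper bound.} We use the standard Chebyshev--Hall--Tenenbaum device. Write $S(x)=\sum_{n\le x}f(n)$ and start from
$$S(x)\log x=\sum_{n\le x}f(n)\log n+\sum_{n\le x}f(n)\log(x/n).$$
For the second sum, $\log(x/n)\le x/n$ gives at most $x\sum_{n\le x}f(n)/n\le xP(x)$. For the first sum, write $\log n=\sum_{p^a\| n}\log p^a$ and set $n=p^am$ with $p\nmid m$. This bounds it by
$$\sum_{m\le x}f(m)\sum_{p^a\le x/m}\tau_k(p^a)\log p^a.$$
The inner sum is $\ll_k x/m$ by Chebyshev's bound, together with the estimate $\sum_{a\ge2}\tau_k(p^a)\log p^a/p^a\ll 1$ for the prime powers. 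Hence the first sum is also $\ll xP(x)$. Dividing by $\log x$ gives the upper bound. Note that the hypothesis on $f(p)$ is not needed for this half.

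\textbf{Lower bound.} We restrict to integers of the form $n=mp$ with $m\le x^{1/4}$ and $p$ a prime in $(\sqrt x,x/m]$. Since $p>\sqrt x\ge m$, the prime $p$ is the largest prime factor of $n$ and $p\nmid m$. So the representation is unique, and $f(n)=f(m)f(p)$. For $m\le x^{1/4}$ we have $x/m\in[x^{3/4},x]$, so the hypothesis applies to $y=x/m$. It gives
$$\sum_{\sqrt x<p\le x/m}f(p)\ \ge\ \frac{c\,x/m}{\log x}-k\pi(\sqrt x)\ \gg_c\ \frac{x}{m\log x}.$$
Summing over $m$,
$$S(x)\gg \frac{x}{\log x}\sum_{m\le x^{1/4}}\frac{f(m)}{m}.$$
It therefore remains to show that $\sum_{m\le y}f(m)/m\gg P(y)$ with $y=x^{1/4}$. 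This is the step I expect to need the most care, since it requires lower-bounding a truncated sum rather than just comparing with an Euler product.

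For that last step I would use Rankin's trick. Let $z=y^{1/K}$, with $K$ a large constant depending on $k$. Then
$$\sum_{m\le y}\frac{f(m)}{m}\ \ge\ \sum_{P^+(m)\le z}\frac{f(m)}{m}-\sum_{\substack{m>y\\ P^+(m)\le z}}\frac{f(m)}{m}=P(z)-T.$$
To bound the tail $T$, insert the factor $(m/y)^{\sigma}$ with $\sigma=1/\log z$. This gives
$$T\le y^{-\sigma}\prod_{p\le z}\Big(1+\sum_{a\ge1}\frac{f(p^a)}{p^{a(1-\sigma)}}\Big)\le e^{-K}P(z)\exp\Big\{\sum_{p\le z}\frac{k(p^{\sigma}-1)}{p}+O_k(1)\Big\}.$$
Here $p^{\sigma}-1\ll\log p/\log z$ for $p\le z$, so the exponent is $O_k(1)$ by Mertens. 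Hence $T\le e^{-K+O_k(1)}P(z)\le \tfrac12P(z)$ once $K$ is large. This yields $\sum_{m\le y}f(m)/m\ge\tfrac12P(z)\gg_k P(y)\asymp P(x)$. That completes the lower bound, and together with the upper bound it proves the lemma.
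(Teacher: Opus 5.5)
Your proof is correct, and its skeleton matches the paper's. Both reduce the lower bound, via $n=mp$ with $p$ a large prime (the paper takes $m\le x^{1/3}<p$, you take $m\le x^{1/4}$, $p>\sqrt x$), to the estimate $\sum_{m\le y}f(m)/m\gg\exp\{\sum_{p\le y}f(p)/p\}$. Both get the upper bound for the logarithmic sum from the Euler product.

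The genuine difference is how that truncated logarithmic sum is bounded from below. The paper uses a completion trick. It defines $g$ by $f*g=\tau_k$, notes that $g(p)=k-f(p)\ge 0$, and derives the lower bound from
$$\Big(\sum_{n\le x}\mu^2(n)f(n)/n\Big)\Big(\sum_{m\le x}\mu^2(m)g(m)/m\Big)\ge\sum_{a\le x}\mu^2(a)\tau_k(a)/a\gg(\log x)^k,$$
together with $\sum_{m\le x}\mu^2(m)g(m)/m\le\exp\{\sum_{p\le x}(k-f(p))/p\}$. You instead use Rankin's trick on $z$-smooth numbers with $z=y^{1/K}$. You show the tail $m>y$ is at most half of $P(z)$, and then pay a factor $K^{k}$ to pass from $P(z)$ to $P(y)$. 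I checked that the local-factor comparison and the $O_k(1)$ exponent hold once $z$ is moderately large.

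The paper's argument is shorter. However, it uses $f(p)\le k$ essentially, to make $g(p)\ge 0$ after restricting to squarefrees, and it relies on the external lower bound for $\sum\mu^2\tau_k/a$. Yours is self-contained and would work for any nonnegative multiplicative $f$ with $f(p)$ bounded and prime powers controlled.

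For the upper bound on $\sum_{n\le x}f(n)$, the paper simply cites Theorem 14.2 of Koukoulopoulos's book. You prove it directly by the standard device: split $\log x=\log n+\log(x/n)$, then use $\log n=\sum_{p^a\| n}\log p^a$, Chebyshev, and $\log(x/n)\le x/n$. This also makes explicit that the hypothesis on $\sum_{p\le y}f(p)$ is needed only for the lower bound.
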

	\begin{proof}
		We will first show that \begin{equation*}
			\sum_{n \leq x}\frac{f(n)}{n} \asymp_k \exp\bigg(\sum_{p \leq x} \frac{f(p)}{p} \bigg).
		\end{equation*}
		The upper bound in the above equation follows from an Euler product expansion:
		\begin{equation}\label{ex 14.5 eq 6}
			\sum_{n \leq x}\frac{f(n)}{n} \le \exp\bigg(\sum_{p \leq x} \log \Big(1+\frac{f(p)}{p}+\frac{f(p)^2}{p^2}+\cdots \Big)\bigg) \ll_k \exp\bigg(\sum_{p \leq x} \frac{f(p)}{p} \bigg),
		\end{equation}
		since $\log(1+f(p)/p+\cdots) = f(p)/p +O_k(1/p^2)$. 
		
		The lower bound requires a bit more work. Define $g$ to be the arithmetic function for which $f * g = \tau_k$, where $*$ denotes the Dirichlet convolution.  Note that $g$ is multiplicative, since the set of multiplicative functions is closed under Dirichlet convolution. By definition, we have $g(p) = \tau_k(p) - f(p) \geq 0$. Since $g(p) \ge 0$ for all primes and $g$ is multiplicative, then $\mu^2(n)g(n) \geq 0$ for all positive integers $n$. Since we also have $\mu^2(n) f(n) \ge 0$, then we can write
		
		\begin{align*}
			\bigg(\sum_{n \leq x} \frac{\mu^2(n)f(n)}{n}\bigg)\bigg(\sum_{m \leq x} \frac{\mu^2(m)g(m)}{m}\bigg) &\geq \sum_{\substack{n,m \le x \\ nm \le x \\ (n,m) = 1}} \frac{\mu^2(n)\mu^2(m)f(n)f(m)}{nm} \\ &= \sum_{\substack{n,m \le x \\ nm \le x \\ (n,m) = 1}} \frac{\mu^2(nm)f(n)f(m)}{nm},
		\end{align*}
		since $\mu^2$ is multiplicative. By writing $a = nm$ in the above sum, we see that
		\begin{equation*}
			\sum_{\substack{n,m \le x \\ nm \le x \\ (n,m) = 1}} \frac{\mu^2(nm)f(n)f(m)}{nm} = \sum_{a \leq x} \frac{\mu^2(a)}{a}\sum_{nm=a} f(n)f(m) =  \sum_{a \leq x} \frac{\mu^2(a)}{a} (f*g)(a).
		\end{equation*}
		Using the fact that $f*g = \tau_k$, we conclude that
		\begin{equation}\label{ex 14.5 eq 1}
			\bigg(\sum_{n \leq x} \frac{\mu^2(n)f(n)}{n}\bigg)\bigg(\sum_{m \leq x} \frac{\mu^2(m)g(m)}{m}\bigg) \geq \sum_{a \leq x} \frac{\mu^2(a) \tau_k(a)}{a}.
		\end{equation}
		
		By using the non-negativity of $f$, together with \eqref{ex 14.5 eq 1}, we see that
		\begin{equation}\label{ex 14.5 eq 2}
			\sum_{n \leq x}\frac{f(n)}{n} \geq \bigg(\sum_{a \leq x} \frac{\mu^2(a) \tau_k(a)}{a}\bigg)\bigg(\sum_{m \leq x} \frac{\mu^2(m)g(m)}{m}\bigg)^{-1}.
		\end{equation}
		Since $g(p) = \tau_k(p) -f(p) = k-f(p),$ we can can use the inequality $1+t \leq e^t$ to write
		\begin{equation}\label{ex 14.5 eq 3}
			\sum_{m \leq x}\frac{\mu^2(m)g(m)}{m} \le \exp\bigg(\sum_{p \leq x}\log\Big(1+\frac{g(p)}{p}\Big)\bigg) \le  \exp\bigg(\sum_{p \leq x}\frac{k-f(p)}{p}\bigg).
		\end{equation}
		On the other hand, by standard analytic estimates, we have
		\begin{equation}\label{ex 14.5 eq 4}
			\sum_{a \leq x}\frac{\mu^2(m)\tau_k(a)}{a}  \gg_k (\log x)^k \asymp_k \exp\bigg(\sum_{p \leq x}\frac{k}{p}\bigg).
		\end{equation}
		By substituting \eqref{ex 14.5 eq 3} and \eqref{ex 14.5 eq 4} into \eqref{ex 14.5 eq 2}, we find that
		\begin{equation}\label{ex 14.5 eq 5}
			\sum_{n \leq x}\frac{f(n)}{n} \gg_k \exp\bigg(\sum_{p \leq x}\frac{f(p)}{p}\bigg).
		\end{equation}
		The equations \eqref{ex 14.5 eq 5} and \eqref{ex 14.5 eq 6} together show that
		\begin{equation}\label{ex 14.5 eq 7}
			\sum_{n \leq x}\frac{f(n)}{n} \asymp_k \exp\bigg(\sum_{p \leq x} \frac{f(p)}{p} \bigg).
		\end{equation}
		
		To finish the proof, we Note that all products $mp$ with $m\le x^{1/3}$ and $p>x^{1/3}$ are distinct. Hence, we can bound the sum from below by isolating these terms:
		\begin{equation}\label{ex 14.5 eq 8}
			\sum_{n \leq x}f(n) \geq \sum_{m \leq x^{1/3}} f(m) \sum_{x^{1/3} < p \leq x/m} f(p).
		\end{equation}
		Since $m \leq x^{1/3}$, the upper limit of the inner sum satisfies $x/m \geq x^{2/3} > \sqrt{x}$. This places $x/m$ inside the domain where our hypothesis $\sum_{p \leq y} f(p) \geq c y/\log y$ holds. We can therefore estimate the inner sum by writing:
		\begin{equation*}
			\sum_{x^{1/3} < p \leq x/m} f(p) = \sum_{p \leq x/m} f(p) - \sum_{p \leq x^{1/3}} f(p) \geq c \frac{x/m}{\log(x/m)} - O_k\bigg(\frac{x^{1/3}}{\log x}\bigg).
		\end{equation*}
		Because $\log(x/m) \asymp \log x$ and the error term is negligible relative to the main term when $x/m \geq x^{2/3}$, the inner sum is $\gg_{c,k} \frac{x}{m \log x}$ for sufficiently large $x$. Substituting this back into \eqref{ex 14.5 eq 8} yields
		\begin{equation*}
			\sum_{n \leq x} f(n) \gg_{c,k} \frac{x}{\log x} \sum_{m \leq x^{1/3}} \frac{f(m)}{m}.
		\end{equation*}
		By our result in \eqref{ex 14.5 eq 7}, the sum over $m$ is $$\asymp_k \exp\Big(\sum_{p \leq x^{1/3}} \frac{f(p)}{p}\Big).$$ We can extend this prime sum up to $x$ at the cost of a constant factor depending on $k$, since $$\sum_{x^{1/3} < p \leq x} \frac{f(p)}{p} \leq k \sum_{x^{1/3} < p \leq x} \frac{1}{p} \ll_k 1.$$ Using Mertens' theorem, $1/\log x \asymp \exp\big(-\sum_{p \leq x} \frac{1}{p}\big)$, we arrive at the lower bound:
		\begin{equation}\label{ex 14.5 eq 9}
			\sum_{n \leq x} f(n) \gg_{c,k} x \cdot \exp\bigg(\sum_{p \leq x} \frac{f(p)-1}{p}\bigg).
		\end{equation}
		
		For the upper bound, we use estimates on sums of multiplicative functions which appear in the literature. By \cite[{Theorem 14.2}]{koukoulopoulos2019distribution}, we have $$\sum_{n \leq x} f(n) \ll_k x \exp \bigg(\sum_{p \leq x} \frac{f(p) - 1}{p}\bigg).$$
		This completes the proof of the lemma.
	\end{proof}
	\begin{lemma}\label{count s P-}
		Let $c,\kappa,y_0$ be as in \Cref{mainthm}. Let $x > y_0,$ and suppose $z \in [3/2,x/2].$  Uniformly over all sets of primes $Q$ satisfying \eqref{definition of delta} with $\density \ge c$, we have 
		$$\sum_{\substack{n \leq x\\ n \in \S_Q \\ P^-(n) > z}}1 \asymp_{c,\kappa} \frac{x}{(\log x)^{1-\density}(\log z)^\density}.$$
	\end{lemma}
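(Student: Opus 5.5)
We apply \Cref{dim Brun} to the multiplicative function $f = \mathbb{1}_{\S_Q}\cdot\mathbb{1}_{P^-(\cdot)>z}$. Concretely, $f$ is the completely multiplicative function with $f(p)=1$ if $p\in Q$ and $p>z$, and $f(p)=0$ otherwise. Then $0\le f\le \tau_1$, so the case $k=1$ applies.

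\textbf{Step 1: the hypothesis of \Cref{dim Brun}.} We need $\sum_{p\le y}f(p)\ge c' y/\log y$ for $\sqrt{x}\le y\le x$. This requires $z$ to be small compared with $\sqrt x$. By \eqref{definition of delta}, for such $y$,
$$\sum_{p\le y}f(p)\ge \#(Q\cap[1,y])-\pi(z)\ge \frac{\delta y}{\log y}-\frac{\kappa y}{(\log y)^2}-\pi(z).$$
Take $z\le x^{1/4}$ and $x>y_0(c,\kappa)$. Then this is $\ge \frac{c}{2}\frac{y}{\log y}$.

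\textbf{Step 2: evaluating the exponential.} With this hypothesis, \Cref{dim Brun} gives
$$\sum_{n\le x}f(n)\asymp_c x\exp\Big(\sum_{p\le x}\frac{f(p)-1}{p}\Big).$$
The exponent equals $\sum_{z<p\le x,\,p\in Q}1/p-\sum_{p\le x}1/p$. By \Cref{defining Q} and Mertens' theorem it is
$$\delta\log\log x-\delta\log\log z-\log\log x+O_\kappa(1),$$
where we use that the $O(1/\log z)$ error is bounded since $z\ge 3/2$. For $z<2$ the $Q$-sum starts at $p=2$, and the estimate still holds with $\log z$ replaced by a constant. This is consistent with $(\log z)^\delta\asymp_c 1$ in that range, as $\delta\ge c$ is bounded below and $\log z\ge\log(3/2)$. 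The result is $x(\log x)^{\delta-1}(\log z)^{-\delta}$, up to constants depending on $c,\kappa$, for $z\le x^{1/4}$.

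\textbf{Step 3: the range $x^{1/4}<z\le x/2$.} This is the main obstacle, since \Cref{dim Brun} no longer applies directly.

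For the upper bound, monotonicity in $z$ bounds the count by its value at $z=x^{1/4}$. That value is $\asymp x/\log x$, which matches the target because here $\log z\asymp\log x$.

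For the lower bound, every $n\le x$ with $P^-(n)>z\ge x^{1/4}$ has at most $3$ prime factors, so a direct count suffices. It is enough to count primes $p\in Q\cap(z,x]$. By \eqref{definition of delta}, there are
$$\ge \frac{\delta x}{\log x}-\frac{\kappa x}{(\log x)^2}-\frac{\delta z}{\log z}-\frac{\kappa z}{(\log z)^2}$$
such primes. With $z\le x/2$, the term $\delta z/\log z\le \delta x/(2\log x)+O(x/\log^2 x)$. This leaves $\gg_c x/\log x$ for $x>y_0(c,\kappa)$.

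The delicate points are uniformity in $Q$ and the constant $C(Q)$. They are handled because $|C(Q)|\ll\kappa$ by \Cref{defining Q}, and because $\delta\ge c$ keeps all thresholds depending only on $c,\kappa$.
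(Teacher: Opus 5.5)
Your proof is correct and follows the paper's route: for small $z$ you apply \Cref{dim Brun} with $k=1$ to the indicator of $\S_Q\cap\{n: P^-(n)>z\}$ and evaluate the exponent via \Cref{defining Q} and Mertens' theorem (splitting at $x^{1/4}$ instead of the paper's $\sqrt{x}/2$), and for large $z$ you get the lower bound by counting primes of $Q$ in $(z,x]$. The only difference is cosmetic: for the upper bound when $z>x^{1/4}$ you use monotonicity in $z$ to reduce to the case $z=x^{1/4}$, whereas the paper simply invokes a standard sieve bound $\ll x/\log z$, so your argument is slightly more self-contained.
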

	\begin{proof}
		Let $f(n)$ be the indicator function of the set $\S \cap \{n : P^{-}(n) > z\}$.
		
		\par First let us consider the case $z \leq \sqrt{x}/2$. For $\sqrt{x} < y \leq x$, we have $y > 2z$. Thus, for $y$ sufficiently large in terms of $c,\kappa$, we ensure that
		$$\sum_{p \leq y} f(p) \geq c \cdot \frac{ y}{\log y}.$$
		Consequently, by \Cref{dim Brun} we have for all sufficiently large $x$ that
		\begin{align}
			\sum_{n \leq x}f(n) \asymp_{c,\kappa} x \cdot \exp\bigg\{-\sum_{\substack{p \leq x \\ p \not\in \S}} \frac{1}{p} -\sum_{\substack{p \leq z \\ p \in \S}} \frac{1}{p} \bigg\}. \label{proof lem sieve eq 1}
		\end{align}
		By \Cref{defining Q},
		\begin{align*}
			\sum_{\substack{p \leq z \\ p \in \S}} \frac{1}{p} &= \density \log_2 z +C(Q) +o_\kappa(1),
		\end{align*}
		and
		\begin{align*}
			\sum_{\substack{p \leq x \\ p \not\in \S}} \frac{1}{p} &= (1-\density) \log_2 x +C'(Q) +o_\kappa(1).
		\end{align*}
		By Mertens' theorem, $C'(Q) = M'-C(Q),$ where $M'$ is Mertens' constant. Hence, by \eqref{proof lem sieve eq 1} we have
		\begin{align*}
			\sum_{n \leq x}f(n)&\asymp_c x \cdot \exp\big\{-(1-\density)\log_2 x -\density \log_2 z -M' +o_{\kappa}(1)\big\}\\
			&\asymp_{c,\kappa} \frac{x}{ (\log x)^{1-\density} (\log z)^\density}.
		\end{align*}
		
		\par Lastly, let us consider the case $z \in [\sqrt{x}/2,x/2]$. In this range, we have $\log z \asymp \log x$. By standard sieve upper bounds, we have
		\begin{align*}
			\sum_{n \leq x}f(n) &\ll_{c,\kappa} \frac{x}{\log z} \\
			&\asymp_{c,\kappa} \frac{x}{ (\log x)^{1-\density} (\log z)^\density}.
		\end{align*}
		For the lower bound, we restrict our count to primes. Since $f(p) = 1$ for all primes $p \in \S$ such that $p > z$, we have
		\begin{align*}
			\sum_{n \leq x}f(n) \geq \sum_{\substack{z < p \leq x \\ p \in \S}} 1 &\asymp_{c,\kappa} \frac{x}{\log x} \\
			&\asymp_{c,\kappa} \frac{x}{ (\log x)^{1-\density} (\log z)^\density}. \qedhere
		\end{align*}
	\end{proof}
	The next lemma appears in a more general form in \cite{koukoulopoulos2010localized}. We state it in a form which is more specific to our context. Note that the variable we call $2y$ is written as $x$ in the mentioned reference.
	\begin{lemma}[{\cite[Lemma 2.2 (b)]{koukoulopoulos2010localized}}] \label{dimitris lemma 2.2}
		Let $f : \N \to [0,+\infty)$ be an arithmetic function. Assume there exists a constant $C_f$ depending only on $f$ such that $f(ap) \leq C_f f(a)$ for all $a \in \N$ and all primes $p$ with $(a,p) = 1$. Let $3/2 \leq y, h \geq 0, \epsilon > 0$. Then
		$$\sum_{\substack{a: P^+(a) \leq 2y \\ \mu^2(a) = 1}}\frac{f(a)}{a \log^h(P^+(a)+(2y)^\epsilon/a)} \ll_{C_f,h,\epsilon} \frac{1}{(\log y)^h}\sum_{\substack{a: P^+(a) \leq 2y \\ \mu^2(a) = 1}}\frac{f(a)}{a}.$$
	\end{lemma}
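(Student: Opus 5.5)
Since $\log(P^+(a)+(2y)^\epsilon/a)\gg\log y$ whenever $a\le(2y)^{\epsilon/2}$ or $P^+(a)\ge(2y)^{\epsilon/2}$ (up to constants depending on $\epsilon$), the only troublesome $a$ are those with $a>(2y)^{\epsilon/2}$ and $P^+(a)$ small. The plan is to split the sum according to the size of $P^+(a)$ and then use the hypothesis $f(ap)\le C_f f(a)$ together with Rankin's trick. This will show that integers with a small largest prime factor contribute little relative to the full sum $\sum f(a)/a$.

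\textbf{Step 1 (dyadic decomposition).} Write $Y=\log(2y)$. Partition the squarefree $a$ with $P^+(a)\le 2y$ according to $P^+(a)\in(e^{Y/2^{j+1}},e^{Y/2^{j}}]$ for $j\ge0$, with a final block for very small $P^+(a)$. For $j\le j_0=O_\epsilon(1)$ the denominator is already $\gg_{h,\epsilon}Y^h$, so these blocks are trivially fine. For larger $j$, take the terms with $a\le(2y)^{\epsilon/2}$. Their denominator satisfies $\log((2y)^\epsilon/a)\ge(\epsilon/2)Y$, so they are also fine. It remains to handle the terms with $P^+(a)\le e^{Y/2^j}$ and $a>(2y)^{\epsilon/2}$, where the denominator is at least $(Y/2^{j+1})^h$.

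\textbf{Step 2 (Rankin plus the submultiplicativity).} To bound $\sum_{P^+(a)\le w,\,a>(2y)^{\epsilon/2}}f(a)/a$ with $w=e^{Y/2^j}$, insert the factor $(a/(2y)^{\epsilon/2})^{\sigma}$ with $\sigma=c_0/\log w$. The key is to compare with the full sum $S=\sum_{P^+(a)\le 2y}f(a)/a$ rather than with an Euler product, since $f$ is not assumed multiplicative. We therefore write $a=bm$, with $b$ being the $w$-smooth part (the whole of $a$ here), and use a comparison argument of the following kind. The weight $a^\sigma=\prod_{p|a}p^\sigma$ with $p^\sigma\le e^{c_0}$ shows that $a^\sigma\le\prod_{p|a}(1+O(\sigma\log p))$. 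Expanding this product over subsets $d|a$ and applying $f(a)\le C_f^{\omega(d)}f(a/d)$ would blow up, so the order must be reversed. Instead, to each $a$ we attach multiples $ap$ with $p\in(w,2y]$, and use that the $ap$ are "almost" injective. The cleaner route, which I expect Koukoulopoulos uses, is an iterative (Buchstab-type) estimate. Setting $T(w)=\sum_{P^+(a)\le w}f(a)\log a/a$, we have $\log a=\sum_{p|a}\log p$, which gives
\[T(w)\le \sum_{p\le w}\frac{\log p}{p}\sum_{P^+(b)\le w}\frac{f(bp)}{b}\le C_f\log w\cdot(1+o(1))\sum_{P^+(b)\le w}\frac{f(b)}{b}.\]
Iterating $\log^k a$ this way gives the $k$-th moment bound $\sum_{P^+(a)\le w}f(a)(\log a)^k/a\le (C k\log w)^k\,S_w$. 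Markov's inequality with $k\approx\log(Y)$-scale chosen optimally then bounds the contribution of $a>(2y)^{\epsilon/2}$ by $S_w\,e^{-c\,2^j}$.

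\textbf{Step 3 (summing).} Since $S_w\le S$ trivially, as $f\ge0$, the block $j$ contributes $\ll (2^{j}/Y)^h e^{-c2^j}S$. Summing over $j$ converges and gives $\ll_{C_f,h,\epsilon}Y^{-h}S$, as claimed. The main obstacle is Step 2. We must get super-polynomial decay in $2^j$ using only the one-sided hypothesis $f(ap)\le C_f f(a)$ and without any multiplicativity of $f$. The moment iteration is what I will try first. The delicate point is handling the squarefree condition and the coprimality $(b,p)=1$ when removing a prime. This works because $a$ is squarefree, so every $p|a$ is coprime to $a/p$, which is exactly what the hypothesis needs.
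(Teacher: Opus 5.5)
The paper gives no proof of this lemma; it quotes Koukoulopoulos's Lemma 2.2(b) directly. Your proposal therefore has to stand on its own, and it does, up to two repairs in Step 2.

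Step 1 is correct. Terms with $a\le(2y)^{\epsilon/2}$ or $P^+(a)\ge(2y)^{\epsilon/2}$ are trivially acceptable. On the block $P^+(a)\in(e^{Y/2^{j+1}},e^{Y/2^j}]$ the weight is at most $(2^{j+1}/Y)^h$. Comparing with $S_w\le S$, which uses only $f\ge 0$, is the right way to avoid any multiplicativity. Squarefreeness gives $(a/p,p)=1$, which is exactly what the hypothesis needs. The Rankin discussion at the start of Step 2 can be deleted.

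\textbf{First repair: the iteration.} It is not a literal repetition of the case $k=1$. Removing $p$ from $a=bp$ leaves $(\log b+\log p)^{k-1}$, so what you actually get is
\[
M_k\le C_f\sum_{i<k}\binom{k-1}{i}M_i\sum_{p\le w}\frac{(\log p)^{k-i}}{p},\qquad M_i:=\sum_{P^+(a)\le w,\ \mu^2(a)=1}\frac{f(a)(\log a)^i}{a}.
\]
For $w\ge 2$ one has $\sum_{p\le w}(\log p)^m/p\le K(\log w)^m$. Induction against the Touchard recursion $T_k(A)=A\sum_{i<k}\binom{k-1}{i}T_i(A)$ then gives
\[
M_k\le T_k(C_fK)(\log w)^kS_w\le k!\,e^{(e-1)C_fK}(\log w)^kS_w,
\]
which is your claimed bound. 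Also, the factor $(1+o(1))$ in your $k=1$ step should be a constant factor, since $w$ need not be large.

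\textbf{Second repair: the choice of $k$.} To get $e^{-c2^j}$, $k$ must depend on $j$: take $k\asymp\epsilon 2^j$, the order of $u=\log((2y)^{\epsilon/2})/\log w$. A single $k$ of size $\log Y$ used for every block gives no decay on the blocks with $2^j=O(\log Y)$, and you lose a factor $(\log Y)^h$.

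In fact the super-polynomial decay you call the main obstacle is not needed. Any fixed integer $k>h$ bounds block $j$ by $\ll_{k,C_f,\epsilon}2^{j(h-k)}Y^{-h}S$, which already sums over $j$.

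\textbf{A route that avoids moments.} Set $R(t):=\sum_{P^+(a)\le w,\,a>t,\,\mu^2(a)=1}f(a)/a$. The same identity $\log a=\sum_{p|a}\log p$, together with the monotonicity of $R$, gives the recursion $R(t)\log t\le C_f(\log w+O(1))R(t/w)$. Iterating it $\lfloor u/2\rfloor$ times gives $R(t)\le(C/u)^{\lfloor u/2\rfloor}S_w$ with $u=\log t/\log w$.
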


	\section{The Lower Bound in Proposition 1.4}\label{sec lower bounds}
	In this section, we prove the lower bounds implicit in \Cref{mainthm}. Define
	\begin{equation}\label{def L}
		\mathscr{L}(a) = \bigcup_{d|a}(\log d - \log 2, \log d]; \quad L(a) = \text{meas} \big(\mathscr{L}(a)\big),
	\end{equation}
	where `meas' denotes the Lebesgue measure on $\R$. As a first step towards the lower bound, we relate $H(x,y,2y)$ to a sum involving $L(a)$. The following lemma accomplishes this by generalizing Lemma 4.1 of \cite{ford2008distribution}.
	
	\begin{lemma}\label{LB initial reduction}
	Let $c,\kappa,y_0$ be as in \Cref{mainthm}. Suppose that $y_0 < y \leq \sqrt{x}$. Then uniformly over all sets of primes $Q$ satisfying \eqref{definition of delta} with $\density \ge c$, we have  
		$$H_Q(x,y,2y) \gg_{c,\kappa} \frac{x}{(\log x)^{1-\density}(\log y)^{1+\density}} \sum_{\substack{a \leq y^{1/8} \\ a \in \S \\ \mu^2(a) = 1}} \frac{L(a)}{a}.$$
	\end{lemma}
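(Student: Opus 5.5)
Following Ford's Lemma 4.1, I will count integers of the form $n = apb$ with the following constraints:
\begin{itemize}
\item $a \le y^{1/8}$, $a\in\S$, $\mu^2(a)=1$;
\item $p\in Q$ is a prime with $y/d < p \le 2y/d$ for some $d\mid a$, i.e.\ $\log p \in \log y - \mathscr{L}(a)$ shifted appropriately (so that $dp\in(y,2y]$);
\item $b\in\S$ with $P^-(b) > 2y$ (or more safely $P^-(b)>y^{1/2}$), $b \le x/(ap)$.
\end{itemize}
Each such $n$ lies in $\S\cap[1,x]$ and has the divisor $dp\in(y,2y]$. The prime $p$ satisfies $p \ge y/a \ge y^{7/8}$, and since $a<p$ and all prime factors of $b$ exceed $p$'s range (I will impose $P^-(b)>2y\ge p$), the factorization $n=apb$ is essentially recoverable: $a$ is the part of $n$ composed of primes $\le y^{1/8}$ (note $P^+(a)\le a \le y^{1/8}<p$), $p$ is the next prime, and $b$ the rest. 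So each $n$ is counted at most $O(1)$ times, and I need only lower-bound the number of triples.

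For fixed $a$ and $p$, \Cref{count s P-} with $z=2y$ and length $x/(ap)$ gives
$$\#\{b\} \gg_{c,\kappa} \frac{x/(ap)}{(\log(x/ap))^{1-\density}(\log y)^{\density}} \gg \frac{x}{ap(\log x)^{1-\density}(\log y)^{\density}}.$$
This requires $2y \le x/(2ap)$. Since $ap \le 2y \cdot$ (at most) and $y\le\sqrt x$, we have $x/(ap)\ge x/(2y)\ge \sqrt{x}/2$, so the condition $z\le (x/ap)/2$ can fail when $y$ is near $\sqrt x$. In that case I will take $z=\sqrt{x/(ap)}/2$ or similar, or simply allow $b$ with $P^-(b)>p$ and note that $\log z\asymp\log y$ either way. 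I will adjust the range so that \Cref{count s P-} applies with $\log z\asymp \log y$, which is the main technical nuisance. Also $\log(x/ap)\asymp\log x$ holds since $ap\le 2y^{9/8}\le x^{0.6}$.

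Summing over $p$, I need $\sum_{p\in Q,\ \log p\in I} 1/p$ over the set $I=\{t:\ t\in \log y-\log d+(0,\log 2],\ d\mid a\}$, which is a union of intervals of total measure $L(a)$ lying in $[\tfrac78\log y,\log 2y]$. By \eqref{definition of delta}, for an interval $(u,2u]$ with $u\ge y^{7/8}$ large in terms of $\kappa,c$, one has $\sum_{p\in Q\cap(u,2u]}1/p \gg \density/\log u \gg_c 1/\log y$. The error term $\kappa/(\log u)^2$ is negligible once $y_0$ is large. Decomposing $\mathscr{L}(a)$ into disjoint pieces aligned with intervals $(\log d-\log2,\log d]$ (a measure-$L(a)$ union is covered by disjoint such dyadic intervals up to a factor 2, and one can choose a disjoint subfamily of total measure $\ge L(a)/2$), I get $\sum_p 1/p \gg L(a)/\log y$. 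Putting everything together yields
$$H_Q(x,y,2y)\gg \frac{x}{(\log x)^{1-\density}(\log y)^{\density}}\sum_a\frac{1}{a}\cdot\frac{L(a)}{\log y},$$
which is the claim. The step I expect to need the most care is the uniqueness of the representation together with the range condition for applying \Cref{count s P-} near $y=\sqrt{x}$. If needed, I will restrict $y\le x^{1/2}$ with $b$ having $P^-(b)>2y$ and note that $x/(ap)\ge x^{1/2}/2\cdot y^{-1/8}\cdot$ const; if that is too tight, I will use $P^-(b)>y^{1/2}$, which still exceeds $P^+(a)$ and $p$'s uniqueness. The uniqueness then only gives bounded multiplicity, since $b$ has $O(1)$ prime factors below $2y$.
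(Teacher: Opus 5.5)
Your scheme is the paper's: count $n=apb$ with $a\le y^{1/8}$ and $dp\in(y,2y]$ for some $d\mid a$, count the cofactors $b$ with \Cref{count s P-}, and convert $\sum_p 1/p$ into $L(a)/\log y$. Your prime sum is fine. Applying \eqref{definition of delta} directly on intervals $(u,2u]$ with $u\ge y^{7/8}$, together with a disjoint subfamily of the intervals $(\log d-\log 2,\log d]$ of total length $\ge L(a)/2$, correctly handles intervals of bounded logarithmic length. On such intervals the $O_\kappa(1/\log x)$ error in \Cref{defining Q} would be as large as the main term.

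The gap is the step you flag yourself. You never commit to a treatment of the range of $b$, and the options you list fail as stated.
\begin{itemize}
\item Taking $P^-(b)>p$ breaks down when $p\ge x/(ap)$. For $y=\sqrt{x}$, $d=1$ and $a\asymp y^{1/8}$ one has $x/(ap)<y^{7/8}<p$, so only $b=1$ survives, whereas you need $\gg y^{7/8}/\log y$ values of $b$.
\item Taking $z=\sqrt{x/(ap)}/2$ for all pairs $(a,p)$ gives $\log z\asymp\log x$ rather than $\log y$. This loses a factor $(\log y/\log x)^{\delta}$ when $x$ is large compared to $y$.
\item Taking $P^-(b)>y^{1/2}$ for all pairs keeps the count right but destroys your multiplicity argument. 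The claim that $b$ has $O(1)$ prime factors below $2y$ is false once $x$ exceeds every fixed power of $y$ (only $y\le\sqrt x$ is assumed). Then $b\le x/(ap)$ can have $\gg\log x/\log y$ prime factors in $(y^{7/8},2y]$, each potentially an alternative choice of $p$.
\end{itemize}

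What is missing is the explicit case split the paper makes according to the size of $x/(ap)$. Note that $x/(ap)\ge x/(2y^{9/8})\ge y^{7/8}/2$ always.

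\emph{Case $x/(ap)>4y$.} Impose $P^-(b)>2y$. Then $z=2y\le x/(2ap)$, so \Cref{count s P-} applies. The representation is unique because $P^+(a)\le y^{1/8}<p\le 2y<P^-(b)$.

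\emph{Case $x/(ap)\le 4y$.} Impose $P^-(b)>y^{1/2}$ (the paper uses $y^{1/4}$). \Cref{count s P-} applies with $z=y^{1/2}\le x/(2ap)$. It still gives $\gg x/\big(ap(\log x)^{1-\delta}(\log y)^{\delta}\big)$ choices of $b$, and now $b\le 4y$ has at most two prime factors.

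In both cases $a$ is recovered from $n$ as its $y^{1/8}$-smooth part. In the first case $p$ is the only prime factor of $n/a$ not exceeding $2y$. Hence each $n$ comes from at most three triples $(a,p,b)$, which is all you need.

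With this split your proof goes through. Your bounded-multiplicity device is in fact the cleanest justification of the small range. Counting $b$ with $P^-(b)>y^{1/4}$, as the paper does there, does not actually enforce its condition (ii) on the prime factors of $b$.
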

	\Cref{LB initial reduction} will be proven in \Cref{sec lower bound proofs}. We bound the sum over $L(a)/a$ from below using the Cauchy-Schwarz inequality. This is the content of the next lemma, which is the same as Lemma 2.2 in \cite{ford2006integers}.
	\begin{lemma}\label{cauchy schwarz L}
		For any given $\mathscr{A} \subseteq \N,$ one has 
		\begin{equation}\label{CS 1}
			\sum_{a \in \mathscr{A}} \frac{L(a)}{a} \gg  \frac{\left(\sum_{a \in \mathscr{A}} \frac{\tau(a)}{a}\right)^2}{\left( \sum_{a \in \mathscr{A}} \frac{W(a)}{a}\right)},
		\end{equation}
		where \begin{equation}
			W(a) := \#\{(d,d'):d|a,d'|a, |\log(d/d')| \leq \log 2\}.
		\end{equation}
	\end{lemma}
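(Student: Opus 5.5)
The plan is to apply Cauchy--Schwarz to the function $\mathbb{1}_{\mathscr{L}(a)}$ on $\R$, one integer $a$ at a time, and then use Cauchy--Schwarz a second time to sum over $a \in \mathscr{A}$.

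First fix $a$ and introduce
$$F_a(t) := \#\{d \mid a : t \in (\log d - \log 2, \log d]\}.$$
The function $F_a$ vanishes outside $\mathscr{L}(a)$. Integrating over $t$ gives $\int_\R F_a(t)\,dt = \tau(a)\log 2$, since each divisor contributes an interval of length $\log 2$. Cauchy--Schwarz then yields
$$(\tau(a)\log 2)^2 = \Big(\int_{\mathscr{L}(a)} F_a\Big)^2 \le L(a)\int_\R F_a(t)^2\,dt.$$

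Next I compute the second moment. Expanding the square,
$$\int_\R F_a^2 = \sum_{d,d' \mid a}\text{meas}\big((\log d-\log 2,\log d]\cap(\log d'-\log 2,\log d']\big).$$
The measure of each intersection is $\max\{0,\log 2 - |\log(d/d')|\}$. This is at most $\log 2$, and it vanishes unless $|\log(d/d')| \le \log 2$. Hence $\int F_a^2 \le (\log 2)\,W(a)$, and therefore
$$L(a) \ge (\log 2)\,\frac{\tau(a)^2}{W(a)}.$$
This inequality is meaningful because $W(a) \ge \tau(a) \ge 1$, as the diagonal pairs $(d,d)$ are counted in $W(a)$.

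Finally, I sum over $a \in \mathscr{A}$ and apply Cauchy--Schwarz in the form
$$\Big(\sum \frac{\tau(a)}{a}\Big)^2 = \Big(\sum \frac{\tau(a)}{\sqrt{aW(a)}}\cdot\sqrt{\frac{W(a)}{a}}\Big)^2 \le \Big(\sum \frac{\tau(a)^2}{aW(a)}\Big)\Big(\sum \frac{W(a)}{a}\Big).$$
Combining this with the per-$a$ bound $L(a) \ge (\log 2)\,\tau(a)^2/W(a)$ gives \eqref{CS 1} with implied constant $\log 2$.

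I need to check one point. If $\mathscr{A}$ is infinite and the sums diverge, the statement should be read either for finite $\mathscr{A}$ or with the convention that it is trivial in that case. For finite $\mathscr{A}$ the argument above is complete.

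No step here is a real obstacle. The only care needed is to correctly bound the overlap length of the intervals when computing $\int F_a^2$.
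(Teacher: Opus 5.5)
Your proof is correct: the per-$a$ bound $L(a)\ge(\log 2)\,\tau(a)^2/W(a)$ followed by Cauchy--Schwarz over $a$ is valid, including the overlap computation $\max\{0,\log 2-|\log(d/d')|\}$. The paper gives no proof of its own and simply cites Lemma 2.2 of Ford's \emph{Integers with a divisor in $(y,2y]$}, whose argument applies Cauchy--Schwarz once over the pairs $(a,t)$ with weight $1/a$ to the same counting function. That single application is equivalent to your two-step version and gives the same constant $\log 2$.
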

	
	\Cref{LB initial reduction} and \Cref{cauchy schwarz L} reduce the problem of finding a lower bound for $H_Q(x,y,2y)$ to that of finding a set $\mathscr{A}$ of integers for which $\sum_{a \in \mathscr{A}} \tau(a)/a$ and $\sum_{a \in \mathscr{A}} W(a)/a$ can be sharply bounded from below and from above respectively. We now define such a set $\mathscr{A}$. We remark that our specific definition allows much of Ford's analysis on the $\delta = 1$ case in \cite{ford2008distribution} to directly carry over. Define $\lambda_j$ as follows: Let $\lambda_0 = 1.9$, and let $\lambda_j$ be the largest prime number in $Q$ for which
	\begin{equation}\label{define lambda}
		\sum_{\substack{\lambda_{j-1}< p \leq \lambda_j \\ p \in Q}} \frac{1}{p} \leq \density \log 2.
	\end{equation}
	In addition, define $D_j := (\lambda_{j-1},\lambda_j]$. Given a vector $\bm{b} = (b_1,\dots, b_h)$, define $\mathscr{A}(\bm{b})$ to be the set of squarefree integers which have $b_i$ prime factors in the interval $D_i$. When $\bm{b}$ is fixed, we write $k := b_1 + \cdots +b_h$ (i.e.~all numbers in $\mathscr{A}(\bm{b})$ have the same total number of prime factors, which we call $k$). 
	\begin{remark}
		The above construction implies that $\mathscr{A}(\bm{b}) \subseteq \S_Q$ for each vector $\bm{b}$. Furthermore, if $\bm{b}_1 \ne \bm{b}_2$ then $\mathscr{A}(\bm{b}_1) \cap \mathscr{A}(\bm{b}_2) = \emptyset$. In this way, $\cup_{\bm{b}} \mathscr{A}(\bm{b})$ forms a partition of the squarefree integers in $\S_Q$ if the union is taken over all vectors $\bm{b} \in \bigcup_{h \geq 1}(\mathbb{Z}_{\geq 0})^h$.
	\end{remark}
	Before proceeding, we prove a lemma on the size of $\lambda_j$.
	\begin{lemma}\label{lem: size of lambda}
		Let $c,\kappa$ be as in \Cref{mainthm}. Then there exists a constant $R =R(c,\kappa)$ depending only on $c,\kappa$ such that uniformly over $j \geq 1$, one has
		$$2^{j-R} \leq \log(\lambda_j) \leq 2^{j+R}.$$
	\end{lemma}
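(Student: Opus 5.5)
We prove the bound by comparing the greedy construction of the $\lambda_j$ with the asymptotic of \Cref{defining Q}. Write
$$S(t):=\sum_{p\in Q,\,p\le t}\frac1p=\density\log\log t+C(Q)+\epsilon(t),\qquad |\epsilon(t)|\le \frac{A_\kappa}{\log t}\quad(t\ge 2),$$
with $|C(Q)|\ll\kappa$; for $t<2$ we have $S(t)=0$. The plan is to show that $\density\log\log\lambda_j$ differs from $j\density\log 2$ by a bounded quantity, and then divide by $\density\ge c$.

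\textbf{Key step: each increment is nearly $\density\log 2$.} Since $\lambda_j$ is the largest prime of $Q$ with $S(\lambda_j)-S(\lambda_{j-1})\le\density\log2$, and the next prime $q\in Q$ after $\lambda_j$ violates this, we get
$$\density\log 2-\frac1q< S(\lambda_j)-S(\lambda_{j-1})\le \density\log 2 .$$
The left inequality uses $S(q)=S(\lambda_j)+1/q$. One should first check that $\lambda_j$ exists and that $\lambda_j\to\infty$. A maximal prime exists because $S(t)\to\infty$, as $\density>0$. The set of admissible primes is nonempty once $1/(\text{first prime of }Q\text{ after }\lambda_{j-1})\le\density\log 2$. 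This holds for large $\lambda_{j-1}$, but may fail at the start. For such small indices the statement is absorbed into $R$, with a little care: if some interval $D_j$ is empty, we need the convention that $\lambda_j=\lambda_{j-1}$ is not allowed. I expect this to need a remark that the first few steps are harmless.

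\textbf{Telescoping.} Summing from $1$ to $j$ gives
$$S(\lambda_j)=j\density\log2-\theta_j,\qquad 0\le\theta_j\le\sum_{i\le j}\frac1{q_i},$$
where $q_i>\lambda_i$ is the prime of $Q$ following $\lambda_i$. The main obstacle is bounding $\theta_j$ uniformly in $j$. The $q_i$ are increasing, so they are distinct primes, but a bare sum of $1/q_i$ could still diverge slowly. We therefore use growth. Comparing $S(\lambda_j)-S(\lambda_{j-1})\ge\density\log2-1/q_j$ with the asymptotic formula gives
$$\density\big(\log\log\lambda_j-\log\log\lambda_{j-1}\big)\ge\density\log 2-\frac1{q_j}-\frac{2A_\kappa}{\log\lambda_{j-1}} .$$
Once $\lambda_{j-1}$ exceeds a threshold $T(c,\kappa)$, the right-hand side is at least $\density\log 2/2$. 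Hence $\log\lambda_j$ grows at least geometrically, with ratio $2^{1/2}$, beyond the threshold. Then $\sum_i 1/q_i\le\sum_i 1/\lambda_i$ converges, and is bounded in terms of $c$ and $\kappa$. The number of indices before the threshold is also bounded in terms of $c,\kappa$: each step advances $S$ by at least $\density\log2/2$ for large enough primes, and $S(T)\le \density\log\log T+O(\kappa)$. So $\theta_j\ll_{c,\kappa}1$.

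\textbf{Conclusion.} Insert $S(\lambda_j)=\density\log\log\lambda_j+C(Q)+O(1/\log\lambda_j)$. This gives
$$\density\log\log\lambda_j=j\density\log2+O_{c,\kappa}(1).$$
Dividing by $\density\log 2\ge c\log2$ yields $\log_2\log\lambda_j=j+O_{c,\kappa}(1)$, which is the claim with a suitable $R=R(c,\kappa)$. The finitely many small $j$, where the asymptotic is not yet accurate, are covered by enlarging $R$. For the lower bound this is trivial, since $\lambda_j\ge\lambda_0$. For the upper bound, the bounded index count above ensures these $j$ have $\lambda_j\le T'(c,\kappa)$.
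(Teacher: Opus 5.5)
Your proof is correct, but it is organized differently from the paper's. The paper compares each block $D_j$ with \Cref{defining Q} separately to get $|\log\log\lambda_j-\log\log\lambda_{j-1}-\log 2|\le r/\log\lambda_{j-1}$. It then exponentiates and iterates the multiplicative recursion down to $\lambda_0$, so the final constant is $\exp(\pm r\sum_i 1/\log\lambda_i)$. The error from \Cref{defining Q} is therefore paid at every step, and its summability rests on the growth $\log\lambda_i\gg 1.5^i$, which the paper asserts only briefly.

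You instead telescope the prime sums exactly, $S(\lambda_j)=j\delta\log 2-\theta_j$, and apply \Cref{defining Q} only once, at $\lambda_j$. This has two advantages:
\begin{itemize}
\item The upper bound $\log\lambda_j\le 2^{j+R}$ follows at once from $S(\lambda_j)\le j\delta\log 2$, with no growth input at all.
\item For the lower bound, the only accumulated error is the overshoot $\sum_i 1/q_i\le\sum_i 1/\lambda_i$. Your qualitative growth statement (ratio $\sqrt2$ beyond a threshold $T(c,\kappa)$) is more than enough to bound it.
\end{itemize}
The number of indices before the threshold is simply at most $\pi(T)$, since the $\lambda_j$ are distinct primes; this is cleaner than counting $S$-increments, which only works for steps where $q_j$ is large. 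Your closing remark about enlarging $R$ for small $j$ is unnecessary. \Cref{defining Q} holds for every $x\ge 2$ with an error that is $O_\kappa(1)$ there, and $\lambda_j\ge 2$ for all $j\ge 1$, so your final identity already covers every $j$.

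The well-definedness issue you flag is genuine, and it is a defect of the paper's definition that its proof also passes over. For instance, if $2\in Q$ and $\delta<1/\log 4$, no prime is admissible for $\lambda_1$. Any reasonable convention repairs this, for example taking $\lambda_j$ to be the next prime of $Q$ when no admissible prime exists. Such a step can only occur while that prime is below $1/(c\log 2)$, hence $O_c(1)$ times. Each such step changes $\theta_j$ by less than $1/2$ in absolute value, so your argument goes through unchanged.
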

	\begin{proof}
		By the definition of $\lambda_j$, it is the largest prime in $Q$ such that the sum of reciprocals is $\leq \density \log 2$. Therefore, adding the next available prime $q \in Q$ to the sum would strictly exceed $\density \log 2$. Since $1/q \ll 1/\lambda_j \leq 1/\log \lambda_{j-1}$, we have
		\begin{equation}\label{eq: size of lambda 1}
			\sum_{\substack{\lambda_{j-1}< p \leq \lambda_j \\ p \in Q}} \frac{1}{p} = \density \log 2 + O\Big(\frac{1}{\log \lambda_{j-1}}\Big).
		\end{equation}
		By \Cref{defining Q}, we also have the asymptotic expansion
		\begin{equation}\label{eq: size of lambda 2}
			\sum_{\substack{\lambda_{j-1}< p \leq \lambda_j \\ p \in Q}} \frac{1}{p} = \delta(\log \log \lambda_j - \log \log \lambda_{j-1}) +O_{\kappa}\Big(\frac{1}{\log \lambda_{j-1}}\Big).
		\end{equation}
		Since $\density \geq c$, we can combine \eqref{eq: size of lambda 1} and \eqref{eq: size of lambda 2} and divide by $\delta$ to deduce that there exists a constant $r=r(c,\kappa)>0$ for which 
		\begin{equation*}
			\big|\log \log \lambda_j - \log \log \lambda_{j-1} - \log 2\big| \leq \frac{r}{\log \lambda_{j-1}}.
		\end{equation*}
		Exponentiating both sides yields
		$$ 2 (\log \lambda_{j-1}) e^{-\frac{r}{\log{\lambda_{j-1}}}} \leq \log \lambda_j \leq 2 (\log \lambda_{j-1}) e^{\frac{r}{\log{\lambda_{j-1}}}}.$$
		Applying this recursively down to $\lambda_0 = 1.9$, we see that 
		$$ 2^j (\log \lambda_0) \exp\bigg({-r\sum_{i = 0}^{j-1} \frac{1}{\log{\lambda_{i}}}}\bigg) \leq \log \lambda_j \leq 2^j (\log \lambda_0) \exp\bigg({r\sum_{i = 0}^{j-1} \frac{1}{\log{\lambda_{i}}}}\bigg).$$
		Because the recurrence implies $\log \log \lambda_i - \log \log \lambda_{i-1} \sim \log 2$, we have the coarse lower bound $\log \lambda_i \gg (1.5)^i$. Thus, the series $\sum_{i=0}^\infty \frac{1}{\log \lambda_i}$ converges rapidly. The exponential factors, along with the constant $\log \lambda_0$, can therefore be strictly bounded above and below by constants depending only on $c$ and $\kappa$. Absorbing these into a single parameter $2^{\pm R}$ completes the proof.
	\end{proof}
	
	 Upper and lower bounds can now be calculated for the $\tau$ and $W$ sums of \eqref{CS 1} when $\mathscr{A} = \mathscr{A}(\bm{b})$ for an appropriate type of $\bm{b}$. This is the content of the next two lemmas, which will be proved in \Cref{sec lower bound proofs}. 
	
	\begin{lemma}\label{LB tau} Let $c,\kappa$ be as in \Cref{mainthm}. Let $M = M(c,\kappa)$ be a sufficiently large constant depending only on $c,\kappa$. Suppose that $b_j = 0$ for all $j < M$, and $b_j \leq M j$ for all $j$. One has
		\begin{equation}
			\sum_{a \in \mathscr{A}(\bm{b})} \frac{\tau(a)}{a} \gg_{c,\kappa} \frac{(2 \density \log 2)^k}{b_M! \cdots b_h!}.
		\end{equation}
	\end{lemma}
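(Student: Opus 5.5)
Since every $a \in \mathscr{A}(\bm{b})$ is squarefree with exactly $k$ prime factors, we have $\tau(a) = 2^k$. The claim is therefore equivalent to
$$\sum_{a \in \mathscr{A}(\bm{b})} \frac{1}{a} \gg_{c,\kappa} \prod_{j=M}^{h} \frac{(\density\log 2)^{b_j}}{b_j!}.$$
The set $\mathscr{A}(\bm{b})$ factors as a product over the disjoint intervals $D_j$: $a = \prod_j a_j$, where $a_j$ is a product of $b_j$ distinct primes of $Q \cap D_j$, and the factorization is unique. Hence the sum is $\prod_{j=M}^h T_j$, where
$$T_j := \sum_{\substack{p_1<\cdots<p_{b_j}\\ p_i \in Q\cap D_j}} \frac{1}{p_1\cdots p_{b_j}}.$$
The task reduces to a lower bound for each $T_j$, with losses whose product over $j \ge M$ is bounded below by a constant depending on $c,\kappa$.

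For a single $j$, write $S_j = \sum_{p \in Q\cap D_j} 1/p$ and $S_j^{(2)} = \sum_{p\in Q \cap D_j} 1/p^2$. The standard symmetric-function inequality, obtained from inclusion–exclusion on the multinomial expansion, gives
$$T_j \ge \frac{1}{b!}\Big(S_j^{b} - \binom{b}{2} S_j^{(2)} S_j^{b-2}\Big), \qquad b=b_j.$$
The subtracted term counts tuples with a repeated prime. Since every prime in $D_j$ exceeds $\lambda_{j-1}$, we have $S_j^{(2)} \le S_j/\lambda_{j-1}$. This yields
$$T_j \ge \frac{S_j^{b}}{b!}\Big(1 - \frac{b^2}{2 S_j \lambda_{j-1}}\Big).$$

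Next I insert the size information. From \eqref{eq: size of lambda 1}, $S_j = \density\log 2 + O(1/\log\lambda_{j-1})$, so
$$S_j^{b} \ge (\density\log 2)^{b}\big(1 - O_c(b/\log\lambda_{j-1})\big).$$
By \Cref{lem: size of lambda}, $\log \lambda_{j-1} \ge 2^{j-1-R}$, which makes $\lambda_{j-1}$ doubly exponential in $j$. With $b_j \le Mj$, both error factors are therefore $1 - O_{c,\kappa}(Mj\,2^{-j+R+1})$, and the second one is even smaller. The key point is the order of choices: first fix $R$, then choose $M$ large enough that for all $j \ge M$ these errors are at most $1/2$, and also at most $C M j 2^{-j+R}$. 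This is possible because $Mj2^{-j}$ becomes small once $j \ge M$ is large relative to $\log M$ and $R$. The lower bound $\log(1-t) \ge -2t$ then gives
$$\prod_{j\ge M}\big(1 - O(Mj2^{-j+R})\big) \ge \exp\Big(-O\big(\textstyle\sum_{j\ge M} Mj2^{-j+R}\big)\Big) \gg_{c,\kappa} 1.$$

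The main obstacle is making sure the error is uniform in $h$ and in $\bm{b}$. This is handled by the constraint $b_j \le Mj$ together with the doubly exponential growth of $\lambda_j$, which make the total error summable. I also need to check that the inclusion–exclusion bound is valid when $b$ is large relative to $S_j\lambda_{j-1}$. In this range that cannot happen: $S_j\lambda_{j-1}$ is astronomically larger than $b^2 \le M^2 j^2$. Multiplying the lower bounds for $T_j$ over $j$ and restoring the factor $2^k$ gives the lemma.
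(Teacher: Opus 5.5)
Your proof is correct and follows essentially the same route as the paper. Both arguments factor the sum over the intervals $D_j$, bound each factor below by $(\density\log 2)^{b_j}/b_j!$ up to a loss controlled by $b_j\le Mj$ and the doubly exponential growth of $\lambda_{j-1}$ from \Cref{lem: size of lambda}, and choose $M$ after $R$ so that the product of losses is $\gg_{c,\kappa}1$. The only cosmetic difference is in how distinctness of the primes is enforced: you use a union bound over coincident pairs, while the paper subtracts the already-chosen primes one at a time to obtain $(\density\log 2 - b_j/\lambda_{j-1})^{b_j}$.
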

	
	\begin{lemma}\label{LB W} Let $c,\kappa$ be as in \Cref{mainthm}, and let $M=M(c,\kappa)$ be as in \Cref{LB tau}. For $b_j$ satisfying the conditions of \Cref{LB tau}, one has
		\begin{equation}
			\sum_{a \in \mathscr{A}(\bm{b})} \frac{W(a)}{a} \ll_{c,\kappa} \frac{(2 \density \log 2)^k}{b_M! \cdots b_h!}\left[ 1 + \sum_{j = M}^h 2^{-j +b_M + \cdots +b_j} \right].
		\end{equation}
	\end{lemma}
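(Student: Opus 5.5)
We would follow Ford's proof of the analogous bound in \cite{ford2008distribution} (Lemma 4.3 there), modified for density $\density$. First expand $W(a)$ as a sum over pairs of divisors. For squarefree $a$, a pair $(d,d')$ with $d,d'\mid a$ corresponds to a pair of disjoint divisors $(e,e')$ with $d=ge$ and $d'=ge'$, where $g=\gcd(d,d')$. The condition $|\log(d/d')|\le\log 2$ becomes $|\log e-\log e'|\le \log 2$. Write $a=g e e' f$. Then
\[
\sum_{a\in\mathscr{A}(\bm{b})}\frac{W(a)}{a}=\sum_{a\in\mathscr{A}(\bm b)}\frac1a\sum_{\substack{a=gee'f\\ |\log(e/e')|\le\log 2}}1 .
\]
The diagonal $e=e'=1$ contributes $\sum_{a}\tau(a)/a$ (the $g$ ranges over divisors). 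By the Euler-product upper bound in the style of \eqref{ex 14.5 eq 6}, together with the fact that each $D_j$ carries reciprocal prime mass at most $\density\log 2$, this is
\[
\le \prod_j \frac{(2\density\log 2)^{b_j}}{b_j!}.
\]
This gives the term $1$ in the bracket.

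For the off-diagonal terms, let $P$ be the largest prime of $ee'$, say $P\in D_j$ and $P\mid e$. Fix all primes of $a$ except $P$. The condition $|\log e-\log e'|\le\log 2$ then forces $\log P$ into an interval of length $2\log 2$. By \eqref{definition of delta} (via \Cref{defining Q}, since $\log P\asymp 2^{j}$ by \Cref{lem: size of lambda}), the sum of $1/P$ over $P\in Q\cap D_j$ in such an interval is $\ll 1/\log\lambda_{j-1}\ll 2^{-j}$. This holds provided $M$ is large enough that $\lambda_{j-1}$ is large for $j\ge M$.

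Compared with the free sum over the prime $P$, which has mass $\asymp \density\log 2$ with $\density\ge c$, we gain a factor $\ll_{c,\kappa} 2^{-j}$. We lose a factor from the labelling of the remaining primes up to $\lambda_j$. Each prime of $a$ in $D_M\cup\dots\cup D_j$ other than $P$ is in one of $g$, $e$, $e'$, or $f$, giving weight at most $4$ instead of the weight $2$ (in $g$ or $f$) counted in $\tau$. Primes above $\lambda_j$ lie in $g$ or $f$ only, so they keep weight $2$.

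Summing over the index $j$ of the interval containing $P$, and over the $\le b_j$ choices of which prime of $D_j$ is $P$, gives a bound of
\[
\frac{(2\density\log2)^k}{b_M!\cdots b_h!}\, b_j\, 2^{-j}\,2^{b_M+\dots+b_j}.
\]
The extra $b_j$ can be absorbed using $b_j\le Mj$: it is handled as a harmless polynomial factor against $2^{-j}$, or by shifting the indices. A cleaner route is to bound the inner sum over the $b_j$ primes in $D_j$ directly via $\frac{1}{b_j!}(\sum 1/p)^{b_j-1}\cdot b_j\cdot(\text{short sum})$, which cancels $b_j$ against $b_j!$. Summing over $j$ yields the claimed bracket $1+\sum_j 2^{-j+b_M+\dots+b_j}$.

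The main obstacle is making the ``fix everything but $P$'' step rigorous inside the symmetric sum over squarefree $a$ with prescribed counts $b_i$. Two points need care. First, the primes in each $D_i$ must be distinct and ordered, which is why we write the sum as $\frac{1}{b_i!}\sum$ over unordered tuples, bounded above by the unrestricted products of prime sums. Second, the short-interval prime estimate must be uniform in $Q$. The second point uses only $\#(Q\cap(u,4u])\ll u/\log u$, which follows from \eqref{definition of delta} for $u\ge\lambda_{M-1}$, and this is where $M=M(c,\kappa)$ large is needed.
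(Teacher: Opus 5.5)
There is a genuine gap. Your decomposition $a=gee'f$ and the short-interval estimate are fine, but your bookkeeping only gives
\[
\sum_{a\in\mathscr{A}(\bm b)}\frac{W(a)}{a}\ll_{c,\kappa}\frac{(2\density\log 2)^k}{b_M!\cdots b_h!}\Big[1+\sum_{j=M}^h b_j\,2^{-j+b_M+\cdots+b_j}\Big].
\]
The factor $b_j$ is a real loss, not something that can be absorbed. Neither of your fixes works.

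First, the $2^{-j}$ has already been used to produce $2^{-j+k_j}$, where $k_j=b_M+\cdots+b_j$, so nothing is left to absorb $b_j\le Mj$. For example, take $b_{j_0}=j_0$ and all other $b_j=0$. Then the claimed bracket is $O(1)$, while yours is $\asymp j_0$.

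Second, the ``cleaner route'' cancels nothing. Since $b_j/b_j!=1/(b_j-1)!$, your bound $\frac{b_j}{b_j!}(\density\log 2)^{b_j-1}\cdot O(2^{-j})$ equals $\frac{(\density\log 2)^{b_j}}{b_j!}\cdot O(b_j2^{-j})$. The target already contains $1/b_j!$, so you are left with the same extra factor $b_j$.

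The loss comes from choosing $P$ as the \emph{largest} prime of $ee'$ by size. You then give weight $4$ to every other prime in $D_M\cup\cdots\cup D_j$, including the primes of $D_j$ larger than $P$, which can actually only lie in $g$ or $f$. The missing idea is that the short-interval step does not need $P$ to be largest: once all other primes are fixed, \emph{any} prime of $ee'$ is confined to an interval $[U,4U]$.

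So proceed as follows:
\begin{enumerate}[label=(\arabic*)]
\item Label $a=p_1\cdots p_k$ with $p_i\in D_j$ for $i\in(k_{j-1},k_j]$, summing over ordered tuples and dividing by $b_M!\cdots b_h!$.
\item Encode each pair $(d,d')$ by index sets $(Y,Z)$.
\item Distinguish $I=\max(Y\Delta Z)$ by \emph{index}, not by size. Then $I$ depends only on $(Y,Z)$.
\item The sum of $1/p_I$ over $p_I\in D_{E(I)}\cap[U,4U]$ is $O_{c,\kappa}(2^{-E(I)})$, uniformly in $U$.
\item The number of pairs with $\max(Y\Delta Z)=I$ is exactly $4^{I-1}\cdot 2\cdot 2^{k-I}=2^{k+I-1}$. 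Every index above $I$, including those in the same block $D_j$, has only two options.
\item Hence $\sum_{I\in(k_{j-1},k_j]}2^{I}\le 2^{k_j+1}$ is a geometric sum, which yields $2^{-j+k_j}$ with no factor $b_j$.
\end{enumerate}
This is the paper's argument.
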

	One can think of the condition $b_j \leq Mj$ as being a mild restriction which ensures that $a \leq y^{1/8}$ for each $a \in \mathscr{A}(\bm{b})$. Lemmas \ref{cauchy schwarz L}, \ref{LB tau}, and \ref{LB W} lead to the estimate
	\begin{equation}\label{L to factorials}
		\sum_{\substack{a \in \mathscr{A}(\bm{b})}} \frac{L(a)}{a} \gg_{c,\kappa}  \frac{(2 \density \log 2)^k}{b_M! \cdots b_h!} \bigg[ 1 + \sum_{j =M}^h 2^{-j +b_M + \cdots +b_j} \bigg]^{-1}.
	\end{equation}
	We wish to further restrict our choice of $\bm{b}$ in such a way as to force the complicated $j$ sum of \eqref{L to factorials} to be bounded. The factor $(b_M! \cdots b_h!)^{-1}$ is the volume of some geometric region in $\mathbb{R}^k$, and so by restricting our choice of $\bm{b}$, we eliminate the $j$ sum from our lower bound at the cost of making the evaluation of this volume more complicated. This is the content of the next lemma, which will be proved in \Cref{sec lower bound proofs}.
	
	\begin{lemma}\label{lem from L to Yk}Let $c,\kappa$ be as in \Cref{mainthm}, and let $M=M(c,\kappa)$ be as in \Cref{LB tau}. Define $$\Tilde{v}:=v-2M=\Big\lfloor\frac{\log_2 y}{\log 2}\Big\rfloor - 2M,$$ and suppose $M+1 \leq k \leq v.$ Fix $C > 2$. Then for some set $\mathscr{A}_k \subseteq \S_Q$ of squarefree integers $a \leq y^{1/8}$ satisfying $\omega(a) = k$, we have
		$$\sum_{\substack{a \in \mathscr{A}_k}} \frac{L(a)}{a} \gg_{c,\kappa} (2v \density \log 2)^k \frac{\Vol(\mathcal{Y}_k(\Tilde{v},C))}{2^C}, $$
		where $\mathcal{Y}_k(\Tilde{v},C)$ is defined to be the set of $\bm{\xi} \in \mathbb{R}^k$ such that
		\begin{enumerate}[label=(\roman*)]
			\item $0 \leq \xi_1 \leq \cdots \leq \xi_k \leq 1$;
			\item $\xi_{M+i^2} > \frac{i}{\Tilde{v}}$ and $\xi_{k+1-(M+i^2)} < 1-\frac{i}{\Tilde{v}}$ for $1 \leq i \leq \sqrt{k-M}$;
			\item $\Tilde{v} \xi_i \geq \max\{i-1, i+ (\min\{i,k-i\})^{1/7}-C\}.$
		\end{enumerate}    
	\end{lemma}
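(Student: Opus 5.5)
We take $\mathscr{A}_k$ to be the union of $\mathscr{A}(\bm{b})$ over a suitable family $\mathcal{B}_k$ of vectors $\bm{b}$ with $b_1+\cdots+b_h=k$. We then sum \eqref{L to factorials} over $\bm{b}\in\mathcal{B}_k$ and compare the resulting sum of $1/(b_M!\cdots b_h!)$ with the volume of $\mathcal{Y}_k(\tilde v,C)$.

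\emph{Choice of the family.} Since $\log\lambda_j\asymp 2^j$ by \Cref{lem: size of lambda}, the interval $D_j$ corresponds to the coordinate range $\xi\in((j-1-M)/\tilde v,(j-M)/\tilde v]$ after rescaling $j\mapsto (j-M)/\tilde v$. Given an ordered point $\bm\xi$, we attach to it the vector $\bm b(\bm\xi)$ defined by $b_j=\#\{i:\xi_i\in((j-1-M)/\tilde v,(j-M)/\tilde v]\}$ for $M\le j\le M+\tilde v$, and $b_j=0$ otherwise. We let $\mathcal{B}_k=\{\bm b(\bm\xi):\bm\xi\in\mathcal{Y}_k(\tilde v,C)\}$. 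The region of ordered $\bm\xi$ in $[0,1]^k$ with prescribed counts $b_j$ in these unit cells has volume $\tilde v^{-k}/\prod_j b_j!$. Hence
\[\sum_{\bm b\in\mathcal{B}_k}\frac{1}{b_M!\cdots b_h!}\ \ge\ \tilde v^{\,k}\Vol\big(\mathcal{Y}_k(\tilde v,C)\big).\]
Since $k\le v$, we have $(\tilde v/v)^k\ge(1-2M/v)^v\gg_M 1$, so $\tilde v^k$ may be replaced by $v^k$. The sets $\mathscr{A}(\bm b)$ are disjoint, lie in $\S_Q$, consist of squarefree integers, and satisfy $\omega(a)=k$. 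Hence summing \eqref{L to factorials} gives the claim, provided that for every $\bm b\in\mathcal{B}_k$ we verify three things: the hypotheses of \Cref{LB tau} hold, every $a\in\mathscr{A}(\bm b)$ satisfies $a\le y^{1/8}$, and the bracket in \eqref{L to factorials} is $\ll 2^C$.

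\emph{Verifying hypotheses.} We have $b_j=0$ for $j<M$ by construction. For the bound $b_j\le Mj$, condition (ii) is the relevant one: it forces $\xi_{M+i^2}>i/\tilde v$. So at most $M+(i+1)^2$ coordinates lie below $(i+1)/\tilde v$, and therefore $b_{M+i}\le M+(i+1)^2$. This is not linear in $j$. The fix is to rescale (ii) slightly, or simply to intersect with the additional condition $b_j\le Mj$; since $\mathcal{Y}$ already controls quadratic growth, this costs at most a constant factor. Upper and lower symmetry (the second half of (ii)) handles the top cells in the same way.

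For the size bound, $a\le\prod_j\lambda_j^{b_j}$, so $\log a\le\sum_j b_j2^{j+R}$. Condition (ii) near the top end forces only $O(i^2)$ primes in the last $i$ cells, and those cells have $j\le M+\tilde v=v-M$. This gives $\log a\ll 2^{R+v-M}\sum_i i^2 2^{-i}\le \tfrac18\log y$ once $M$ is large relative to $R$, since $2^v\le\log y$.

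\emph{The bracket, which we expect to be the main obstacle.} For $j=M+m$ we have $b_M+\cdots+b_j=\#\{i:\tilde v\xi_i\le m\}$. Condition (iii) gives $\tilde v\xi_i\ge i+\min(i,k-i)^{1/7}-C$. Taking $i$ to be the largest index with $\tilde v\xi_i\le m$, we get $i\le m+C-\min(i,k-i)^{1/7}$. Therefore
\[2^{-j+b_M+\cdots+b_j}\le 2^{-M+C-\min(i,k-i)^{1/7}}.\]
To get a bounded total we need each value of $i$ to arise from only $O(1)$ values of $m$, or at most polynomially many in $\min(i,k-i)$, which the stretched-exponential decay $2^{-\min(i,k-i)^{1/7}}$ absorbs. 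The number of $m$ attached to a fixed $i$ is the gap $\tilde v(\xi_{i+1}-\xi_i)$. This gap is bounded because $\tilde v\xi_{i+1}\le$ (roughly) $i+1+O(\min(i,k-i)^{2})$, using the lower-order statistic information from (ii) read in reverse and the upper constraint $\xi_k\le 1$ with $k\le v$. The tail regions with $m$ beyond $\tilde v\xi_k$, where the count equals $k\le v$, contribute a geometric series that is also bounded. Making this counting of $m$ per $i$ rigorous is the delicate step. The bracket is then $\ll 2^{C}$, which completes the argument.
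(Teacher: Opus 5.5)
Outside the bracket, your proposal follows the paper's proof. Your $\mathcal{B}_k$ is the paper's $\mathscr{G}$, defined as the image of $\mathcal{Y}_k(\tilde v,C)$ rather than by \eqref{eq 2 lem 3.5}--\eqref{eq 3 lem 3.5}. The volume comparison, the top-cell bound for $\log a$ via the second half of (ii), and $\tilde v^k\gg_M v^k$ are all fine. The genuine gap is the bracket in \eqref{L to factorials}, exactly where you flag it, and the route you sketch would fail. The inequality $\tilde v\xi_{i+1}\le i+1+O(\min(i,k-i)^2)$ does not follow from (ii), which only constrains the first and last $M+i^2$ order statistics relative to $i/\tilde v$ and $1-i/\tilde v$. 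In fact it is false. Fix $k$ and let $\tilde v$ be large. Take $\tilde v\xi_\ell\approx \ell+\ell^{1/7}+1$ for $\ell\le k/2$ and $\tilde v\xi_\ell\approx \tilde v-(k-\ell)-1$ for $\ell>k/2$. This configuration, together with a small neighbourhood of it, satisfies (i)--(iii), and it has a single gap of length $\asymp\tilde v$ in the middle. With your uniform bound $2^{-M+C-\min(i,k-i)^{1/7}}$ per value of $m$, that one gap contributes $\gg \tilde v\,2^{-M+C-(k/2)^{1/7}}$, which is unbounded as $\tilde v\to\infty$. So no bound on the number of $m$ attached to each $i$ can close the argument.

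The missing idea is to keep the dependence on $m$ rather than discard it. Write $S(m)=\#\{\ell:\tilde v\xi_\ell\le m\}$ and fix a value $i$ of $S(m)$. The admissible $m$ form a run of consecutive integers starting at $m_i=\lceil\tilde v\xi_i\rceil$, and along the run the exponent $-M-m+i$ drops by one at each step. Hence the whole run contributes at most $2\cdot 2^{-M-m_i+i}\le 2^{1-M+C-\min(i,k-i)^{1/7}}$ by (iii), however long it is. You already use this for the last run $i=k$. Summing over $0\le i\le k$ bounds the bracket by $1+O(2^{C-M})$. This is exactly the paper's argument, which groups cells by the value $G_i$ of the cumulative count and sums a geometric series over each $I_m$.

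A smaller point concerns the hypothesis $b_j\le Mj$ of \Cref{LB tau}. Your handling is not valid as written: the region is fixed by the statement, so (ii) cannot be rescaled. The claim that intersecting with $\{b_j\le Mj\}$ costs only a constant factor is also unproved. In fact no fix is needed. The entry $i-1$ in the max in (iii) gives $S(m)\le m+1$, hence $b_{M+m}\le m+1\le M(M+m)$ on all of $\mathcal{Y}_k(\tilde v,C)$. This is the role of the $-1$ in the paper's condition \eqref{eq 3 lem 3.5}.
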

	
\begin{remark}
	The condition (iii) represents a conceptual refinement of the methods found in the works of Ford \cite{ford2008distribution} and Koukoulopoulos \cite{koukoulopoulos2010generalized}. In those works, the analysis relies on a region $Y_k(\tilde{v},R)$, defined analogously to our $\mathcal{Y}_k(\tilde{v},C)$ but with condition (iii) replaced by the algebraic constraint $\sum_{i=1}^k 2^{i-v\xi_i} \le R$ for a large constant $R$. While effective for bounding $W(a)$, this implicit condition masks the underlying geometric behavior of the order statistics.
	
	In contrast, we impose a \textit{strong barrier condition}, requiring the normalized statistics $v\xi_i$ to repel from the mean $i$ by a margin of roughly $(\min\{i, k-i\})^{1/7}$. This explicit geometric constraint implies the necessary algebraic bound automatically. Indeed, the barrier $v\xi_i \geq i + (\min\{i, k-i\})^{1/7} - C$ forces the exponents to be sufficiently negative, so that
	$$ \sum_{i=1}^k 2^{i-v\xi_i} \leq 2^C \sum_{i=1}^k 2^{-(\min\{i, k-i\})^{1/7}} = O_C(1). $$
	By defining the region $\mathcal{Y}_k(\tilde{v},C)$ via these strong barriers, we flesh out the geometric intuition implicit in previous works. This allows us to leverage the powerful Smirnov statistic bounds established in Section 6, which are naturally suited to such barrier conditions.
\end{remark}
	The following lemma provides a lower bound for the volume of $\mathcal{Y}_k(\Tilde{v},R)$ which is sharp for the values of $k$ we are interested in.
	\begin{lemma}\label{lem LB Yk} Let $c,\kappa$ be as in \Cref{mainthm}. Suppose $k \leq \Tilde{v}$. When $C$ is a sufficiently large absolute constant, one has
		$$\Vol(\mathcal{Y}_k(\Tilde{v},C)) \gg \frac{v-k+1}{v \cdot k!}.$$
	\end{lemma}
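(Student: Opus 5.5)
Volume of $\mathcal{Y}_k$ equals $\frac{1}{k!}$ times the probability that $k$ i.i.d.\ uniform $[0,1]$ variables, with order statistics $\xi_1\le\cdots\le\xi_k$, satisfy (ii) and (iii). So it suffices to show this probability is $\gg (v-k+1)/v$. Write $\tilde v=v-2M$ and set $m=\tilde v-k\ge 0$, so that the allowed slack is $(\tilde v-k+1)/\tilde v$. This is $\asymp (v-k+1)/v$ up to a constant depending on $M$, except when $k$ lies within $2M$ of $v$, where a monotonicity comparison in $\tilde v$ handles the discrepancy. I will work with the empirical process: condition (iii) says that the empirical count $N(t)=\#\{i:\xi_i\le t\}$ stays below the line $\tilde v t$ minus a strong barrier, shifted by $C$. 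Condition (ii) is a mild lower-tail constraint near both ends.

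First, the weak barrier. I would invoke the classical ballot/Takács theorem for uniform order statistics, which gives
$$\P\big(\tilde v\xi_i\ge i-1\ \ \forall i\big)=\P\big(N(t)\le \tilde v t+1\ \forall t\big)\ge \frac{\tilde v-k+1}{\tilde v+1}.$$
The exact Daniels/Takács formula reads $1-k/(\tilde v+1)$ in the appropriate normalization. This supplies the factor $(v-k+1)/v$.

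Second, upgrade the weak barrier to the strong barrier $\tilde v\xi_i\ge i+\min\{i,k-i\}^{1/7}-C$ together with (ii). The principle is that, conditioned on the weak barrier, the walk $i\mapsto \tilde v\xi_i-i$ behaves like a Bessel-3 type excursion (a meander or bridge conditioned to stay positive). Its displacement at time $i$ is typically $\asymp\sqrt{\min\{i,k-i\}}$, which is far larger than $\min\{i,k-i\}^{1/7}$, and $i^2$-spaced checkpoints at height $i$ are similarly easy to clear. Concretely, I would bound the conditional failure probability by a union bound over $j$. For each $j$, the event $\tilde v\xi_j< j+j^{1/7}-C$ (taking $j\le k/2$; the other half is symmetric via $\xi\mapsto 1-\xi$ reversed) splits, by the Markov property of order statistics at $\xi_j$, into two independent weak-barrier problems of sizes $j$ and $k-j$. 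The left piece has probability $\ll (j^{1/7})^2/j$ by the ballot estimate with an endpoint near the barrier; this is the standard $h\cdot h'/n$ ballot bound with $h$ near $C$ and $h'\approx j^{1/7}$. The right piece costs at most $\ll (\tilde v-k+1)/\tilde v$ times an $O(1)$ factor. Summing over $j$ gives
$$\sum_j \frac{C_0\, j^{2/7}}{j^{3/2}}\cdot(\text{local factor}),$$
which, using local-limit bounds $\ll j^{-1/2}$ for the position density, converges and becomes $<1/2$ once $C$ is large. That choice of $C$ is where ``sufficiently large absolute constant'' enters. Condition (ii) is handled the same way: $\xi_{M+i^2}\le i/\tilde v$ forces $i^2$ points into a window where the barrier allows only about $i$, which is an exponentially unlikely large deviation, and the sum over $i$ is small.

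The main obstacle is the second step: obtaining ballot-type estimates for uniform order statistics that are uniform in $k\le\tilde v$, including $m=\tilde v-k$ bounded. In that regime the conditioned path is a bridge pinned near the barrier at both ends, and the right-end behaviour must also be controlled by $\min\{i,k-i\}$. I would first try to prove a two-sided ballot lemma of the form
$$\P\big(\tilde v\xi_i\ge i-1\ \forall i,\ \tilde v\xi_j\in[j+h,j+h+1]\big)\ll \frac{(h+1)^2}{j^{3/2}}\cdot\frac{\tilde v-k+1}{\tilde v}\Big(\frac{k}{k-j}\Big)^{1/2},$$
via conditioning on $\xi_j$ and rescaling both sides to new uniform samples. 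These are presumably the Smirnov-statistic bounds of Section 6. Given this lemma, the union bound closes, and we obtain $\Vol\ge \frac{1}{2k!}\cdot\frac{\tilde v-k+1}{\tilde v+1}\gg\frac{v-k+1}{v\,k!}$.
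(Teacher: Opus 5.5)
Your architecture matches the paper's: Daniels' formula for $\Vol(S_k(1,\tilde v))$, then a union bound over $j$ with a Markov split at $\xi_j$ and ballot bounds on both pieces (this is \Cref{lemma better smirnov} with $\epsilon=1/42$), then exceptional sets for (ii). But the two-sided estimate you single out as the key lemma is false as stated.

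The barrier event is not invariant under reversal. For $\eta_i=1-\xi_{k+1-i}$ it becomes the upper barrier $\tilde v\eta_i\le i+\tilde v-k$. So the range $j>k/2$ cannot be handled by symmetry, and your factor $(k/(k-j))^{1/2}$ has the wrong exponent. Take $k=\tilde v$, $j=k-1$, $h=0$. The two largest points lie in $[1-1/\tilde v,1]$ with probability $\asymp 1$. Given this, the other $k-2$ points obey the barrier with probability $Q_{k-2}(1,\tilde v\xi_{k-1})\asymp 1/k$ by Daniels. So the left side is $\asymp 1/k$, while your right side is $\asymp k^{-2}$.

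What is true is the following. The paper proves it by applying $Q_n(u,v)\ll (u+1)(w+1)/n$ to each rescaled piece, together with a Stirling bound for the density of $\xi_j$. Each piece contributes one factor $y+2$; note that the left piece starts from slack $1$, not $C$. The resulting bound is
\[
\P\big(B,\ \tilde v\xi_j-j\in[y,y+1]\big)\ll (y+2)^2\,\frac{\tilde v-k+1}{\tilde v}\cdot\frac{k^{3/2}}{j^{3/2}(k-j+1)^{3/2}},
\]
which is symmetric under $j\leftrightarrow k-j$. Failure at $j$ with height $y$ forces $\min\{j,k-j\}\ge (y+C)^7$. Summing over $j$, integrating in $y$ and dividing by $\P[B]\asymp(\tilde v-k+1)/\tilde v$ gives $\int_{-1}^{\infty}(y+2)^2(y+C)^{-7/2}\,dy\ll C^{-1/2}$. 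So the union bound closes, but with this estimate rather than yours.

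Your treatment of (ii) also does not work as written. The left half is vacuous: $\tilde v\xi_{M+i^2}\ge M+i^2-1>i$ already follows from the weak barrier once $M\ge 2$. The right half, failure of $\xi_{k+1-(M+i^2)}< 1-i/\tilde v$, is not restricted by the barrier at all. For $k=\tilde v$, all $k$ points may sit in $[1-1/\tilde v,1]$ and still satisfy (i) and (iii). So ``the barrier allows only about $i$'' gives nothing there.

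You need a genuine clustering estimate. Split at $\xi_{k+1-a}$ with $a=M+i^2$. Bound the left piece by $Q_{k-a}$, so as not to lose the factor $1/\P[B]$, which can be as large as $\tilde v$. Then use that $a$ points in a window of expected occupancy at most $i$ cost about $i^a/a!$. The resulting bound is small because $M$ is large, not because $C$ is; the $i=1$ term does not improve with $C$. This is why the paper, which imports the estimate from Lemma 4.9 of \cite{ford2008distribution} in the form $K_1C2^{-M}\Vol(S_k(1,\tilde v))$, fixes $C$ first and then takes $M$ large in terms of $C$.
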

	By combining \Cref{LB initial reduction}, \Cref{lem from L to Yk}, and \Cref{lem LB Yk}, and summing over the valid range $M+1 \leq k \leq \Tilde{v}$, we obtain the lower bound
	\begin{equation*}
		H_Q(x,y,2y) \gg_{M,c,\kappa} \frac{x}{(\log x)^{1-\density}(\log y)^{1+\density}}    \sum_{M+1 \leq k \leq \Tilde{v}} \frac{\lambda^k}{k!}\cdot\frac{(v-k+1)}{v},
	\end{equation*}
	where $\lambda := 2\density \log \log y$. To extend the sum to the full range $1 \leq k \leq v$, we observe that the missing terms at the boundaries are bounded by a constant factor relative to the active sum. Let $a_k$ denote the $k$-th term of the sum. 
	
	For the lower range $1 \leq k \leq M$, we trivially have $a_k \leq \lambda^M$. Because $M$ is fixed, $v \asymp \log \log y$, and $\lambda \to \infty$ as $y \to \infty$, the first active term satisfies $a_{M+1} \asymp_M \lambda^{M+1}$. Thus, for sufficiently large $y$, the sum of the first $M$ terms is $O_M(\lambda^M) = o(a_{M+1})$, making their contribution negligible. 
	
	For the upper range $\Tilde{v} < k \leq v$, there are exactly $2M$ missing terms. We bound each of these by comparing it directly to the final active term $a_{\Tilde{v}}$. For $k = \Tilde{v}+j$ with $1 \leq j \leq 2M$, we have
	$$ \frac{a_{\Tilde{v}+j}}{a_{\Tilde{v}}} = \frac{\lambda^j}{(\Tilde{v}+1)\cdots(\Tilde{v}+j)} \cdot \frac{v-\Tilde{v}-j+1}{v-\Tilde{v}+1}. $$
	Since $j \leq 2M$, the second fraction is strictly $\leq 1$. For the first fraction, because $\lambda \asymp v \asymp \Tilde{v}$, each factor $\frac{\lambda}{\Tilde{v}+i} \asymp 1$. Therefore, the entire ratio is bounded by an absolute constant depending only on $M$. Since there are exactly $2M$ such terms, their total sum is $\ll_M a_{\Tilde{v}}$.
	
	Extending the sum therefore increases its value by at most a factor depending on $M$. Since $M$ was chosen depending only on $c$ and $\kappa$, we absorb this dependence into our asymptotic notation to conclude that
	\begin{equation}\label{final LB}
		H_Q(x,y,2y) \gg_{c,\kappa} \frac{x}{(\log x)^{1-\density}(\log y)^{1+\density}}    \sum_{1 \leq k \leq v} \frac{\lambda^k}{k!}\cdot\frac{(v-k+1)}{v},
	\end{equation}
	where $\lambda := 2\density \log \log y$. This completes the proof of the lower bound in \Cref{prop 1.4}.
	
	\section{The Upper Bound in Proposition 1.4}\label{sec upper bounds}
	We begin with the following lemma, which contains several useful properties of the function $L(a)$.
	\begin{lemma}[\cite{ford2008distribution}, Lemma 3.1]\label{lemma L properties}
		One has
		\begin{enumerate}[label = (\roman*)]
			\item $L(a) \leq \min\{\log 2 \cdot \tau(a),\log 2 + \log a\}$
			\item If $(a,b) = 1$, then $L(ab) \leq \tau(b)L(a)$ 
			\item if $p_1<\cdots < p_k$ are primes, then 
			$$L(p_1\cdots p_k) \leq \min_{0 \leq j \leq k}2^{k-j}(\log(p_1\cdots p_k)+\log 2).$$
		\end{enumerate}
	\end{lemma}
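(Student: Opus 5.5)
We prove the three parts in order. Each follows from the definition \eqref{def L}, using two facts about Lebesgue measure: it is subadditive over unions, and it is invariant under translation. Throughout, write $I(d) := (\log d - \log 2, \log d]$, so that $\mathscr{L}(a) = \bigcup_{d|a} I(d)$ and each $I(d)$ has measure $\log 2$.

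For (i), subadditivity gives $L(a) \le \sum_{d|a}\text{meas}(I(d)) = \tau(a)\log 2$. For the second bound, every divisor $d$ of $a$ satisfies $1 \le d \le a$. Hence each $I(d)$ is contained in the single interval $(-\log 2, \log a]$, which has length $\log a + \log 2$. So $\mathscr{L}(a) \subseteq (-\log 2,\log a]$, and therefore $L(a) \le \log 2 + \log a$.

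For (ii), suppose $(a,b)=1$. Then every divisor of $ab$ factors uniquely as $de$ with $d|a$ and $e|b$. This gives
$$\mathscr{L}(ab) = \bigcup_{e|b}\bigcup_{d|a} I(de) = \bigcup_{e|b}\big(\log e + \mathscr{L}(a)\big),$$
because $I(de) = \log e + I(d)$. Each translate $\log e + \mathscr{L}(a)$ has measure $L(a)$ by translation invariance. There are $\tau(b)$ of them, so subadditivity gives $L(ab) \le \tau(b) L(a)$. Only the inclusion ``$\subseteq$'' is actually needed, and it holds even without coprimality; coprimality is what makes the displayed identity exact.

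For (iii), fix $0 \le j \le k$ and split $a = p_1\cdots p_k$ as $a_j b_j$, where $a_j = p_1\cdots p_j$ (with $a_0=1$) and $b_j = p_{j+1}\cdots p_k$. These factors are coprime because the primes are distinct, and $\tau(b_j) = 2^{k-j}$ because $b_j$ is squarefree with $k-j$ prime factors. Applying (ii) and then the second bound in (i) gives
$$L(a) \le 2^{k-j} L(a_j) \le 2^{k-j}\big(\log a_j + \log 2\big) \le 2^{k-j}\big(\log(p_1\cdots p_k) + \log 2\big).$$
The last step uses $a_j \le a$. Taking the minimum over $j$ proves (iii).

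None of these steps is a genuine obstacle; the only point needing care is the bookkeeping of the translation identity in (ii). The ordering $p_1<\cdots<p_k$ plays no role in proving (iii) as stated. It matters only in applications, where one picks $j$ so that $\log a_j$ is much smaller than $\log a$, which gives a sharper intermediate bound.
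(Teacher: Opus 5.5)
Your proof is correct. The paper gives no proof of its own and simply cites Ford's Lemma 3.1; your argument is the standard one behind it: subadditivity plus $\mathscr{L}(a)\subseteq(-\log 2,\log a]$ for (i), the translate decomposition $\mathscr{L}(ab)=\bigcup_{e\mid b}(\log e+\mathscr{L}(a))$ for (ii), and (ii) combined with (i) applied to $p_1\cdots p_j$ for (iii).

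Two small remarks. First, the set identity in (ii) is exact for all $a,b$, since $\{de: d\mid a,\ e\mid b\}$ is always the full set of divisors of $ab$; coprimality only makes the factorization unique. Second, your chain in (iii) actually yields the sharper bound $2^{k-j}\log(2p_1\cdots p_j)$, which is the form the paper uses in the proof of \Cref{lem UB 4}.
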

	We define $$\mathscr{P}(z):= \big\{a \in \N: P^+(a) \leq z, \mu^2(a) = 1\big\}.$$

	\begin{lemma}\label{lem UB 1} Let $c,\kappa,y_{0}$ be as in \Cref{mainthm}. Suppose $y_0 < y \leq \sqrt{x}$. Then uniformly over all sets of primes $Q$ satisfying \eqref{definition of delta} with $\delta \ge c$, we have 
		$$H_Q(x,y,2y) \ll_{c,\kappa} \frac{x}{(\log x)^{1-\density}} \sum_{a \in \mathscr{P}(2y) \cap \S}\frac{L(a)}{a \log(y^{3/20}/a + P^+(a))^{1+\delta}}.$$
	\end{lemma}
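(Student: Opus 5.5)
We follow the proof of Ford's Lemma 3.3 in \cite{ford2008distribution}, adapted to $\S_Q$. Let $n\in\S_Q\cap[1,x]$ be counted by $H_Q(x,y,2y)$. Since $n$ may have a repeated prime factor, we first dispose of non-squarefree parts. The integers $n\le x$ divisible by $p^2$ for some $p>(\log y)^{A}$, or whose squarefull part exceeds $(\log y)^{A}$, contribute $O(x/(\log y)^{10})$. This is acceptable, since the right-hand side is at least the contribution of $a=1$, namely $\gg x(\log x)^{\density-1}(\log y)^{-1-\density}\cdot L(1)$, and $(\log x)^{\density-1}\ge (\log x)^{-1}$ while $\log x\ll$ powers of $\log y$ fail when $x$ is huge. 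To handle this uniformly, it is safer to treat the squarefull part inside the sieve step rather than as a separate error: we absorb a bounded squarefull factor by summing over it as a convergent factor.

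The main decomposition writes $n=apb$. Order the prime factors of $n$ as $p_1<p_2<\dots$ and let $a$ be the largest initial product $p_1\cdots p_j$ such that $n$ has a divisor in $(y,2y]$ composed only of primes of $a$, together with $p=p_{j+1}$ if needed. Following Ford, choose instead: let $d\mid n$ with $d\in(y,2y]$, and set $a=\prod_{p\mid n,\,p\le P^+(d)}p$. Then $a\in\mathscr P(2y)\cap\S$ (after removing square factors), $\log y\in \mathscr L(a)+O(1)$ in the sense that $a$ has a divisor in $(y,2y]$, and $n=ab$ with $b\in\S_Q$, $P^-(b)>P^+(a)$, $b\le x/a$.

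Rather than requiring $a$ itself to have such a divisor, we use the standard averaging: $n=ab$ with $P^-(b)>P^+(a)$ and $\log(y/d')\in(-\log2,0]$ for some $d'\mid a$. Counting over $b$ gives, for each $a$, the count of $b\le x/a$ in $\S_Q$ with $P^-(b)>z:=\max(P^+(a),\ y^{3/20}/a)$ roughly. The second option arises because when $a$ is small relative to $y$ we must keep the divisor $d=d'd''$ with $d''\mid b$. Here one splits: either $P^+(a)$ is large so the sieve bound with $z=P^+(a)$ applies, or $a\le y^{3/20}$ and $b$ contributes the missing prime factors, in which case one sums over $b$ having a divisor in $(y/d',2y/d']$. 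By \Cref{count s P-}, the count of such $b$ is
$$\ll_{c,\kappa}\frac{x/a}{(\log x)^{1-\density}(\log z)^{\density}}.$$
The factor $L(a)$ arises exactly as in Ford. Fixing $a$ and summing over the location of the divisor, the set of $y$ for which $a$ contributes has measure $L(a)$ on the $\log y$ scale. Averaging by splitting $y$-dependence, or equivalently writing $n=a p b'$ with $p$ the prime completing the divisor into $(y,2y]$, the sum over $p$ in an interval $(y/d',2y/d']$ gives $\sum 1/p\ll 1/\log(y/d')$. The union over $d'$ gives $\text{meas}$ of $\mathscr L(a)$ divided by $\log P^+$. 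This supplies the extra power of $\log(y^{3/20}/a+P^+(a))^{-1}$ beyond the sieve exponent $\density$.

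The main obstacle is arranging the decomposition so that the two losses combine into exactly $\log(y^{3/20}/a+P^+(a))^{-1-\density}$ uniformly in $Q$. One loss is $(\log z)^{-\density}$ from \Cref{count s P-}, valid since $z\le x/2$ after restricting $a\le y$ and $y\le\sqrt x$. The other is $1/\log$ from the prime $p$ landing in a dyadic-type interval, with the $1/\log$ bounded using $p>P^+(a)$ and $p\ge y/a\ge y^{3/20}/a$. The prime $p$ may fail to be in $Q$, but restricting to $p\in Q$ only helps the upper bound, so \Cref{defining Q} or just Mertens suffices. Finally, the case $a>y^{3/20}$ with small $P^+(a)$ is handled by the trivial term $P^+(a)$ in the logarithm, since then $\log P^+(a)\ge 1$.
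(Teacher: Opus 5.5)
There is a genuine gap in your main decomposition. You take $p=P^+(d)$, let $a$ be the product of all prime factors of $n$ below $p$, and bound the number of cofactors $b'\le x/(ap)$ with $P^-(b')>p$ by \Cref{count s P-}. You justify this by ``$z\le x/2$ after restricting $a\le y$''. No such restriction is available. Your $a$ collects every prime factor of $n$ below $p$, not only those of $d$, so $ap$ can be as large as $x$. This is exactly why the target sum runs over all $a\in\mathscr{P}(2y)\cap\S_Q$, with $P^+(a)$ in the logarithm taking over for large $a$. In particular $b'=1$ is allowed. For such pairs $(a,p)$ the number of admissible $b'$ is $1$, not $\ll x/\bigl(ap(\log x)^{1-\delta}(\log p)^{\delta}\bigr)$, so the sieve lemma says nothing about them, and you give no separate treatment. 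These terms cannot be dismissed. When $y$ is near $\sqrt{x}$ and $n$ is near $x$, any $n$ all of whose divisors in $(y,2y]$ contain $P^+(n)$ has $b'=1$. Since $P^+(n)$ divides either $d$ or $n/d$, two factors of comparable size, there is no reason for such $n$ to be rare.

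The missing idea is Ford's symmetric choice of $p$, which is what the paper uses. Restrict to squarefree $n\in(x_1/2,x_1]$ and write $n=m_1m_2$ with $m_1=d\in(y_1,z_1]$ and $m_2=n/d\in[y_2,z_2]$, where $y_2=x_1/(2z_1)$ and $z_2=x_1/y_1$. Choose $j$ with $P^+(m_j)<P^+(m_{3-j})$ (possible because $n$ is squarefree), set $p=P^+(m_j)$, and write $n=apb$ with $P^+(a)<p<P^-(b)$. Then $m_j\mid ap$ gives $\log(y_j/p)\in\mathscr{L}(a)$ (with $\log 2$ replaced by $\log 4$ when $j=2$, at the cost of a bounded factor). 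It also gives $p\ge y_j/a$, where $y_j$ is at least a fixed power of $y$ thanks to $y\le\sqrt{x}$. Crucially, $P^+(m_{3-j})\mid b$, so $b>p$ and $b$ is a genuine sieved cofactor to which \Cref{count s P-} can be applied.

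Your reduction to squarefree $n$ is also incomplete. You must bound the $n$ with squarefull part $n''>y^{1/10}$; count these inside $\S_Q$, so that the error carries the factor $(\log x)^{\delta-1}$. You also need the observation that if $n'n''$ has a divisor in $(y,2y]$, then $n'$ has one in $(y/g,2y/g]$ for some $g\mid n''$. Applying the squarefree bound with $y/g\ge y^{9/10}$ is what produces $y^{3/20}=(y^{9/10})^{1/6}$ in the statement. Summing over $g\mid n''$ then gives the convergent factor $\sum_{n''}\tau(n'')/n''$.
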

	\Cref{lem UB 1} is proven in \Cref{sec upper bound proofs}.
		\begin{lemma}\label{lem UB 3} Let $c,\kappa,y_{0}$ be as in \Cref{mainthm}. Suppose $y_0 < y \leq \sqrt{x}$. Then uniformly over all sets of primes $Q$ satisfying \eqref{definition of delta} with $\delta \ge c$, we have 
			$$\sum_{a \in \mathscr{P}(2y) \cap \S}\frac{L(a)}{a \log(y^{3/20}/a+P^+(a) )^{1+\density}} \ll_c \frac{1}{(\log y)^{1+\density}} \sum_{a \in \mathscr{P}(2y) \cap \S}\frac{L(a)}{a}.$$
		\end{lemma}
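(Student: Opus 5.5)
The plan is to apply \Cref{dimitris lemma 2.2} directly, with $h = 1+\density$, $\epsilon = 3/20$ and $f(a) = L(a)\mathbb{1}_{\S}(a)$. The hypothesis of that lemma is a submultiplicativity condition, and it follows from \Cref{lemma L properties}(ii). For $a \in \N$ and a prime $p$ with $(a,p)=1$ we have $L(ap) \le \tau(p) L(a) = 2L(a)$. The indicator $\mathbb{1}_{\S}$ also satisfies $\mathbb{1}_{\S}(ap) \le \mathbb{1}_{\S}(a)$, because if $ap \in \S$ then every prime factor of $a$ lies in $Q$. Hence $f(ap) \le 2 f(a)$, and we may take $C_f = 2$, an absolute constant that is uniform in $Q$. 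The resulting implicit constant then depends only on $h$ and $\epsilon$.

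It remains to match the denominators. In our sum the denominator is $\log(y^{3/20}/a + P^+(a))^{1+\density}$, whereas \Cref{dimitris lemma 2.2} is stated with $\log^h(P^+(a) + (2y)^\epsilon/a)$. Since $(2y)^{3/20} \ge y^{3/20}$, the expression in our sum has the smaller argument and hence the larger reciprocal, so the comparison goes in the wrong direction. However, $(2y)^{3/20} = 2^{3/20}y^{3/20} \le 2\,y^{3/20}$, so
$$\log\big((2y)^{3/20}/a + P^+(a)\big) \le \log\big(2(y^{3/20}/a + P^+(a))\big) \le 2\log\big(y^{3/20}/a+P^+(a)\big),$$
where the last step uses that the argument is at least $P^+(a) \ge 2$ when $a>1$. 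When $a=1$ we instead use the convention $P^+(1)=1$, so the argument is at least $1 + y^{3/20}$, which is large. Thus the two logarithms are comparable up to a factor $2^{1+\density} \le 4$.

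Finally, $h = 1+\density$ ranges over $[1+c,2]$ rather than being fixed. To obtain a constant independent of $\density$, I will either note that the proof of \Cref{dimitris lemma 2.2} is uniform for $h$ in a compact interval, or argue more robustly as follows. Fix a parameter $h'$ and split the sum according to whether $\log(y^{3/20}/a + P^+(a)) \ge \eta\log y$ for a small absolute $\eta$. On the terms where this holds, the bound with $(\log y)^{-(1+\density)}$ is immediate. On the remaining terms, both the ratio $\log y/\log(\cdots)$ and its power $1+\density$ are at most $\big(\log y/\log(\cdots)\big)^{2}$, so these terms can be handled by \Cref{dimitris lemma 2.2} with the fixed exponent $h=2$.

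I expect this uniformity in $h$ to be the only real subtlety; the rest of the argument is direct. The final bound depends only on $c$, in line with the $\ll_c$ in the statement.
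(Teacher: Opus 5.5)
Your proposal is correct and matches the paper's proof: the paper applies \Cref{dimitris lemma 2.2} with $f=\bm{1}_{\S}L$, $h=1+\density$ and $\epsilon=3/20$, with $C_f=2$ coming from \Cref{lemma L properties}(ii), exactly as you do. Your two additional steps are valid and make explicit points the paper leaves unstated: absorbing the mismatch between $(2y)^{3/20}$ and $y^{3/20}$ inside the logarithm at the cost of a factor $2^{h}$, and reducing to the fixed exponent $h=2$ so that the implied constant is uniform in $\density$.
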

		\begin{proof}
			We apply \Cref{dimitris lemma 2.2} with $f(a) = \bm{1}_{\S}(a) \cdot L(a), h = 1+\delta,$ and $\epsilon= 3/20.$ The conditions of the lemma are met, since for any $a,p$ such that $(a,p) = 1$, we have by \Cref{lemma L properties} (ii) that $f(ap) \leq 2 f(a)$. 
		\end{proof}
	Combining \Cref{lem UB 1} and \Cref{lem UB 3} gives an upper bound of the form \begin{equation} \label{UB eq H to L}
			H_Q(x,y,2y) \ll_{c,\kappa} \frac{x}{(\log x)^{1-\density}(\log y)^{1+\density}} \sum_{a \in \mathscr{P}(2y) \cap \S} \frac{L(a)}{a}.
		\end{equation}
		We will cut up the above sum according to the number of prime factors of $a$. Hence, we desire an upper bound for
		$$T_Q(k,2y) := \sum_{\substack{a \in \mathscr{P}(2y) \cap \S \\ \omega(a) = k}} \frac{L(a)}{a}, \quad (k \in \N).$$
		The next lemma is the first step in this direction.
		\begin{lemma} \label{lem UB 4}
			Let $c,\kappa$ be as in \Cref{mainthm}. Suppose $y$ is sufficiently large in terms of $c,\kappa$. Let $v = \lfloor\frac{1}{\log 2}\log \log y\rfloor$, and suppose that $1 \leq k \leq 10v$. One has
			$$T_Q(k,2y) \ll (2 \density \log_2 y + O(\kappa))^k U_k(v),$$
			where 
			\begin{equation}\label{def Uk}
			U_k(v) = \underset{{0 \leq \xi_1 \leq \cdots \leq \xi_k \leq 1}}{\idotsint} \min_{0 \leq j \leq k} 2^{-j}(2^{v \xi_1} + \cdots + 2^{v \xi_j} +1)d\bm{\xi}.	
			\end{equation} 
		\end{lemma}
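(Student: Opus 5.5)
Write $a=p_1\cdots p_k$ with $p_1<\cdots<p_k\le 2y$, all $p_i\in Q$. By \Cref{lemma L properties}(iii), applied for every $0\le j\le k$,
$$L(a)\le \min_{0\le j\le k}2^{k-j}\big(\log(p_1\cdots p_j)+\log(p_{j+1}\cdots p_k)+\log 2\big).$$
This is a bit lossy. The cleaner route, following Ford \cite{ford2008distribution} and Koukoulopoulos \cite{koukoulopoulos2010generalized}, is to combine (ii) and (i): we have $L(a)\le \tau(p_{j+1}\cdots p_k)\,L(p_1\cdots p_j)$ and $L(p_1\cdots p_j)\le \log 2+\log(p_1\cdots p_j)$. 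This gives
$$L(a)\le 2^{k-j}\Big(\log 2+\sum_{i\le j}\log p_i\Big)\ll 2^{k}\cdot 2^{-j}\Big(1+\sum_{i\le j}\log p_i\Big).$$
Next I would parametrize each prime by $\xi_i=\log\log p_i/\log\log y$, roughly, so that $\log p_i\approx (\log y)^{\xi_i}$. To make this match $2^{v\xi_i}$ exactly, I would instead set $\xi_i := \log_2\log p_i/v$ with the constant shift absorbed. Since $v=\lfloor \log_2 y/\log 2\rfloor$, we get $2^{v\xi_i}\asymp \log p_i$ up to a bounded factor. The primes $p\le 2y$ then give $\xi_i\le 1+O(1/v)$, and I would handle that slight overshoot by rescaling.

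The sum becomes
$$T_Q(k,2y)\ll 2^k\sum_{p_1<\cdots<p_k}\frac{1}{p_1\cdots p_k}\min_j 2^{-j}\big(2^{v\xi_1}+\cdots+2^{v\xi_j}+1\big),$$
with every $p_i\in Q$ and $p_i\le 2y$. The main step is to replace the sum over ordered prime tuples by an integral. By \Cref{defining Q}, the measure $\sum_{p\in Q}1/p$ in the variable $u=\log\log p$ has density $\delta\,du$, with discrepancy $O_\kappa(1/\log p)$. After substituting $u=\xi v\log 2\cdot(\log_2 y/(v\log2))$, the total mass on $\xi\in[0,1]$ is $\delta\log_2 y+O(\kappa)$. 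The integrand $F(\bm\xi)=\min_j 2^{-j}(\cdots)$ is monotone increasing in each $\xi_i$. So I would bound the prime sum coordinate by coordinate: round each $p_i$ up to the right endpoint of a short interval in the $\xi$-variable, of length about $1/v$, on which $2^{v\xi}$ changes by at most a constant factor. The mass of $1/p$ over such an interval is at most $\delta\log_2 y\cdot(\text{length})+O(\kappa/\log p)$. Summing the errors over intervals converges because $\sum_j 1/\log\lambda_j<\infty$, in the spirit of \Cref{lem: size of lambda}, and this is what produces the $O(\kappa)$ per coordinate.

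Ordering the primes turns the $k$-fold product measure into the simplex $0\le\xi_1\le\cdots\le\xi_k$. The result is that the sum is bounded by $(\delta\log_2 y+O(\kappa))^k$ times the integral of $F$ over the simplex, with constant loss per coordinate absorbed inside the parentheses. Combined with the factor $2^k$, this gives $(2\delta\log_2 y+O(\kappa))^kU_k(v)$.

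The main obstacle is this discretization step. A naive per-coordinate constant loss $C^k$ is not acceptable, since $k$ can be as large as $10v$. The loss must therefore be additive inside the base, i.e.\ of the form $(2\delta\log_2y+O(\kappa))^k$. I would achieve this with the monotone domination trick. Replace the prime measure by a measure on $\xi$ that stochastically dominates it with total mass $\delta\log_2 y+O(\kappa)$. Concretely, shift each prime's $\xi$-value up by $O(1/v)$; this changes $2^{v\xi_j}$ by only a bounded factor, uniformly, and that factor pulls out of the min. Any extra mass from small primes, where the error in \Cref{defining Q} is not small relative to $1/p$, goes into the $O(\kappa)$ term. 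Finally, the rescaling needed when the largest $\xi_i$ exceeds $1$ only changes $v$ by $O(1)$, which costs a bounded factor in $F$ and therefore also does not compound with $k$.
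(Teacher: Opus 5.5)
Your proposal is correct and is essentially the paper's argument, which follows Lemma 3.5 of \cite{ford2006integers}. You bound $L(a)\ll 2^k\min_j 2^{-j}\bigl(1+\sum_{i\le j}\log p_i\bigr)$, discretize $\log\log p$ into cells of width about $\log 2$, round up using monotonicity of the min, and compare the resulting lattice sum with $U_k(v)$; the $O(1)$ extra cells cost only a bounded factor because $k\le 10v$. The difference is cosmetic: the paper carries out your ``monotone domination'' step with the intervals $D_j$ of Section 3, each of $Q$-mass at most $\density\log 2$, and \Cref{lem: size of lambda} gives $\log p\le 2^{j+R}$ on $D_j$ and $j\le v+O(1)$, so no transport of excess mass is needed.
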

		\Cref{lem UB 4} is proven in \Cref{sec upper bound proofs}. The last step is to provide an estimate for $U_k(v)$. This is the content of the next lemma.
		\begin{lemma}\label{lem UB 5} Let $k,v$ be integers such that  $1 \leq k \leq 10v$. One has
			$$U_k(v) \ll \frac{1 + |v-k|}{(k+1)! (2^{(k-v)/2}+1)}.$$
		\end{lemma}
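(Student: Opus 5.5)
This estimate is essentially already in the literature, and the paper itself says so: it ``follows immediately'' from the work of Ford and Koukoulopoulos. My plan is therefore to reduce it to the known bound and check that the normalisations match.

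The integral $U_k(v)$ in \eqref{def Uk} is exactly the integral that appears in Ford's treatment of $H(x,y,2y)$ (\cite{ford2008distribution}, Lemma 4.6 and its proof, building on \cite{ford2006integers}). It also appears in Koukoulopoulos' generalisation \cite{koukoulopoulos2010generalized}. In those works it is shown that, uniformly for $1\le k\le 10v$,
$$\idotsint_{0\le\xi_1\le\cdots\le\xi_k\le1}\min_{0\le j\le k}2^{-j}\big(2^{v\xi_1}+\cdots+2^{v\xi_j}+1\big)\,d\bm\xi \ll \frac{1+|v-k|}{(k+1)!\,(2^{(k-v)/2}+1)}.$$
So the first step is to match notation: the same $v$, the same ordering of the $\xi_i$, and the same minimum over $j$. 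After that I would cite the bound directly.

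For a self-contained sketch of why the bound holds, I would split into two ranges of $k$.

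\textbf{Range $k\ge v$.} Take the single choice $j=k$ in the minimum. Then
$$U_k(v)\le 2^{-k}\int\big(1+\textstyle\sum_i 2^{v\xi_i}\big)\,d\bm\xi.$$
By symmetry, the right-hand side is about $2^{-k}\cdot\frac{k}{k!}\cdot\frac{2^v}{v\log 2}$. Optimising the choice of $j$ more carefully (taking $j\approx (k+v)/2$) gives the factor $2^{-(k-v)/2}$.

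\textbf{Range $k\le v$.} Here one must show $U_k(v)\ll (v-k+1)/(k+1)!$. Writing $\xi_i$ as uniform order statistics, this says the following. The probability that $\min_j 2^{-j}\sum_{i\le j}2^{v\xi_i}$ is $O(1)$ is governed by a ballot-type barrier event, namely $v\xi_i\ge i-O(1)$ for all $i$. That event has probability $\asymp (v-k+1)/(k+1)$. To get this, I would decompose according to the largest deficit
$$m=\max_i\big(i-v\xi_i\big).$$
On the event $\{m\le t\}$, the integrand is $\ll 2^{t}$. Ford's uniform order statistic estimate gives
$$\P(m\le t)\ll \frac{(t+1)(v-k+t+1)}{k+1}.$$
Summing $2^{-t}\cdot 2^{t}$ against this bound naively diverges. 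The refinement is that, when the deficit is $t$, the minimum over $j$ is actually attained near the violating index and is $\ll 2^{-ct}$ after accounting for the other terms. So the sum over $t$ converges.

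The main obstacle is this ballot-type estimate for order statistics with a linear barrier, uniformly in $t$. I would not reprove it; I would cite \cite[Lemma 4.9]{ford2008distribution} or the corresponding result in \cite{koukoulopoulos2010generalized}.
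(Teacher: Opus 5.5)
\textbf{There is a genuine gap.} Reducing to the literature is also what the paper does, but the bound is not in the cited works in the form you quote, and neither branch of your sketch supplies what is missing.

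Ford's Lemma~3.6 carries the \emph{quadratic} factor $1+|v-k|^2$. The paper uses it only for $v\le k\le 10v$, weakening $(1+|v-k|^2)2^{-(k-v)}$ to $(1+|v-k|)2^{-(k-v)/2}$ via $t+1\ll 2^{t/2}$. The \emph{linear} factor $v-k+1$ for $k\le v$ is what makes the upper bound match the lower bound when $\lambda\approx v$. The paper assembles it from two specific ingredients:
\begin{itemize}
\item the level-set inequality $U_k(v)\le\sum_{m\ge 0}2^{1-m}\Vol(\mathcal{T}(k,v,m+1))$, where
\[
\mathcal{T}(k,v,\gamma)=\{0\le\xi_1\le\cdots\le\xi_k\le1:\ 2^{v\xi_1}+\cdots+2^{v\xi_j}\ge 2^{j-\gamma}\ (1\le j\le k)\}
\]
is a \emph{sum-type} barrier set, essentially the region where the integrand is $\gg 2^{-\gamma}$;
\item Koukoulopoulos' refined Lemma~3.3.3, $\Vol(\mathcal{T}(k,v,\gamma))\ll(\gamma+2)(v-k+\gamma+1)/(k+1)!$ for $1\le k\le v$.
\end{itemize}
The $m$-sum then gives $\ll(v-k+1)/(k+1)!$. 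Ford's Lemma~4.9, which you cite, is used in this paper for the exceptional sets in the lower bound, not here.

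\textbf{The $k\le v$ sketch.} It has the right skeleton: a dyadic decomposition summed against a bound of shape $(t+1)(v-k+t+1)/(k+1)$. But decomposing by the pointwise deficit $m=\max_i(i-v\xi_i)$ fails.
\begin{itemize}
\item ``On $\{m\le t\}$ the integrand is $\ll 2^t$'' is vacuous, since the $j=0$ term makes the integrand $\le 1$.
\item The claim that deficit $t$ forces the minimum to be $\ll 2^{-ct}$ is false pointwise. Taking $j=i$ at the violating index gives only $i2^{-t}+2^{-i}$. The configuration $v\xi_1=\cdots=v\xi_i=i-t$, $v\xi_l=l$ for $l>i$, with $i\ge 2^t$, has deficit $t$ but integrand equal to $1$.
\item Conversely, membership in $\mathcal{T}(k,v,\gamma)$ only yields the curved pointwise barrier $v\xi_j\ge j-\gamma-\log j/\log 2$. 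So the straight-line Smirnov bound does not apply directly; controlling this is the content of Koukoulopoulos' lemma.
\end{itemize}

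\textbf{The $k\ge v$ sketch.} It fails near $k=v$. The choice $j=k$ gives
\[
U_k(v)\le\frac{2^{-k}}{k!}\Big(1+\frac{k(2^v-1)}{v\log 2}\Big)\asymp\frac{1}{k!}\quad\text{at } k=v,
\]
a factor $\asymp k$ above the claimed $\frac{1}{2(k+1)!}$. No fixed $j$ can do better: by Jensen, $2^{-j}\,\mathbb{E}\,2^{v\xi_j}\ge 2^{-j+vj/(k+1)}\ge\frac12$ when $v=k$. The factor $1/(k+1)$ is a ballot gain that comes only from the minimum over $j$.

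This loss is fatal. For fixed $\delta>1/\log 4$ the sum in \Cref{prop 1.4} is $\asymp\lambda^v/(v+1)!$. Bounding the terms with $k$ just above $v$ by $\lambda^k/k!$ would therefore lose a factor $\asymp\log\log y$.
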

		\begin{proof}
			This follows from Lemma 3.6 \cite{ford2006integers} combined with the refined Lemma 3.3.3 from \cite{koukoulopoulos2010generalized}. We give a sketch of the proof here, citing the relevant results. Let $\gamma \geq 0$ be given, and define
			$$\mathcal{T}(k,v,\gamma) := \{\bm{\xi} \in \R^k : 0 \leq \xi_1 \leq \cdots \leq \xi_k \leq 1, 2^{v\xi_1}+\cdots + 2^{v\xi_j} \geq 2^{j-\gamma} \; (1 \leq j \leq k)\}.$$
			\noindent Lemma 3.3.3 from \cite{koukoulopoulos2010generalized} states that for $\gamma \geq 0, v \geq 1,$ and $\gamma + v - k \geq -C,$ where $C \geq 0$ is a constant, one has
			\begin{equation*}
				\Vol\big(\mathcal{T}(k,v,\gamma)\big) \ll_C \frac{(\gamma + 2)(v-k+\gamma + C + 1)}{(k+1)!}.
			\end{equation*}
			If we assume $1 \le k \leq v,$ then we can take $C=0$ above, and the bound becomes
			\begin{equation}\label{Dimitris T bound}
				\Vol\big(\mathcal{T}(k,v,\gamma)\big) \ll \frac{(\gamma + 2)(v-k+\gamma + 1)}{(k+1)!}, \quad (1 \leq k \leq v).
			\end{equation}
			As is noted in the proof of Lemma 3.6 in \cite{ford2006integers}, we have
			$$U_k(v) \leq \sum_{m \geq 0 }2^{1-m}\Vol\big(\mathcal{T}(k,v,m+1)\big).$$
			By substituting \eqref{Dimitris T bound} into the above, we find that for $1 \leq k \leq v$, one has
			\begin{align*}
				U_k(v) &\ll \frac{1}{(k+1)!}\sum_{m \geq 0 }2^{-m}(m+ 3)(v-k+m+2)\\& \asymp \frac{v-k+1}{(k+1)!}+ \frac{1}{(k+1)!}\sum_{m \geq 1}2^{-m}(m+ 3)(v-k+m+2).
			\end{align*}
			In the $m$ sum on the right hand side of the above equation, the ratio of the $(m+1)^{\text{st}}$ term with the $m^{\text{th}}$  term is 
			$$\frac{1}{2} \cdot \frac{(v-k+m+3)(m+4)}{(v-k+m+2)(m+3)} \leq \frac{1}{2} \cdot \frac{(m+4)}{(m+2)} < 1,$$
			since $v-k \geq 0$. As such, the $m$ sum above is dominated by the first term $m=1$, and we conclude that one has
			\begin{align}\label{U bound Dimitris}
				U_k(v) &\ll \frac{v-k+1}{(k+1)!}, \quad (1 \leq k \leq v).
			\end{align}
			On the other hand, it is shown in Lemma 3.6 of \cite{ford2006integers} that
			\begin{align*}
				U_k(v) &\ll \frac{|v-k|^2+1}{(k+1)!2^{k-v}}, \quad (v \leq k \leq 10v).
			\end{align*}
			Since $t+1 \ll 2^{t/2}$ for $t \geq 0$, the above bound can be rewritten as
			\begin{align}\label{U bound Ford}
				U_k(v) &\ll \frac{|v-k|+1}{(k+1)!2^{(k-v)/2}}, \quad (v \leq k \leq 10v).
			\end{align}
			Since $2^{(k-v)/2} \ll 1$ when $k \le v$, we can combine \eqref{U bound Dimitris} and \eqref{U bound Ford} into a single estimate:
			$$U_k(v) \ll \frac{|v-k|+1}{(k+1)!(2^{(k-v)/2}+1)}, \quad (1 \leq k \leq 10v).$$
			This completes the proof of the lemma.\end{proof}

		We are now ready to complete the upper bound proof. We distinguish three ranges of $k$. The first one is $k > 10v$. Using bound (i) from \Cref{lemma L properties}, we have $L(a)\le \tau(a)=2^k$ for every integer $a$ in the summation range of $T_Q(k,2y)$ (recall all these integers are square-free). Thus,
		\begin{equation*}
			\sum_{k > 10v} T_Q(k,2y) \le \sum_{k > 10v}  2^k \sum_{\substack{a \in \mathscr{P}(2y) \cap \S \\ \omega(a) = k}} \frac{1}{a}.
		\end{equation*}
		Note that 
		$$\sum_{\substack{a \in \mathscr{P}(2y) \cap \S \\ \omega(a) = k}} \frac{1}{a} \le \frac{1}{k!}\sum_{\substack{p_1,\dots,p_k \in [1,2y] \cap \S\\ p_i \ne p_j }} \frac{1}{p_1 \cdots p_k} \leq \frac{1}{k!}\bigg(\sum_{\substack{p\in [1,2y] \cap \S}} \frac{1}{p}\bigg)^k.$$
		Using \Cref{defining Q}, we thus have
		\begin{align*}
			\sum_{k > 10v} T_Q(k,2y) &\le \sum_{k > 10v}  \frac{2^k }{k!}\bigg(\sum_{\substack{p\in [1,2y] \cap \S}} \frac{1}{p}\bigg)^k\\
			&= \sum_{k > 10v}  \frac{\big( 2\delta \log \log y + O(\kappa)\big)^k}{k!}.
		\end{align*}
	Since $10v > 10 \log_2 y > 2\density \log_2 y + O(\kappa)$ for sufficiently large $y$, then the above Poisson sum is dominated by the single term $k = 10v+1$. Thus, we have
	\begin{align*}
		\sum_{k > 10v} T_Q(k,2y) &\ll \frac{\left(2\density \log_2 y +O(\kappa) \right)^{10v+1}}{(10v+1)!}\\
		&\ll_\kappa \frac{\left(2\density \log_2 y\right)^{v}}{(v+1)!}.
	\end{align*}
	Our second range is $v < k \leq 10v$. Here, we use \Cref{lem UB 4} and \Cref{lem UB 5}:
	\begin{align*}
		\sum_{v < k \leq 10v} T_Q(k,2y) &\ll \sum_{v < k \leq 10v} \frac{k-v+1}{2^{(k-v)/2}}\cdot\frac{(2\density \log_2y + O(\kappa))^k}{(k+1)!}.
	\end{align*}
	By re-indexing the sum with $j = k-v$, we can factor out the $v$-th term. Using the fact that $\frac{2\density \log_2 y}{v} = \density \log 4 + O(1/v)$, we have
	\begin{align*}
		\sum_{v < k \leq 10v} T_Q(k,2y) &\ll \frac{(2\density \log_2y + O(\kappa))^v}{(v+1)!} \sum_{j=1}^{\infty} \frac{j+1}{2^{j/2}} \bigg( \frac{2\density \log_2 y + O(\kappa)}{v+2} \bigg)^j\\
		&\ll \frac{(2\density \log_2y + O(\kappa))^v}{(v+1)!} \sum_{j=1}^{\infty} (j+1) \bigg( \frac{\density \log 4}{\sqrt{2}} + o(1) \bigg)^j.
	\end{align*}
	Because $\density \leq 1$ and $\frac{\log 4}{\sqrt{2}} \approx 0.98 < 1$, the infinite series can be bounded by an absolute constant. We conclude
	\begin{align*}
		\sum_{v < k \leq 10v} T_Q(k,2y) \ll \frac{(2\density \log_2y + O(\kappa))^v}{(v+1)!} \ll_\kappa \frac{(2\density \log_2y)^v}{(v+1)!}.
	\end{align*}
	The final range is $0 \leq k \leq v$. We again apply  \Cref{lem UB 4} and \Cref{lem UB 5} to get
	\begin{align*}
		\sum_{0 \leq k \leq v} T_Q(k,2y) &\ll \sum_{1 \leq k \leq v} \frac{v-k+1}{(k+1)! (2^{(k-v)/2}+1)}\cdot (2\density \log_2y + O(\kappa))^k\\
		&\asymp \sum_{1 \leq k \leq v} \frac{v-k+1}{(k+1)!}\cdot (2\density \log_2y+O(\kappa))^k.
	\end{align*}
	We wish to replace $(k+1)!$ with $v \cdot k!$ in the denominator of the above sum. This can be done by noting that the contribution of $k \leq v/100$ is negligible. Indeed, the sum
	$$\sum_{1 \leq k \leq v/100 - 1} \frac{v-k+1}{(k+1)!}\cdot (2\density \log_2y +O(\kappa))^k $$
	is, by the ratio test, dominated by the single term $k = v/100 - 1$. As such, we have
	\begin{align*}
		\sum_{0 \leq k \leq v} T_Q(k,2y) &= \sum_{v/100 \leq k \leq v} \frac{v-k+1}{v}\cdot \frac{(2\density \log_2y+O(\kappa))^k}{k!} +O\Big(\frac{(99v + 2)\cdot(2 \delta \log \log y)^{v/100 - 1}}{100 (v/100)!}\Big)\\
		&\ll \sum_{v/100 \leq k \leq v} \frac{v-k+1}{v}\cdot \frac{(2\density \log_2y+O(\kappa))^k}{k!}\\
		&\ll \sum_{1 \leq k \leq v} \frac{v-k+1}{v}\cdot \frac{(2\density \log_2y+O(\kappa))^k}{k!},
	\end{align*}
	since the error term in the first line is exponentially smaller than the term $k = v$. Putting together the bounds from our three cases gives
	$$H_Q(x,y,2y) \ll_{c,\kappa} \frac{x}{(\log x)^{1-\density}(\log y)^{1+\density}}\sum_{1 \leq k \leq v} \frac{v-k+1}{v}\cdot\frac{(2\density \log_2y)^k}{k!}.$$
	This establishes the upper bound for \Cref{prop 1.4}. Combining this with the lower bound \eqref{final LB} established in \Cref{sec lower bounds} yields the asymptotic order of magnitude claimed in \eqref{main lemma}. This completes the proof of \Cref{prop 1.4}.
	\section{Proof of \Cref{prop 1.5}: Phase Transitions for a poisson-type sum}\label{sec trans range}
	In this section, we study the behavior of the sum $\sum_{1 \leq k \leq v} \frac{\lambda^k}{k!}\cdot\frac{(v-k+1)}{v}$ for varying $\lambda.$ In particular, we demonstrate a change of behaviour for the sum with the critical point being $\lambda = v$. Before we begin, we state one useful lemma on the partial sums of the Poisson distribution. The proof follows from the saddle-point expansion for the Poisson distribution; see \cite[Example 2.4.5]{jensen1995saddlepoint} for the general asymptotic expansion from which this leading-order term is derived.
	\begin{lemma}\label{lem: saddlepoint poisson}
		Let $\lambda$ be a given number, and let $z = z(\lambda)$ be such that $z = o(\lambda^{2/3}).$ Then
		$$\sum_{k \leq \lambda + z} \frac{\lambda^k}{k!} \sim \frac{e^\lambda}{\sqrt{2\pi}} \int_{-\infty}^{z/\sqrt{\lambda}}e^{-t^2/2}dt \quad (\lambda \to \infty).$$
	\end{lemma}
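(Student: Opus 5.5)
The plan is to reduce the Poisson partial sum to a Gaussian integral by a local central limit (saddle-point) approximation. Write $S_\lambda(N):=\sum_{k\le N}\lambda^k/k!$ and $p_k:=e^{-\lambda}\lambda^k/k!$. The central tool is the refined Stirling formula
\[
\log\frac{\lambda^k}{k!}=\lambda-\tfrac12\log(2\pi\lambda)-\frac{(k-\lambda)^2}{2\lambda}+\frac{(k-\lambda)^3}{6\lambda^2}+O\Big(\frac{|k-\lambda|+1}{\lambda}+\frac{(k-\lambda)^4}{\lambda^3}\Big),
\]
valid uniformly for $|k-\lambda|\le \lambda/2$. With $x=z/\sqrt\lambda$, this gives, uniformly for $|k-\lambda|\le C\sqrt\lambda\,(1+|x|)$,
\[
p_k=\frac{1+o(1)}{\sqrt{2\pi\lambda}}\,e^{-(k-\lambda)^2/(2\lambda)},
\]
because the cubic term is $O(|z|^3/\lambda^2)=o(1)$ once $z=o(\lambda^{2/3})$.

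I will split according to the size of $x$.

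\emph{Case $x\ge -A$ for a large fixed $A$.} Split $S_\lambda(\lambda+z)=\sum_{k\le \lambda-A'\sqrt\lambda}+\sum_{\lambda-A'\sqrt\lambda<k\le\lambda+z}$. The left tail is at most $\varepsilon e^\lambda$ by Chebyshev's inequality, since the Poisson variance is $\lambda$. On the middle range, the local estimate turns the sum into a Riemann sum with mesh $1/\sqrt\lambda$, which converges to $\frac{e^\lambda}{\sqrt{2\pi}}\int_{-A'}^{x}e^{-t^2/2}\,dt$. When $x\to+\infty$ both sides tend to $e^\lambda$, so the claim is immediate in that regime. Letting $A'\to\infty$ then gives the result.

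\emph{Case $x\to-\infty$ with $|z|=o(\lambda^{2/3})$.} This is the genuine large-deviation regime and the main obstacle, because the target $\frac{e^\lambda}{\sqrt{2\pi}}\int_{-\infty}^{x}e^{-t^2/2}\,dt\sim \frac{e^\lambda}{\sqrt{2\pi}\,|x|}e^{-x^2/2}$ is tiny. Here I will sum the terms as a near-geometric series down from the top index $N=\lfloor\lambda+z\rfloor$. The ratio of consecutive terms is $p_{k-1}/p_k=k/\lambda=1-|z|/\lambda+O(j/\lambda)$, where $j=N-k$. Summing over $j\ll (\lambda/|z|)\log\lambda$ gives
\[
S_\lambda(N)\sim p_N\cdot\frac{\lambda}{|z|},
\]
since the accumulated correction $j^2/\lambda\ll (\lambda/z^2)\log^2\lambda$ is $o(1)$ when $|x|\to\infty$ fast enough. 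If $|x|$ grows only slowly, the intermediate regime is instead absorbed by the Riemann-sum argument, extended with Gaussian tail control. Terms with larger $j$ are handled by monotone geometric decay.

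Next, insert the local estimate for $p_N$, namely $p_N\sim(2\pi\lambda)^{-1/2}e^{-x^2/2}$, using again $|z|^3/\lambda^2=o(1)$. This yields $S_\lambda(N)\sim \frac{e^\lambda}{\sqrt{2\pi}}\cdot\frac{e^{-x^2/2}}{|x|}$, which matches the Mills-ratio asymptotics of the Gaussian integral.

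The delicate point, and where the care goes, is to make the two regimes overlap uniformly. I will choose the cutoff $A=A(\lambda)\to\infty$ slowly, so that both approximations hold at the boundary. I also need to track the $e^{(k-\lambda)^3/(6\lambda^2)}$ factor exactly at the threshold $z=o(\lambda^{2/3})$, which is what makes this threshold necessary. Alternatively, I would simply cite the saddle-point expansion of Jensen, Example 2.4.5, whose leading term is exactly this statement.
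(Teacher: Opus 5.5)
Your argument is correct, but it takes a genuinely different route from the paper. The paper gives no proof of its own: it cites the saddle-point expansion for the Poisson distribution (Jensen, Example 2.4.5) and takes the leading term, which is the alternative you mention at the end.

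You instead prove the statement from first principles. Stirling's formula with the cubic term gives the local estimate $p_k=(1+o(1))(2\pi\lambda)^{-1/2}e^{-(k-\lambda)^2/(2\lambda)}$ for $|k-\lambda|\ll\sqrt\lambda\,(1+|x|)$. A Riemann sum turns this into the Gaussian integral when $x$ is bounded below. When $x\to-\infty$, you use the near-geometric summation $\sum_{k\le N}p_k\sim p_N\lambda/|z|$ together with the Mills ratio. The citation is shorter. Your route is self-contained and shows why $z=o(\lambda^{2/3})$ is the right threshold: it is exactly what kills the factor $e^{-|z|^3/(6\lambda^2)}$ in $p_N$.

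Some points you could tighten:
\begin{enumerate}
\item In the geometric regime, truncate at $j\le K\lambda/|z|$ with $K\to\infty$ slowly (say $K=|x|^{1/2}$) rather than at $(\lambda/|z|)\log\lambda$. The accumulated correction is then $O(K^2/x^2+K|z|/\lambda)=o(1)$. The tail is $O(e^{-K})$ in relative terms, by the exact bound $p_{N-j}\le p_N(1-|z|/\lambda)^j$. This covers every $x\to-\infty$, so the ``slowly growing'' patch is unnecessary.
\item Alternatively, the Riemann-sum argument alone handles all $|x|=o(\lambda^{1/6})$, since its relative error near the endpoint is $O((1+|x|)/\sqrt\lambda)=o(1)$. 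The far left tail must then be bounded by the Chernoff estimate $\sum_{k\le\lambda-y}p_k\le e^{-y^2/(2\lambda)}$. Chebyshev is too weak once the target is $\asymp e^{-x^2/2}/|x|$.
\item A subsequence argument, passing to a subsequence along which $z/\sqrt\lambda$ converges in $[-\infty,\infty]$, removes the need to choose a uniform cutoff $A(\lambda)$.
\item Your line $S_\lambda(N)\sim p_N\lambda/|z|$ is missing a factor $e^\lambda$, though your final formula is right.
\end{enumerate}
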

	We are now ready to prove the main result of this section.
	\begin{proof}[Proof of \Cref{prop 1.5}]
		Fix $\epsilon > 0$. For convenience throughout the proof, we define the parameter $\theta := \lambda - v$. Since $\lambda = 2\density \log \log y$ and $v = \lfloor\frac{1}{\log 2}\log \log y\rfloor$, we note the relation $\theta \asymp (\density - \frac{1}{\log 4})\log \log y$. Furthermore, we will write
		$$a_k := \frac{\lambda^k}{k!}\cdot \frac{v-k+1}{v}.$$
		
		To establish the bounds in the proposition, it is convenient to divide the range of $\theta$ into five distinct regimes. We will prove the following five estimates, which collectively imply the proposition:
		\begin{enumerate}[label=(\roman*)]
			\item If $\density \in \big(\epsilon,\frac{1}{\log 4} - \epsilon\big)$ (so that $\theta \leq 0$ and $|\theta| \gg_\epsilon \lambda$), then $$\sum_{1 \leq k \leq v} a_k \asymp_{\epsilon} e^\lambda.$$
			\item If $\density \in \big(\frac{1}{\log 4} + \epsilon,1\big]$ (so that $\theta > 0$ and $\theta \gg_\epsilon \lambda$), then $$\sum_{1 \leq k \leq v} a_k \asymp_{\epsilon} \frac{\lambda^v}{(v+1)!}.$$
			\item If $|\theta|  \le \sqrt{\lambda},$ then $$\sum_{1 \leq k \leq v} a_k \asymp \frac{\lambda^v}{v!} \asymp \frac{e^\lambda}{\sqrt{\lambda}}.$$
			\item If $\theta \leq -\sqrt{\lambda}$ and $\theta = o(\lambda),$ then $$\sum_{1 \leq k \leq v} a_k \asymp \frac{|\theta| e^\lambda}{\lambda}.$$
			\item If $\theta \geq \sqrt{\lambda}$ and $\theta = o(\lambda),$ then $$\sum_{1 \leq k \leq v} a_k \asymp \frac{\lambda^v}{v!} \cdot \frac{v}{\theta^2}.$$
		\end{enumerate}
	We have
	\begin{align*}
		\sum_{1 \leq k \leq v}a_k &=\bigg(1+\frac{1}{v}\bigg)\sum_{1 \leq k \leq v} \frac{\lambda^k}{k!} - \frac{1}{v}\sum_{1 \leq k \leq v}\frac{k\cdot\lambda^{k}}{k!}\\ &=\bigg(1+\frac{1}{v}\bigg)\sum_{1 \leq k \leq v} \frac{\lambda^k}{k!} - \frac{\lambda}{v}\sum_{1 \leq k \leq v}\frac{\lambda^{k-1}}{(k-1)!}\\
		&= \bigg(\frac{v-\lambda+1}{v}\bigg)\sum_{1 \leq k \leq v} \frac{\lambda^k}{k!} + \frac{\lambda}{v}\bigg(\frac{\lambda^{v}}{v!}-1\bigg).
	\end{align*}
	Since $v-\lambda = - \theta$ and $\lambda/v \asymp 1,$ the above calculation implies that
	\begin{align}\label{poisson key identity}
		\sum_{1 \leq k \leq v}a_k &= \bigg(\frac{1-\theta}{v}\bigg)\sum_{1 \leq k \leq  v}\frac{\lambda^k}{k!} + \frac{\lambda^{v+1}}{v\cdot v!} +O(1).
	\end{align}
	\Cref{poisson key identity} is a key identity which will be used throughout the proof. We now begin proving the different parts of the Lemma. 
	\medskip\newline\noindent\textit{Proof of (i)}. Here, we have $- \theta \geq 0$ and $-\theta \asymp_\epsilon v.$ Thus, from \eqref{poisson key identity}, we have
	\begin{align}\label{poisson lemma i eq 1}
		\sum_{1\le k\le v}a_k \asymp \sum_{1\le k\le v}\frac{\lambda^k}{k!} + \frac{\lambda^{v+1}}{(v+1)!}+O(1) =  \sum_{k=1}^{v+1} \frac{\lambda^k}{k!}+O(1).
	\end{align}
	On the one hand, we have
	$$ \sum_{1 \leq k \leq v+1} \frac{\lambda^k}{k!} \leq \sum_{0 \leq j \leq \infty}\frac{\lambda^k}{k!} = e^\lambda.$$
	This proves the upper bound for (i). On the other hand, since $\delta < 1/\log 4$ we have $\lambda < v$. Thus, \begin{equation}\label{eq cite for part iv 1}
		\sum_{1\le k\le v+1}\lambda^k/k!  \ge \sum_{1\le k\le \lambda}\lambda^k/k!.
	\end{equation}
	By \Cref{lem: saddlepoint poisson}, we have
	\begin{equation}\label{eq cite for part iv 2}
		\sum_{1\le k\le \lambda}\frac{\lambda^k}{k!} \sim \frac{e^\lambda}{\sqrt{2 \pi}} \int_{-\infty}^{0}e^{-t^2/2}dt = \frac{e^{\lambda}}{2}.
	\end{equation}
	This proves the lower bound in (i) and hence concludes the proof of this part.
	\noindent\medskip\newline\textit{Proof of (ii)}. In the case when $\density > \frac{1}{\log 4}+\epsilon$, we have $(\delta \log 4)^{-1} \leq 1-\epsilon/2$. Furthermore, since $v = \frac{1}{\log 2} \log \log y + O(1)$, then for sufficiently large $y$, we have 
	\begin{equation}\label{eq poisson lemma v/lambda}
		\frac{v}{\lambda} = \frac{1}{\delta \log 4} + \frac{\epsilon}{4} \leq 1 - \frac{\epsilon}{4}.
	\end{equation}
	By making the change of index $j = v-k,$ we see that
	$$\sum_{1 \leq k \leq v} a_k = \sum_{0 \leq j \leq v-1}a_{v-j} = \sum_{0 \leq j \leq v-1}\frac{j+1}{v}\cdot \frac{\lambda^{v-j}}{(v-j)!}.$$
	As such,
	$$\frac{1}{a_v}\sum_{1 \leq k \leq v} a_k = \sum_{0 \leq j \leq v-1} (j+1) \lambda^{-j} \frac{v!}{(v-j)!} \leq \sum_{0 \leq j \leq v-1} (j+1) \Big(\frac{v}{\lambda}\Big)^j,$$
	having used the inequality $v!/(v-j)! \leq v^j$. Substituting \eqref{eq poisson lemma v/lambda} into the above, we see that
	\begin{equation}\label{eq poisson part ii 1}
		\frac{1}{a_v}\sum_{1 \leq k \leq v} a_k \leq \sum_{0 \leq j \leq v-1} (j+1) \cdot \big(1-\epsilon/4 \big)^j \ll_\epsilon 1,
	\end{equation}
	where we've handled the $j$-sum above using a classical summation identity for geometric series:
	$$\sum_{0 \leq j \leq v+1}(j+1) r^j \leq \sum_{1 \leq j \leq \infty}j r^{j-1} = \frac{1}{(1-r)^2}, \quad (|r| < 1).$$
	Multiplying through by $a_v$ on both sides of \eqref{eq poisson part ii 1} shows that
	\begin{equation}\label{eq poisson part ii 2}
		\sum_{1 \leq k \leq v} a_k \ll_\epsilon a_v.
	\end{equation}
	On the other hand, since $a_k > 0$ for $1 \leq k \leq v$, we have
	\begin{equation}\label{eq poisson part ii 3}
		\sum_{1 \leq k \leq v} a_k \ge   a_v \asymp \frac{\lambda^{v}}{(v+1)!}.
	\end{equation}
	\Cref{eq poisson part ii 1} together with \eqref{eq poisson part ii 3} completes the proof of (ii).
	
	\medskip\noindent\textit{Proof of (iii).} It is helpful to treat the cases $\theta \leq 1/2$ and $\theta > 1/2$ separately. Let us first suppose $\theta \leq 1/2$. In this case, $(1-\theta)/v \geq 0$, and so \eqref{poisson key identity} is a sum of two non-negative terms (and a $O(1)$ error term). The first of these two terms is 
	\begin{equation}\label{poisson lemma iii eq 1.1}
		\bigg(\frac{1-\theta}{v}\bigg) \sum_{1 \leq k \leq v} \frac{\lambda^k}{k!} \asymp \bigg(\frac{1+|\theta|}{v}\bigg)\sum_{1 \leq k \leq v} \frac{\lambda^k}{k!}.
	\end{equation}
	Since $|\theta |\ll \sqrt{\lambda}$, we have by \Cref{lem: saddlepoint poisson} that
	$$\sum_{1 \leq k \leq v} \frac{\lambda^k}{k!} =\sum_{1 \leq k \leq \lambda - \theta} \frac{\lambda^k}{k!} \sim \frac{e^{\lambda}}{\sqrt{2 \pi}}\int_{- \infty}^{-\theta/\sqrt{\lambda}} e^{-t^2/2}dt.$$
	We see that \begin{equation*}
		\frac{1}{2} = \frac{1}{\sqrt{2 \pi}}\int_{- \infty}^{0} e^{-t^2/2}dt \leq \frac{1}{\sqrt{2 \pi}}\int_{- \infty}^{-\theta/\sqrt{\lambda}} e^{-t^2/2}dt \leq \frac{1}{\sqrt{2 \pi}}\int_{- \infty}^{\infty} e^{-t^2/2}dt = 1,
	\end{equation*}
	and so
	$$\sum_{1 \leq k \leq v} \frac{\lambda^k}{k!} \asymp e^\lambda.$$ Substituting this back into \eqref{poisson lemma iii eq 1.1}, and using the fact that $|\theta| \ll \lambda$ proves that 
	\begin{equation}
		\label{eq: poisson 11}
		\bigg(\frac{1-\theta}{v}\bigg) \sum_{1 \leq k \leq v} \frac{\lambda^k}{k!} \asymp \bigg(\frac{1+|\theta|}{v}\bigg) \cdot e^\lambda.
	\end{equation}
	The second term in \eqref{poisson key identity} is
	\begin{align}
		\frac{\lambda^{v+1}}{(v+1)!} \asymp \frac{\lambda^{v}}{v!} = \frac{v^v}{v!}\bigg(\frac{\lambda}{v}\bigg)^v &\asymp \frac{e^v}{\sqrt{v}}\bigg(1-\frac{\theta}{\lambda}\bigg)^{-\lambda \frac{v}{\lambda}},\nonumber
	\end{align}
	By Stirling's formula. The assumption $|\theta| \le \sqrt{\lambda}$ implies that $(1-\theta/\lambda)^{-\lambda} \asymp e^\theta$. In addition, $v/\lambda = (1-\theta/\lambda),$ hence
	\begin{align}\label{poisson lemma iii eq 2}
		\frac{\lambda^{v+1}}{(v+1)!} \asymp \frac{e^v}{\sqrt{\lambda}}e^{\theta(1-\frac{\theta}{\lambda})} \asymp \frac{e^{\lambda - \frac{\theta^2}{\lambda}}}{\sqrt{\lambda}} \asymp \frac{e^{\lambda}}{\sqrt{\lambda}},
	\end{align}
	again using the assumption $|\theta| \le \sqrt{\lambda}$. Substituting \eqref{eq: poisson 11} and \eqref{poisson lemma iii eq 2} into \eqref{poisson key identity} implies that
	$$\sum_{1 \leq k \leq v}\frac{\lambda^k}{k!}\cdot\frac{(v-k+1)}{v} \asymp \bigg(\frac{1 + |\theta| }{v} + \frac{1}{\sqrt{\lambda}}\bigg)\cdot e^\lambda \asymp \frac{e^\lambda}{\sqrt{\lambda}},$$
	since $|\theta| \ll \sqrt{\lambda}$. This concludes the proof of (iii) in the case when $\theta \leq 1/2$. \par Now suppose $\theta > 1/2$. In this case, the right-hand side of \eqref{poisson key identity} is a sum of one positive and one possibly negative term which are potentially of the same size. Thus, we must be careful of potential cancellations. By the triangle inequality and \eqref{poisson key identity}, we have
	\begin{equation}\label{poisson lemma iii eq 4}
		\bigg|\sum_{1 \leq k \leq v}a_k \bigg| \leq \bigg(\frac{1+|\theta|}{v}\bigg)\sum_{1 \leq k \leq  v}\frac{\lambda^k}{k!} + \frac{\lambda^{v+1}}{v\cdot v!} +O(1) \ll \frac{e^\lambda}{\sqrt{\lambda}},
	\end{equation}
	with the last inequality following by the same argument we used in the case when $\theta\le 1/2$ above.
	It remains to prove a matching lower bound. Note that $$a_k = \frac{v-k+1}{v} \cdot \frac{\lambda^k}{k!} \geq \frac{1}{v} > 0.$$
	Furthermore, since $\theta \leq \sqrt{\lambda}$ we have
	\begin{equation} \label{poisson lemma iii eq 5}
		\sum_{1 \leq k \leq v}a_k = \sum_{1 \leq k \leq \lambda -\theta}a_k \geq \sum_{1 \leq k \leq \lambda -\sqrt{\lambda}} a_k.
	\end{equation}
	\noindent By part (v) of our proof (which we prove below),\footnote{The proof of (v) appears after that of (iii) for aesthetic reasons. The reader can verify that there is no circular reasoning, as the proof of (v) does not use the claim (iii) in any way.} we have
	\begin{equation}\label{poisson lemma iii eq 6}
		\sum_{1 \leq k \leq \lambda - \sqrt{\lambda}} a_k \gg \frac{\lambda^v}{v!}\cdot \frac{v}{\lambda}.
	\end{equation}
	By \eqref{poisson lemma iii eq 5} and \eqref{poisson lemma iii eq 6} we have 
	\begin{equation}\label{poisson lemma iii eq 7}
		\sum_{1 \leq k \leq v} a_k \gg \frac{e^\lambda}{\sqrt{\lambda}}.
	\end{equation}
	Putting together the matching upper bound of \eqref{poisson lemma iii eq 4} and the lower bound of \eqref{poisson lemma iii eq 7} proves (iii) in this second case. 
	
	\medskip\noindent\textit{Proof of (iv).} If $\theta < 0$, then $v \geq \lambda$. Arguing as in \eqref{eq cite for part iv 1} and \eqref{eq cite for part iv 2}, we have
	$$\sum_{1 \leq k \leq v}\frac{\lambda^k}{k!} \asymp e^\lambda.$$
	Substituting this into \eqref{poisson key identity} and using the fact that $(1-\theta)/v > 0$, we have
	\begin{align}
		\sum_{1 \leq k \leq v}\frac{\lambda^k}{k!}\cdot\frac{(v-k+1)}{v} &\asymp \bigg(\frac{1-\theta}{v}\bigg)\sum_{1 \leq k \leq  v}\frac{\lambda^k}{k!} + \frac{\lambda^{v+1}}{v\cdot v!} + O(1)\nonumber\\
		&\asymp \frac{\lambda^{v+1}}{v\cdot v!} + \frac{e^\lambda(1+|\theta|)}{v} +O(1). \label{pois est iv 1}
	\end{align}
	Since $\lambda/v \asymp 1,$ we have	\begin{align}
		\frac{\lambda^{v+1}}{v\cdot v!} &\asymp \frac{\lambda^v}{v!}.
	\end{align}
	By Stirling's approximation,
	$$\frac{\lambda^v}{v!} \asymp \frac{e^v \lambda^v}{v^v \sqrt{v}} = \frac{e^v }{\sqrt{v}} \cdot \exp\big(v \log (\lambda/v)\big) = \frac{e^v }{\sqrt{v}} \cdot \exp\big(v \log (1+\theta/v)\big),$$
	having written $\lambda = v+\theta.$ Using the inequality $\log(1+u) \leq u - u^2/4,$ which is valid for $u = \theta/v = o(1)$, the above becomes
	$$\frac{\lambda^{v+1}}{v\cdot v!} \ll \frac{e^v }{\sqrt{v}} \cdot \exp\bigg(\theta - \frac{\theta^2}{4v}\bigg) = \frac{e^{v+\theta} }{\sqrt{v}} \cdot \exp\bigg(- \frac{\theta^2}{4v}\bigg) = \frac{e^\lambda }{\sqrt{v}} \cdot \exp\bigg( - \frac{\theta^2}{4v}\bigg).$$
	Since we've assumed that $|\theta| \ge \sqrt{\lambda}$, then $e^{- \frac{\theta^2}{4v}} \ll |\theta|/\sqrt{\lambda},$ and so we conclude that
	$$\frac{\lambda^{v+1}}{v\cdot v!} \ll \frac{e^\lambda }{\sqrt{v}} \cdot \frac{|\theta|}{\sqrt{v}} \asymp \frac{|\theta| e^\lambda}{v}.$$
	Substituting this into \eqref{pois est iv 1} gives
	\begin{align*}
		\sum_{1 \leq k \leq v}\frac{\lambda^k}{k!}\cdot\frac{(v-k+1)}{v} &\asymp \frac{e^\lambda|\theta|}{v}.
	\end{align*}
	This completes the proof of (iv).
	
	\medskip\noindent\textit{Proof of (v).} First, note the trivial bound
	$$ \sum_{1 \leq k \le v/100} \frac{\lambda^k (v-k+1)}{k! \cdot v} \ll \exp[v/10], $$
	which is considerably smaller than the claimed size of the full sum over $k$. Let us now consider the range $k \in (v/100,v]$. For each $k$, define $h_k$ to be the smallest non-negative integer such that 
	$$ \frac{\lambda^{k-h_k}}{(k-h_k)!} \leq \frac{1}{2} \frac{\lambda^k}{k!}. $$
	The number $h_k$ exists for each $k \in [v/100,v]$ since it is the minimum of a nonempty finite set. Indeed, $\lambda^0/0! < (1/2) \lambda^k/k!$ for each $k \in [v/100,v]$, and
	$$ h_k = \min \bigg\{n \in \mathbb{Z}_{\geq 0}: \frac{\lambda^{k-n}}{(k-n)!} \leq \frac{1}{2} \frac{\lambda^k}{k!}\bigg\}. $$
	Having proven the existence of $h_k$, our next goal is to identify its size. 
	
	\begin{claim} For $k \in [v/100,v]$, we have
		\begin{equation}\label{hv closed form}
			h_k \asymp \frac{1}{\log(\lambda/k)}.
		\end{equation}
	\end{claim}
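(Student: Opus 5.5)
The plan is to control $h_k$ through the ratio
$$\frac{\lambda^{k-n}/(k-n)!}{\lambda^k/k!} = \prod_{i=0}^{n-1}\frac{k-i}{\lambda}.$$
Here we are in part (v), so $\theta = \lambda - v >0$. For $k\in[v/100,v]$ we therefore have $k\le v<\lambda$, and every factor $(k-i)/\lambda$ lies in $(0,1)$. This product decreases in $n$. Hence $h_k$ is exactly the least $n$ for which $\sum_{i=0}^{n-1}\log\frac{\lambda}{k-i}\ge \log 2$. I would sandwich this sum between two expressions that are linear in $n$.

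\emph{Lower bound on the sum (gives the upper bound on $h_k$).} Each term satisfies $\log(\lambda/(k-i))\ge \log(\lambda/k)$. So the sum is at least $n\log(\lambda/k)$. Taking $n=\lceil \log 2/\log(\lambda/k)\rceil$ already achieves the threshold, so
$$h_k\le 1+\frac{\log 2}{\log(\lambda/k)}.$$
Since $k\le v<\lambda$ and $\lambda\asymp v\asymp k$, we have $\log(\lambda/k)\ll 1$, and the additive $1$ is absorbed. This gives $h_k\ll 1/\log(\lambda/k)$.

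\emph{Upper bound on the sum (gives the lower bound on $h_k$).} For $i<n$,
$$\log\frac{\lambda}{k-i}=\log\frac{\lambda}{k}+\log\frac{k}{k-i}\le \log\frac{\lambda}{k}+\frac{2i}{k},$$
valid as long as $i\le k/2$. So for $n\le k/2$ the sum is at most $n\log(\lambda/k)+n^2/k$. Now set $n_0=\lfloor c_0/\log(\lambda/k)\rfloor$ for a small absolute constant $c_0$. We need to check that $n_0^2/k$ is small. In the regime of part (v) we have $\theta=o(\lambda)$. Because $k\le v$, this gives $\log(\lambda/k)\ge\log(\lambda/v)\asymp\theta/v\ge \sqrt{\lambda}/v\gg 1/\sqrt{k}$. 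Hence $n_0\ll c_0\sqrt{k}$ and $n_0^2/k\ll c_0^2$. With $c_0$ small, the sum at $n=n_0$ is below $\log 2$, and therefore $h_k> n_0$.

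\emph{Edge cases and the main obstacle.} If $\log(\lambda/k)$ is bounded below, I would simply use $h_k\ge1$ whenever $c_0/\log(\lambda/k)<1$; indeed $h_k\ge 1$ always holds, since $n=0$ fails the defining inequality. The main obstacle is this lower bound step. The quadratic correction $n^2/k$ must not overwhelm the main term, and this is exactly where the barrier $\theta\ge\sqrt{\lambda}$ is needed, in the form $\log(\lambda/k)\gg k^{-1/2}$. If $\theta$ were smaller, $h_k$ would instead be of size about $\sqrt{k}$, and the claim as stated would fail.
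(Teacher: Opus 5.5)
Your proposal is correct and follows essentially the same route as the paper. The upper bound on $h_k$ comes from bounding each factor $(k-i)/\lambda$ by $k/\lambda$, which is the paper's $k!/(k-n)!\le k^n$. The lower bound comes from a second-order expansion whose quadratic error $n^2/k$ stays small because $\theta\ge\sqrt{\lambda}$ forces $1/\log(\lambda/k)\ll\sqrt{k}$; the paper writes this as $(1-x/k)^{-x}=\exp(x^2/k+O(x^3/k^2))$ with $x\ll\gamma\sqrt{\lambda}$.

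Your treatment of the case $n_0=0$ is slightly more careful than the paper's, which asserts $0<x$ even though $x=0$ for $k$ near $v/100$. In fact $h_k>n_0$ already gives $h_k\ge n_0+1>c_0/\log(\lambda/k)$, so no separate edge case is needed.
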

	\begin{proof}[Proof of claim]
		Let us first show that $h_k \ll 1/\log(\lambda/k)$. Let $\beta > 0$ be a parameter to be chosen later. We note that $h_k \leq \lfloor \beta/ \log (\lambda/k) \rfloor$ is equivalent to 
		\begin{equation}\label{poisson lemma v eq a}
			\lambda^{-\lfloor \beta/ \log (\lambda/k) \rfloor} \bigg(\frac{k!}{(k-\lfloor \beta/ \log (\lambda/k) \rfloor)!}\bigg) \leq \frac{1}{2}.
		\end{equation}
		Since $k!/(k-j!) \leq k^j$, we have 
		$$ \lambda^{-\lfloor \beta/ \log (\lambda/k) \rfloor} \bigg(\frac{k!}{(k-\lfloor \beta/ \log (\lambda/k) \rfloor)!}\bigg) \leq (k/\lambda)^{\lfloor \beta/ \log (\lambda/k) \rfloor} = e^{-\beta} \big(\lambda/k \big)^{f}, $$
		where $f = \{\beta/ \log (\lambda/k)\}$, and $\{\cdot\}$ denotes the fractional part. Since $f \in [0,1)$ and $k \leq \lambda$ by the assumption of (v), we have $\big(\lambda/k\big)^{f} \leq \lambda/k$. Thus,
		$$ \lambda^{-\lfloor \beta/ \log (\lambda/k) \rfloor} \bigg(\frac{k!}{(k-\lfloor \beta/ \log (\lambda/k) \rfloor)!}\bigg) \leq e^{-\beta} \frac{\lambda}{k}. $$
		By the assumption $k \geq v/100$, we have
		$$ e^{-\beta}\frac{\lambda}{k} \leq \exp\big[\log 100-\beta - \log(v/\lambda)\big] = \exp\big[\log 100 - \beta - \log(1-\theta/\lambda)\big]. $$
		Since $\theta = o(\lambda)$, then for sufficiently large $\lambda$ we have $- \log(1-\theta/\lambda) \leq 1$. The conclusion is that
		$$ \lambda^{-\lfloor \beta/ \log (\lambda/k) \rfloor} \bigg(\frac{k!}{(k-\lfloor \beta/ \log (\lambda/k) \rfloor)!}\bigg) \leq \exp\big[1+\log 100-\beta\big]. $$
		By choosing $\beta$ sufficiently large, say $\beta = 10$, we see that the above implies \eqref{poisson lemma v eq a}, and hence $h_k \ll 1/\log(\lambda/k)$. 
		
		Let us now show that $h_k \gg 1/\log(\lambda/k)$. Let $\gamma \in (0,1]$ be a parameter to be chosen later. Because of the minimality condition in the definition of $h_k$, we note that the claim $h_k > \lfloor \gamma/ \log (\lambda/k) \rfloor$ is equivalent to
		\begin{equation}\label{poisson lemma v eq b}
			\lambda^{-\lfloor \gamma/ \log (\lambda/k) \rfloor} \bigg(\frac{k!}{(k-\lfloor \gamma/ \log (\lambda/k) \rfloor)!}\bigg) > \frac{1}{2}.
		\end{equation}
		Write $x = \lfloor \gamma/ \log (\lambda/k) \rfloor$, and note that $0 < x \ll \gamma \sqrt{\lambda}$ by our assumption that $\theta \geq \sqrt{\lambda}$. Indeed, since $\log(\lambda/k)$ is minimized on our interval at $k=v$, and $\log(\lambda/v) = \log(1+\theta/v) \geq \theta/(2v)$, we have
		$$ x \leq \frac{\gamma}{\log(\lambda/v)} \leq \frac{2\gamma v}{\theta} \ll \gamma \sqrt{\lambda}. $$
		Using our assumption $k \geq v/100$, we therefore have 
		\begin{align}
			\lambda^{x} \bigg(\frac{(k-x)!}{k!}\bigg) \leq \bigg(\frac{\lambda}{k}\bigg)^{x} \cdot \Big( 1-\frac{x}{k} \Big)^{-x} & = \bigg(\frac{\lambda}{k}\bigg)^{x} \cdot \exp \bigg(\frac{x^2}{k} + O\Big(\frac{x^3}{k^2}\Big)\bigg) \nonumber\\
			&\le \exp\Big(\gamma + O(\gamma^2)\Big).  \label{pois lem v eq claim 3}
		\end{align}
		Choosing $\gamma$ to be a sufficiently small positive real number, we can make the right hand side of \eqref{pois lem v eq claim 3} to be $<2$, i.e., 
		$$ \lambda^{x} \bigg(\frac{(k-x)!}{k!}\bigg) < 2. $$ 
		Hence, \eqref{poisson lemma v eq b} is satisfied with such a $\gamma$, and so $h_k \geq \lfloor \gamma / \log(\lambda/k) \rfloor \gg 1/\log(\lambda/k)$. This completes the proof of the claim.
	\end{proof}
	
	Now, write $v = v_0$, and define recursively $v_{j+1} = v_{j}-h_{v_j}$ for $j \geq 0$. We stop this sequence at step $J$, where $v_{J+1} \leq v/100$.
	\begin{claim}
		For $j \geq 0$ and $k \in (v_{j+1}, v_j]$, we have 
		$$ \frac{\lambda^k}{k!} \leq \frac{1}{2^j}\frac{\lambda^v}{v!}. $$
	\end{claim}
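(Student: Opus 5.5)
The plan is induction on $j$, using two facts: the definition of $h_k$, and the monotonicity of $k\mapsto \lambda^k/k!$ on $k\le v$. Since $\theta\ge\sqrt{\lambda}>0$, we have $\lambda>v$, so for every $k\le v$ the ratio of consecutive terms satisfies
\[
\frac{\lambda^{k}/k!}{\lambda^{k-1}/(k-1)!}=\frac{\lambda}{k}>1 .
\]
Hence $\lambda^k/k!$ is strictly increasing on $1\le k\le v$. For any $k$ in an interval $(v_{j+1},v_j]$ we therefore have $\lambda^k/k!\le \lambda^{v_j}/v_j!$. The claim then reduces to showing $\lambda^{v_j}/v_j!\le 2^{-j}\lambda^v/v!$ for every $j\ge 0$ with $v_j>v/100$, i.e.\ for every $j\le J$.

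The induction proceeds as follows. The base case $j=0$ is trivial, since $v_0=v$. For the step, suppose $\lambda^{v_j}/v_j!\le 2^{-j}\lambda^v/v!$ and $j<J$, so that $v_{j+1}>v/100$ and hence also $v_j>v/100$. Because $v_j\in(v/100,v]$, the quantity $h_{v_j}$ is defined, and by its definition
\[
\frac{\lambda^{v_{j+1}}}{v_{j+1}!}=\frac{\lambda^{v_j-h_{v_j}}}{(v_j-h_{v_j})!}\le \frac12\,\frac{\lambda^{v_j}}{v_j!}\le 2^{-(j+1)}\frac{\lambda^v}{v!}.
\]
This is exactly the inductive hypothesis at level $j+1$. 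Combined with the monotonicity step, it gives the claim for every $k\in(v_{j+1},v_j]$.

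Three small points need checking.

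First, $h_{v_j}\ge 1$, so the sequence $v_j$ strictly decreases and $J$ is finite. This holds because $n=0$ never satisfies $\lambda^{k}/k!\le \frac12\lambda^k/k!$. One could also invoke the preceding claim, $h_k\asymp 1/\log(\lambda/k)$, though it is not needed for this step.

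Second, the $v_j$ are integers. They are, since each $h_k$ is an integer and $v_0=v$ is one.

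Third, at the last index $j=J$ the interval $(v_{J+1},v_J]$ may dip below $v/100$. Monotonicity still gives the bound there, because we only need $v_J$ itself to satisfy the hypothesis, which it does: $v_J>v/100$ by the stopping rule.

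I expect no serious obstacle. The only subtle point is bookkeeping: applying $h$ only at indices $v_j$ lying in $[v/100,v]$, where $h$ was defined. That is guaranteed because the sequence stops at the first index whose successor falls to or below $v/100$.
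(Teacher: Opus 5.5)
Your proposal is correct and follows the paper's proof: induction on the endpoint values $\lambda^{v_j}/v_j!$ using the halving property that defines $h_{v_j}$, followed by monotonicity of $\lambda^k/k!$ for $k\le v<\lambda$ to extend the bound to the whole interval $(v_{j+1},v_j]$. Your extra bookkeeping, namely that $h_{v_j}\ge 1$ and that $h$ is only applied at indices $v_j>v/100$ where it is defined, makes explicit what the paper leaves implicit.
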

	\begin{proof}[Proof of claim]
		We proceed by induction on $j$ to show the bounds at the sequence endpoints: $\lambda^{v_j}/v_j!$. The base case $j=0$ is trivial since $v_0 = v$. 
		
		Assume the bounds hold for some $j \ge 0$. By the definition of $h_{v_j}$ and the inductive hypothesis, we have
		\begin{equation} \label{pois lem v eq ind 1}
			\frac{\lambda^{v_{j+1}}}{v_{j+1}!} = \frac{\lambda^{v_j - h_{v_j}}}{(v_j - h_{v_j})!} \leq \frac{1}{2} \frac{\lambda^{v_j}}{v_j!} \leq \frac{1}{2} \bigg( \frac{1}{2^j}\frac{\lambda^v}{v!} \bigg) = \frac{1}{2^{j+1}}\frac{\lambda^v}{v!},
		\end{equation}
		completing the induction. Finally, for any $k \in (v_{j+1}, v_j]$, the sequence $\lambda^k/k!$ is monotonically increasing in $k$ (since $k \le v < \lambda$). Therefore, its value is bounded above by the value at the right endpoint ($v_j$). This proves the claim.
	\end{proof}
	
	We can evaluate the sum of $(v-k+1)$ over a single interval $(v_{j+1},v_j]$ exactly by recognizing it as an arithmetic progression. The sequence has $v_j - v_{j+1} = h_{v_j}$ terms. The first term (when $k = v_{j+1}+1$) is $v - v_{j+1}$, and the last term (when $k = v_j$) is $v - v_j + 1$. Using the standard formula for the sum of an arithmetic progression, we obtain
	\begin{align}
		\sum_{k \in (v_{j+1},v_j]} (v-k+1) \nonumber
		&= \frac{h_{v_{j}}}{2}\bigg[ (v-v_j + 1) + (v-v_{j+1}) \bigg]\\ \nonumber
		&= \frac{h_{v_{j}}}{2}\bigg( (v-v_j) + (v-v_j + h_{v_j}) + 1 \bigg)\\ 
		&= \frac{h_{v_{j}}}{2}\bigg( 2\sum_{i = 0}^{j-1}h_{v_i} + h_{v_j} + 1 \bigg), \label{eq: sum the AP}
	\end{align} 
	for each $j \geq 0$.
	
	By partitioning the range $(v/100, v]$ into these intervals $(v_{j+1}, v_j]$ for $0 \leq j \leq J$, and applying the bound from our second claim, we can establish an upper bound for the total sum. 
	
	To upper bound the interval sums, we note from \eqref{hv closed form} that $h_k \asymp 1/\log(\lambda/k)$ is an increasing function of $k$. Therefore, $h_{v_i} \leq h_v$ for all $i \leq J$. Applying this to \eqref{eq: sum the AP} yields
	$$ \sum_{k \in (v_{j+1},v_j]} (v-k+1) \ll h_{v_j} \sum_{i=0}^j h_{v_i} \leq (j+1) h_v^2. $$
	Dividing by $v$ and applying the second claim gives
	$$ \sum_{v/100 < k \leq v} \frac{\lambda^k}{k!}\cdot \frac{v-k+1}{v} \ll \frac{\lambda^v}{v \cdot v!} \sum_{j=0}^J \frac{j+1}{2^j} h_v^2 \ll \frac{\lambda^v}{v\cdot v!} \cdot h_v^2. $$
	
	For the lower bound, since all terms are positive, we restrict our attention to the interval $j=0$ (i.e., $k \in (v_1, v_0]$) and use the fact that $\lambda^k/k!$ is monotonically increasing in this range:
	\begin{equation}\label{eq: lower bounding interval decomp}
		\sum_{v/100 < k \leq v} \frac{\lambda^k}{k!}\cdot \frac{v-k+1}{v} \geq \frac{\lambda^{v_1}}{v_1!} \sum_{k \in (v_1, v_0]} \frac{v-k+1}{v}.
	\end{equation}
	To bound $\lambda^{v_1}/v_1!$ from below, we use the minimality of $h_{v_0}$. By definition, the term immediately preceding $v_1$ strictly fails the halving condition:
	$$ \frac{\lambda^{v_1 + 1}}{(v_1 + 1)!} > \frac{1}{2} \frac{\lambda^{v_0}}{v_0!} = \frac{1}{2} \frac{\lambda^v}{v!}. $$
	Multiplying by the ratio between consecutive terms gives
	$$ \frac{\lambda^{v_1}}{v_1!} = \frac{\lambda^{v_1 + 1}}{(v_1 + 1)!} \cdot \frac{v_1 + 1}{\lambda} > \frac{1}{2} \frac{\lambda^v}{v!} \cdot \frac{v_1 + 1}{\lambda}. $$
	Since $v_1 = v - h_{v_0}$ and $h_{v_0} \ll \sqrt{v}$, we have $v_1 \sim v \sim \lambda$. Thus, $(v_1+1)/\lambda \gg 1$, which implies
	\begin{equation*}
		\frac{\lambda^{v_1}}{v_1!} \gg \frac{\lambda^{v}}{v!}.
	\end{equation*}
	Substituting this into \eqref{eq: lower bounding interval decomp} gives 
	\begin{equation}\label{eq: lower bounding interval decomp 2}
		\sum_{v/100 < k \leq v} \frac{\lambda^k}{k!}\cdot \frac{v-k+1}{v} \gg \frac{\lambda^{v}}{v \cdot v!} \sum_{k \in (v_1, v_0]} (v-k+1).
	\end{equation}
	By using the closed form evaluated at $j=0$ in \eqref{eq: sum the AP}, we see that 
	$$ \sum_{k \in (v_1, v_0]} (v-k+1) = \frac{h_{v_0}}{2}(h_{v_0}+ 1) \asymp h_{v}^2, $$ 
	since $v_0 = v$. Substituting this into \eqref{eq: lower bounding interval decomp 2} gives
	$$ \sum_{v/100 < k \leq v} \frac{\lambda^k}{k!}\cdot \frac{v-k+1}{v} \gg  \frac{\lambda^{v}}{v \cdot v!} h_v^2. $$
	
	We have proven matching upper and lower bounds for the sum of $k \in (v/100,v]$. Adding the negligible contribution from $k \leq \lambda/100$, and noting that $h_v \asymp 1/\log(\lambda/v) \asymp v/\theta$, we conclude
	$$ \sum_{1 \leq k \leq v} \frac{\lambda^k}{k!}\cdot \frac{v-k+1}{v} \asymp \frac{\lambda^v}{v!} \frac{h_v^2}{v} \asymp \frac{\lambda^v}{v!} \cdot \frac{v}{\theta^2}. $$
	This completes the proof of (v), and hence the proof of the Lemma.
\end{proof}

\section{Strong barriers for Generalized Smirnov Statistics}
The proof of \Cref{lem LB Yk} uses estimates related to uniform order statistics with barrier conditions. In particular, we need upper and lower bounds for $Q_k(u,v)$, which is defined in the following way: If $\xi_1,\dots,\xi_k$ are the order statistics for $k$ random samples from the uniform distribution on $[0,1]$, then 
\begin{equation}\label{def Qk}
	Q_k(u,v) := \P\Big( \xi_i \geq \frac{i-u}{v}, \quad 1 \leq i \leq k \Big).
\end{equation}

The quantity $Q_k(u,v)$ has been referred to as a \textit{generalized Smirnov statistic} in \cite{ford2007generalized} and \cite{Ford2008}. The next lemma is our `go-to' for estimates related to these statistics. Part (i) is due to Daniels \cite{daniels1945statistical}, and part (ii) appears explicitly as Lemma 3.3.1 in \cite{koukoulopoulos2010generalized}, though it essentially follows from Lemma 11.1 of \cite{ford2008distribution} and Theorem 1 of \cite{Ford2008}.
\begin{lemma}\label{order stats lem} Let $Q_k(u,v)$ be defined as in \eqref{def Qk}, assume $k \in \N$, and write $w = u+v-k$.
	\begin{enumerate}[label = (\roman*)]
		\item If $k-v < u \leq 1$, then 
		$$Q_k(u,v) = \frac{w}{v}\bigg(1+\frac{1}{v}\bigg)^{k-1};$$
		\item Uniformly in $u > 0$, $w >0$ and $k$,
		$$Q_k(u,v) \ll \frac{(u+1)(w+1)}{k}.$$
	\end{enumerate}
\end{lemma}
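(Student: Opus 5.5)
Part (i) is Daniels' classical formula and part (ii) is already in the literature, so the plan is to reprove (i) by a direct volume computation and to reduce (ii) to two one-sided ballot estimates glued at a midpoint, following Lemma 11.1 of \cite{ford2008distribution} and Lemma 3.3.1 of \cite{koukoulopoulos2010generalized}.

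\emph{Part (i).} Since the joint density of the order statistics is $k!$ on the simplex $0\le\xi_1\le\cdots\le\xi_k\le1$, we have $Q_k(u,v)=k!\,\Vol(R_k)$, where $R_k$ is the part of the simplex with $\xi_i\ge a_i:=(i-u)/v$. The barriers form an arithmetic progression with step $1/v$, and $u\le 1$ gives $a_1\le 0$, so the first barrier is inactive.

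I would introduce the truncated volume $F_k(t)$, the volume of $\{a_i\le\xi_1\le\cdots\le\xi_k\le t\}$, which satisfies
\[
F_k(t)=\int_{a_k}^{t}F_{k-1}(s)\,ds .
\]
The claim to prove by induction on $k$ is that $F_k(t)$ is an Abel--Gontcharov type polynomial: a linear factor in $t$ times a $(k-1)$-st power of a shifted linear form, divided by $k!$. Differentiating reduces the inductive step to the vanishing at $t=a_k$ plus a binomial identity of Abel type. Evaluating at $t=1$ and rewriting in terms of $w=u+v-k$ should give $\frac{w}{v}(1+\frac1v)^{k-1}$. The hypothesis $k-v<u$, i.e.\ $w>0$, guarantees $a_k<1$, so the region is nonempty and the formula stays in its positive range. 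An alternative route is the cyclic-shift (ballot/Takács) argument on the circle of length $1+1/v$, which yields the factor $w/v$ directly. Either way, the main care is fixing the exact shift in the closed form; I would sanity-check it at $k=1,2$.

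\emph{Part (ii).} This is the harder part. I would condition on the number $m$ of sample points in $[0,\tfrac12]$, which is binomially distributed. Given $m$, the two halves are independent uniform order statistics.
\begin{itemize}
\item The lower half must stay above the barrier started with slack $u$. By a one-sided ballot bound, derived from (i) or from the reflection principle, this costs $\ll (u+1+|\Delta|)/\sqrt{k}$, where $\Delta$ is the deviation of $m$ from its barrier-consistent value.
\item The upper half, read backwards from $1$, must stay above a barrier with endpoint slack $w$, which costs $\ll (w+1+|\Delta|)/\sqrt{k}$.
\end{itemize}
Summing over $m$ against the binomial weights, whose variance is $\asymp k$, the terms with $|\Delta|\gg u+w$ are controlled by Gaussian tails. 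This leaves $(u+1)(w+1)/k$.

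The main obstacle is making these one-sided bounds uniform when $u$ or $w$ is as large as $\sqrt{k}$ or more and the barrier is not exactly tangent. I would handle that regime by monotonicity: $Q_k(u,v)\le 1$ trivially, and $Q_k$ is increasing in $u$ and in $w$, so it suffices to treat $u,w\le\sqrt{k}$. Failing a clean self-contained argument, I would quote Lemma 3.3.1 of \cite{koukoulopoulos2010generalized} as stated.
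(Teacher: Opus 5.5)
The paper does not prove this lemma: it quotes Daniels for (i) and Lemma 3.3.1 of \cite{koukoulopoulos2010generalized} for (ii), so your fallback for (ii) is exactly its route. Your self-contained sketches, however, break down in both parts.

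In (i), your recursion $F_k(t)=\int_{a_k}^tF_{k-1}(s)\,ds$ and the Abel-type ansatz are right. Carried out with $T=t-a_1$, they give
\[
F_k(t)=\frac{1}{k!}\Big(T-\frac{k-1}{v}\Big)\Big(T+\frac1v\Big)^{k-1},\qquad\text{so}\qquad Q_k(u,v)=\frac wv\Big(1+\frac uv\Big)^{k-1}.
\]
The cyclic-shift argument gives the same result when run on a circle of length $1+u/v$. This agrees with the displayed formula only when $u=1$ or $k=1$: already $Q_2(0,v)=1-\frac2v$, whereas the statement predicts $(1-\frac2v)(1+\frac1v)$. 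So your step ``evaluating at $t=1$ should give $\frac wv(1+\frac1v)^{k-1}$'' cannot succeed. Statement (i) as printed misquotes Daniels' formula and is correct only at $u=1$. This is harmless for the paper, which uses (i) only through $Q_k(1,v)$ (for $\P[B]$ in \Cref{lemma better smirnov} and for $\Vol(S_k(1,\Tilde v))$ in the proof of \Cref{lem LB Yk}), and your planned check at $k=2$ would have caught it. You also have the role of $u\le1$ backwards. Since $a_1=(1-u)/v\ge0$, the first barrier is active and supersedes $\xi_1\ge0$; this is exactly what makes $F_1(t)=t-a_1$ valid, and for $u>1$ the formula changes.

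In (ii), the gluing does not yield $(u+1)(w+1)/k$. With $w'=u+\frac v2-m$ one has exactly
\[
Q_k(u,v)=\sum_{m}\binom km2^{-k}\,Q_m\big(u,\tfrac v2\big)\,Q_{k-m}\big(w',\tfrac v2\big),
\]
and your $\Delta$ is $w'$ up to sign and a shift. With the additive bounds $(u+1+|\Delta|)/\sqrt k$ and $(w+1+|\Delta|)/\sqrt k$, the product contains a term $|\Delta|^2/k$. Under the binomial weights $w'$ has variance $k/4$, so $|\Delta|\asymp\sqrt k$ is the bulk of the distribution, not a Gaussian tail. For $u,w\le\sqrt k$ the average is therefore $\asymp1$; for $u,w\asymp1$ you get $O(1)$ instead of $O(1/k)$. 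What the argument needs is the multiplicative form $Q_m(u,\frac v2)\ll(u+1)(w'+1)/m$ and $Q_{k-m}(w',\frac v2)\ll(w'+1)(w+1)/(k-m)$. Averaging $(w'+1)^2\ll k+(u+w+1)^2$ then closes it for $u,w\le\sqrt k$. But that is (ii) itself at half scale, and a naive induction on $k$ does not close because the implied constant gets squared at each step, so an independent two-sided ballot estimate is required. Also, your monotonicity reduction runs the wrong way: $Q_k$ being increasing in $u$ cannot bound $Q_k(u,v)$ for $u>\sqrt k$ by its values at $u\le\sqrt k$. That range needs a separate estimate, e.g.\ $Q_k(u,v)\ll(w+1)/\sqrt k$ uniformly in $u$.
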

 We now state and prove a lemma on generalized Smirnov statistics with strong barrier conditions. This lemma also simplifies part of the lower bound proof of the main theorem in \cite{ford2008distribution} in the important $z=2y$ case.
\begin{lemma}\label{lemma better smirnov}
	Let $\xi_1,\dots,\xi_k$ be uniform order statistics. Fix $\epsilon \in (0,1/12)$, and $C > 10$. Suppose that $1 \le k \leq v$. Define the events
	\begin{align*}
		B&:= \bigcap_{i = 1}^k\Big\{\xi_i \geq \frac{i-1}{v}\Big\},\\
		\mathscr{B} &:= \bigcap_{i = 1}^k\Bigg\{ \xi_i \geq \max\bigg\{\frac{i-1}{v},\frac{i+\min\{i , (k-i)\}^{1/6-\epsilon}-C}{v}\bigg\}\Bigg\}.
	\end{align*}
	Then,
	$$\P[\mathscr{B}] = \P[B]\big(1 + O\big(C^{-\epsilon}\big)\big).$$
\end{lemma}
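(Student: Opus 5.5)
Since $\mathscr{B}\subseteq B$, it suffices to show $\P[B\setminus\mathscr{B}] \ll C^{-\epsilon}\P[B]$. By \Cref{order stats lem}(i) with $u=1$ we have $\P[B] = \frac{v-k+1}{v}(1+1/v)^{k-1}\asymp \frac{v-k+1}{v}$. On $B\setminus\mathscr{B}$ there is an index $i$ with $\frac{i-1}{v}\le \xi_i < \frac{i+m_i^{\alpha}-C}{v}$, where $m_i=\min\{i,k-i\}$ and $\alpha = 1/6-\epsilon$. The barrier can only bind when $m_i^\alpha > C-1$, so $m_i \ge (C-1)^{1/\alpha}$. We take a union bound over such $i$, and by symmetry treat only $i\le k/2$, so that $m_i=i$. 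The case $i>k/2$ reduces to this one via the reflection $\xi_j\mapsto 1-\xi_{k+1-j}$, up to bookkeeping of the boundary $v-k$.

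For fixed $i$, we condition on the location of $\xi_i$. We set $\xi_i = t$ with $v t\in[i-1, i+i^\alpha-C)$. Given $\xi_i=t$, the points $\xi_1,\dots,\xi_{i-1}$ are the order statistics of $i-1$ uniforms on $[0,t]$, and $\xi_{i+1},\dots,\xi_k$ are those of $k-i$ uniforms on $[t,1]$, independently. The density of $\xi_i$ at $t$ is $k\binom{k-1}{i-1}t^{i-1}(1-t)^{k-i}$. The left constraints $\xi_j\ge (j-1)/v$ for $j<i$ form, after rescaling by $t$, a Smirnov event with $u$ about $1$ and slack $w_L\approx vt-(i-1)+1$. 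By \Cref{order stats lem}(ii) its probability is $\ll (vt-i+2)/i$. The right constraints, after rescaling $[t,1]$ to $[0,1]$, form a Smirnov event with $u\approx vt-i+1$ and $w_R\approx v-k+1$, so its probability is $\ll (vt-i+2)(v-k+2)/(k-i)$.

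Multiplying these, the event for index $i$ has probability at most
\[
\ll \frac{v-k+2}{i(k-i)} \int_{(i-1)/v}^{(i+i^\alpha)/v} (vt-i+2)^2\, k\binom{k-1}{i-1}t^{i-1}(1-t)^{k-i}\,dt .
\]
Near $t\approx i/v$, Stirling gives $k\binom{k-1}{i-1}t^{i-1}(1-t)^{k-i}\ll v\cdot\frac{1}{\sqrt{i}}$, because $i/v\le i/k$ places $t$ at or below the mode and the Gaussian window has width $\sqrt{i}/v$ in the worst case $k\approx v$. Substituting $s = vt-i$, the integral becomes $\ll i^{-1/2}\int_{-1}^{i^\alpha}(s+2)^2\,ds\ll i^{3\alpha-1/2}$. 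With $k-i\gg k$ for $i\le k/2$, the $i$-th term is therefore
\[
\ll \frac{v-k+2}{k}\, i^{3\alpha - 3/2}.
\]
Since $3\alpha-3/2 = -1-3\epsilon<-1$, summing over $i\ge (C-1)^{1/\alpha}$ gives $\ll \frac{v-k+1}{k}\, C^{-3\epsilon/\alpha}$. This is at most $C^{-\epsilon}\cdot\frac{v-k+1}{v}$ times $v/k$, and the extra factor $v/k$ is removed by using the sharper density bound when $k<v$. The exponent $1/6$ is exactly what makes the cubic moment $i^{3\alpha}$ beat $i^{3/2}$.

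The main obstacle is the uniform density estimate: we need $k\binom{k-1}{i-1}t^{i-1}(1-t)^{k-i}$ to be $\ll v/\sqrt{i}$ uniformly on this window in $i,k,v$. When $k\ll v$, the density at $t\approx i/v$ is much smaller than at the mode $i/k$, which is what supplies the missing factor $k/v$. I would handle this by bounding $\log$ of the density with the relative-entropy form of Stirling, $\log\big(\binom{k-1}{i-1}t^{i-1}(1-t)^{k-i}\big)\le -(k-1)D\big(\tfrac{i-1}{k-1}\,\|\,t\big)$. A second, smaller difficulty is checking that the hypotheses $u>0$, $w>0$ of \Cref{order stats lem}(ii) hold in both conditional pieces, and that the shifts by $\pm1$ in $u,w$ only change constants.
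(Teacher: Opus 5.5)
Your proposal is correct and is essentially the paper's argument: a union bound over the violating index, conditioning on the position of that order statistic, bounding the two conditional pieces by \Cref{order stats lem}(ii) to get the factor $(s+2)^2(v-k+2)/(i(k-i))$, and a Stirling bound for the density of $\xi_i$. The only structural difference is that you integrate over $s$ before summing over $i$, whereas the paper sums over $j$ first for fixed $y$.

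Your ``main obstacle'' disappears once you bound the density by its maximum, $\sup_t k\binom{k-1}{i-1}t^{i-1}(1-t)^{k-i}\ll k^{3/2}/\sqrt{i(k-i+1)}$. This is exactly the paper's $\sup_{a\in[0,v]}a^{k-j}(v-a)^{j-1}$ step, and it gives the needed $k/\sqrt{i}$ with no off-mode analysis. The bound is also symmetric in $i$ and $k-i$, so you can handle $i>k/2$ by the same computation. You should not rely on the reflection $\xi_j\mapsto 1-\xi_{k+1-j}$ for that range, because it does not preserve $B$: it turns your lower barrier into an upper one.
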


\begin{remark}
	One can replace the exponent $1/6-\epsilon$ with the optimal $1/2-\epsilon$ by a more complicated proof using first incidence conditions instead of a union bound. The exponent $1/6-\epsilon$ is sufficient for our purposes of proving \Cref{mainthm}, and so we've decided to keep our simple non-optimal argument. Although there seems to be no literature proving the $1/2-\epsilon$ exponent for the specific case of uniform order statistics, there has been recent work proving an analogous theorem for random walks with Gaussian increments in \cite[Appendix B]{arguin2020fyodorov}.
\end{remark}

\begin{proof}[Proof of \Cref{lemma better smirnov}]
For ease of notation, write $\mu = \frac{1}{6}-\epsilon,$ and define $$M_i := \min \{i , (k-i)\}^{\mu} \quad (1 \leq i \leq k).$$ Note that the event $\mathscr{B}$ can be rewritten as
	\begin{align*}
		\mathscr{B} &:= B \cap \bigg( \bigcap_{i = 1}^k\Big\{ \xi_i \geq \frac{i+M_i-C}{v}\Big\}\bigg).
	\end{align*}
	We define, for any given $u > 0$,
	$$S_k(u,v) := \Big\{\xi \in \R^k : 0 \leq \xi_1 \leq \cdots \leq \xi_k \leq 1;\; \xi_i \geq \frac{i-u}{v}\Big\}.$$
	Note that $\P[B] = k!\Vol(S_k(1,v))$. By a simple union bound, we have
	\begin{equation}\label{bettersmirnov eq 0}
		\P[\mathscr{B} | B] \geq 1 - \sum_{1 \leq j \leq k}\P\big[v\xi_j \leq j + M_j - C \big| B\big].
	\end{equation}
	For fixed $j \in [1,k]$, we want to bound the probability of the event $$B_j=B\cap \{v\xi_j\le j+M_j-C\}.$$ Let $y = y_j = v \xi_j - j.$ When $B_j$ occurs, we have that $y_j\in[-1,M_j-C]$. We also note that $y_j \leq v-j$, since $\xi \leq 1$. We make the change of variables
	\begin{align*}
		\alpha_i &= \xi_i \cdot \frac{v}{y+j} \quad (1 \leq i \leq j-1),\\
		\beta_i &= \bigg( \xi_{j+i} - \frac{y+j}{v} \bigg) \cdot \frac{v}{v-y-j} \quad (1 \leq i \leq k-j).
	\end{align*}
	We see that
	\begin{align*}
		\bm{\alpha} &= (\alpha_1, \dots, \alpha_{j-1}) \in S_{j-1}(1,y+j),\\
		\bm{\beta} &= (\beta_1,\dots,\beta_{k-j}) \in S_{k-j}(y+1,v-y-j),
	\end{align*}
	and $d\bm{\xi} = d\bm{\alpha} \cdot d\bm{\beta} \cdot (v-y-j)^{k-j}(y+j)^{j-1} v^{-k}$. As such, $\P[B_j]$ is bounded above by
	$$\frac{k!}{v^k}\int_{-1}^{Y_j}\frac{(v-y-j)^{k-j}(y+j)^{j-1}}{(k-j)!(j-1)!}Q_{j-1}(1,  y+j)Q_{k-j}(y+1,v-y-j)dy,$$
	where $Y_j := \min\{M_j-C,v-j\}.$
	To bound the $Q$ probabilities, we can use \Cref{order stats lem} (ii). By directly applying this to the above integral,
	\begin{align*}
		\P[B_j]&\ll \frac{k!(v-k+1)}{v^k}\int_{-1}^{Y_j}\frac{(v-y-j)^{k-j}(y+j)^{j-1}}{(k+1-j)!j!} (y+2)^2dy\\
		&\asymp \frac{(v-k+1)}{k \cdot v^k}\int_{-1}^{Y_j}\binom{k+1}{j}(v-y-j)^{k-j}(y+j)^{j-1} (y+2)^2 dy.
	\end{align*}
	From \Cref{order stats lem} (i), we have that
	\begin{align*}
		\P[B] = Q_k(1,v) \asymp \frac{v-k+1}{v}\bigg(1+\frac{1}{v}\bigg)^{k-1} &\asymp \frac{v-k+1}{v} \cdot e^{(k-1) \log(1+1/v)}\\
		&\asymp \frac{v-k+1}{v},
	\end{align*}
	since $(1+1/v)^{k-1} \asymp 1$ uniformly for all $1 \leq k \leq v$. Together  with the definition of $\P[B_j]$, this implies that
	$$\P\big[ B_j | B\big] = \frac{\P[B_j]}{\P[B]} \asymp \frac{v}{v-k+1} \P[B_j].$$
	Using the above estimate for $\P[B_j]$, we thus have
	\begin{align}
		\sum_{j = 1}^{k}\P\big[B_j | B\big]  & \ll \frac{1}{v^k} \sum_{j = 1}^{k} \int_{-1}^{Y_j}\binom{k+1}{j}(v-y-j)^{k-j}(y+j)^{j-1} (y+2)^2 dy. \label{bettersmirnov eq 1}
	\end{align}
	We require a bound on the terms inside the above integral. Note that we have
	$$(v-y-j)^{k-j}(y+j)^{j-1} \leq \sup_{a \in [0,v]}a^{k-j}(v-a)^{j-1} = v^{k-1}\frac{(k-j)^{k-j}(j-1)^{j-1}}{(k-1)^{k-1}}.$$
	Therefore, Stirling's approximation gives
	$$\binom{k+1}{j}(v-y-j)^{k-j}(y+j)^{j-1} \ll \frac{v^k \cdot k^{3/2}}{j^{3/2}(k-j+1)^{3/2}}.$$
	We can interchange the sum with the integral in \eqref{bettersmirnov eq 1} and then apply this bound. Note that the condition $y \leq M_j -C$ can also be written as $(y+C)^{1/\mu} \leq j \leq k - (y+C)^{1/\mu}$. As such,
	\begin{align*}
		\sum_{j = 1}^k \P[B_j|B] \ll \int_{-1}^{Y} (y+2)^2 \sum_{j = (y+C)^{1/\mu}}^{k-(y+C)^{1/\mu}} \frac{ k^{3/2}}{j^{3/2}(k-j+1)^{3/2}}  dy &\ll \int_{-1}^{Y}\frac{(y+2)^{2}}{(y+C)^{\frac{1}{2\mu}}}dy,
	\end{align*}
	where $Y = \max_{1 \leq j \leq k}Y_j.$ When $y \leq C$, note that $(y+C)^{1/(2\mu)} \asymp C^{1/(2\mu)},$ so
	$$ \int_{-1}^{C}\frac{(y+2)^{2}}{(y+C)^{\frac{1}{2\mu}}}dy \ll  C^{-\frac{1}{2\mu}}\int_{-1}^{C}(y+2)^{2}dy \ll C^{3-\frac{1}{2\mu}}.$$ 
	On the other hand, when $y > C,$ we can use the bounds $(y+C) \asymp y \asymp y+2$  to get 
	$$ \int_{C}^{Y}\frac{(y+2)^{2}}{(y+C)^{\frac{1}{2\mu}}}dy \ll  \int_{C}^{Y}y^{2-\frac{1}{2\mu}}dy \ll C^{3-\frac{1}{2\mu}}.$$
	Using the definition of $\mu$, our conclusion is that 
	$$\sum_{j = 1}^k \P[B_j|B] \ll C^{3-\frac{1}{2\mu}} = C^{-18 \epsilon +O(\epsilon^2)} \ll C^{-\epsilon}.$$
	Substituting this bound in \eqref{bettersmirnov eq 0}, we infer that $\P[\mathscr{B} | B] = 1 + O\big(C^{-\epsilon}\big).$ From this and the fact that $\mathscr{B} \subseteq B$, we deduce that
	$$\P[\mathscr{B}] = \P[B]\big(1 + O\big(C^{-\epsilon}\big)\big).$$
	This concludes the proof.
\end{proof}
	
	\section{Proof of Lower Bound Lemmas}\label{sec lower bound proofs}
	\begin{proof}[Proof of \Cref{LB initial reduction}]
		We restrict our attention to squarefree integers $n \leq x$ which can be written as $n=apb$, with $p$ prime, and $a,p,b$ satisfying:
		\begin{enumerate}[label = (\roman*)]
			\item $a \leq y^{1/8}$;
			\item For all primes $q|b,$ $q \in (y^{1/4},y^{3/4}] \cup (2y,x]$;
			\item $\log(y/p) \in \mathscr{L}(a)$.
		\end{enumerate}
		\begin{remark}
			These three conditions ensure that the representation $n = apb$ is unique: Condition (iii) implies the existence of a $d|a$ for which $y/p < d$. Hence, $y/p \leq a \leq y^{1/8}$ by condition (i). Similarly, from condition (iii) one can conclude that $p \leq 2y/d$ for some $d|a$, and so $p \leq 2y$. We end up with $y^{7/8} \leq p \leq 2y.$ The uniqueness of the representation follows easily by examining the prime factors of $n$ that lie in each of the sets $[2,y^{1/8}]$, $(y^{1/4},y^{3/4}] \cup (2y,x]$, and $(y^{7/8},2y]$. 
		\end{remark}
		The condition (iii) ensures that each $apb$ is an integer counted by $H_Q(x,y,2y)$, because if $\log(y/p) \in \mathscr{L}(a),$ then there exists $d|a$ such that $y < dp \leq 2y$ and hence $apb$ has a divisor in the desired interval for any given integer $b \leq x/ap$.
		
		For a given $a,p$, we give a lower bound on the number of $b$ such that $apb \leq x$ and conditions (i)-(iii) are satisfied. When $y^{1/2} < \frac{x}{ap} \leq 4y$, then count only those $b$'s for which $P^{-}(b) > y^{1/4}$. By \Cref{count s P-}, the number of such $b$ is $$\gg_{c,\kappa} \frac{x}{ap (\log x/ap)^{1-\density}(\log y)^{\density}} \gg \frac{x}{ap (\log x)^{1-\density}(\log y)^{\density}}.$$ 
		When $\frac{x}{ap} > 4y,$ count only those $b$'s for which $P^{-}(b) > 2y$. By \Cref{count s P-}, the number of such $b$ is $$\gg_{c,\kappa} \frac{x}{ap (\log x/ap)^{1-\density}(\log y)^{\density}} \gg \frac{x}{ap (\log x)^{1-\density}(\log y)^{\density}}.$$
		
		As such,
		\begin{equation} \label{LB eq 1}
			H_Q(x,y,2y) \gg_{c,\kappa} \frac{x}{(\log x)^{1-\delta}(\log y)^\delta} \sum_{\substack{a \leq y^{1/8} \\ a \in \S \\ \mu^2(a) = 1}} \frac{1}{a} \sum_{\substack{p \in \S \\ \log(y/p)\in \L(a)}} \frac{1}{p}.
		\end{equation}
		We now deal with the sum over $p$. If we write $\L(a)$ as a finite disjoint union of intervals: $$\L(a) = \bigcup_{i = 1}^N (u_i,v_i],$$ then within each interval, the condition $\log(y/p) \in (u_i,v_i]$ is equivalent to $p \in [ye^{-v_i},ye^{-u_i}).$ Using \Cref{defining Q}, one has
		\begin{equation}\label{LB eq 2}
			\sum_{\substack{p \in \S \\ \log(y/p) \in \L(a)}} \frac{1}{p} \asymp_\kappa \density \sum_{i = 1}^{N}\frac{(v_i-u_i)}{\log y} \geq  c\frac{L(a)}{\log y}.
		\end{equation}
		Combining \eqref{LB eq 1} and \eqref{LB eq 2} completes the proof.
	\end{proof}
	
	\begin{proof}[Proof of \Cref{LB tau}]
		For each $a \in \mathscr{A}(\bm{b})$, we have $\omega(a) = k := b_1 + \cdots +b_h$ and $\mu^2(a) = 1$. As such, $\tau(a) = 2^k$. We are left to show that
		$$\sum_{a \in \mathscr{A}(\bm{b}) } \frac{1}{a} \gg \frac{\density \log 2}{b_M! \cdots b_h!}.$$
		By definition, each interval $D_j$ contains $b_j$ primes, and so it is plain to see that
		\begin{equation}\label{eq LB tau 1}
			\sum_{a \in \mathscr{A}(\bm{b}) \cap \S} \frac{1}{a} = \prod_{j = M}^h \frac{1}{b_j!}\bigg(\sum_{p_1 \in D_j}\frac{1}{p_1} \sum_{\substack{p_2 \in D_j \\ p_2 \ne p_1}}\frac{1}{p_2} \cdots \sum_{\substack{p_{b_j}\in D_j \\ p_{b_j} \ne p_i\\(1 \leq i \leq b_j-1)}}\frac{1}{p_{b_j}}\bigg).
		\end{equation}
		We have
		\begin{align*}
			\sum_{\substack{p_{r}\in D_j \\ p_{r} \ne p_i\\(1 \leq i \leq r-1)}}\frac{1}{p_{r}} &= \sum_{\substack{p_{r}\in D_j}}\frac{1}{p_{r}} - \sum_{i = 1}^{r-1}\frac{1}{p_{i}}\\
			&\geq \density \log 2 - \frac{b_j}{\lambda_{j-1}},
		\end{align*}
		by \Cref{defining Q}. Returning to \eqref{eq LB tau 1}, we have
		
		\begin{align*}
		\sum_{a \in \mathscr{A}(\bm{b}) } \frac{1}{a} &\geq \prod_{j = M}^h \frac{1}{b_j!}\bigg(\density \log 2 - \frac{b_j}{\lambda_{j-1}}\bigg)^{b_j}.
		\end{align*}
		By \Cref{lem: size of lambda}, we know there exists a constant $R=R(c,\kappa)$ depending only on $c,\kappa$ such that
		$$\exp{[2^{j-R}]} \leq \lambda_j \leq \exp{[2^{j+R}]}.$$
		 Combining this with the assumption $b_j \leq M j$, we have 
		 \begin{align*}
		 	\prod_{j = M}^h \frac{1}{b_j!}\bigg(\density \log 2 - \frac{b_j}{\lambda_{j-1}}\bigg)^{b_j} &\gg \frac{(\density \log 2)^{b_M + \cdots +b_h}}{b_M! \cdots b_h !} \prod_{j = M}^h \bigg(1 - \frac{M j}{\log 2 \exp[2^{j-1-R}]}\bigg)^{M j}\\
		 	&\gg_{c,\kappa} \frac{(\density \log 2)^{k}}{b_M! \cdots b_h !},
		 \end{align*}
		 since $k = \sum_{j = M}^h b_j$.
	\end{proof}
	
	\begin{proof}[Proof of \Cref{LB W}]
		Write $a = p_1 \cdots p_k$ and $k_j = \sum_{i \leq j} b_i$ for $M \leq j \leq h$. Suppose 
		\begin{equation}\label{eq 1 lb w}
			p_i \in D_j \quad i \in (k_{j-1},k_j].
		\end{equation}
		By the definition of the function $W$,  
		$$W(p_1 \cdots p_k) = \#\big\{(d,d') : d,d'|p_1\dots p_k, |\log (d/d')| \leq \log 2\big\}.$$
		Thus, $W(p_1 \cdots p_k)$ is the number of ordered pairs of subsets of $\{1,\dots,k\}$ for which
		\begin{equation}\label{eq 2 lb w}
			\Big|\sum_{i \in Y}\log p_i - \sum_{i \in Z} \log p_i\Big| \leq \log 2
		\end{equation}
		
		Notice that 
		\begin{equation} \label{eq 3 lb w}
			\frac{W(a)}{a} \leq \frac{1}{b_M! \cdots b_h!} \sum_{Y,Z \subseteq \{1,\dots,k\}}  \sum_{\substack{p_1,\dots,p_k\\ \eqref{eq 1 lb w}, \eqref{eq 2 lb w}}} \frac{1}{p_1\cdots p_k}.
		\end{equation}
	
	Let us first treat the diagonal terms $Y = Z$. For such terms, \eqref{eq 2 lb w} is satisfied for all possible subsets $Y$. The inner sum of \eqref{eq 3 lb w} is thus $\ll  (\density \log 2)^k$ by repeated use of \Cref{defining Q}. As such, we have
	\begin{align} \label{eq 3.5 lb w}
		\sum_{\substack{Y,Z \subseteq \{1,\dots,k\} \\ Y = Z}}  \sum_{\substack{p_1,\dots,p_k\\ \eqref{eq 1 lb w}, \eqref{eq 2 lb w}}} \frac{1}{p_1\cdots p_k} & \ll (\density \log 2)^k\sum_{Y \subseteq \{1,\dots,k\}} 1 \\
		&= (2\density \log 2)^k. \nonumber
	\end{align}
	When $Y \ne Z$, the trick is to first fix the value of $I := \max (Y \Delta Z);$ the maximal element in the symmetric difference of $Y$ and $Z$. Suppose without loss of generality that $I \in Y \setminus Z$. The condition \eqref{eq 2 lb w} implies that
	$$\Big|\log p_I + \sum_{\substack{i \in Y \setminus Z \\ i \ne I}} \log p_i - \sum_{i \in Y \setminus Z} \log p_i \Big|,$$
	so that $U \leq p_I \leq 4U,$ where $U$ depends only on $\{p_i\}_{i \ne I}$, and is defined by
	$$U := \frac{1}{2}\exp\Big[\sum_{i \in Y \setminus Z} \log p_i - \sum_{\substack{i \in Y \setminus Z \\ i \ne I}} \log p_i\Big].$$
	Although we have made $U$ explicit, the precise definition is immaterial for this proof. If we define $E(I)$ to be the unique integer such that $p_I \in D_{E(I)}$, then by \Cref{defining Q}, we have
	\begin{equation}\label{eq 4 lb w}
		\sum_{\substack{U \leq p_I \leq 4U \\ p_I \in D_{E(I)}}} \frac{1}{p_I} \ll \frac{1}{\max(\log U, \log \lambda_{E(I)-1})} \ll_{c,\kappa} 2^{-E(I)}.
	\end{equation}
	The bound is independent of $U$. Applying \Cref{defining Q} and \eqref{eq 4 lb w} shows that
	\begin{align} \label{eq 5 lb w}
		\sum_{\substack{Y,Z \subseteq \{1,\dots,k\} \\ \max(Y \Delta Z) = I}}  \sum_{\substack{p_1,\dots,p_k\\ \eqref{eq 1 lb w}, \eqref{eq 2 lb w}}} \frac{1}{p_1\cdots p_k} &\ll_{c,\kappa} 2^{-E(I)} (\density \log 2)^k \sum_{\substack{Y,Z \subseteq \{1,\dots,k\} \\ \max(Y \Delta Z) = I}} 1\\ &= 2^{I-E(I)} (2\density \log 2)^k \nonumber
	\end{align}
	The last line above comes from the fact that there are $2^{k+I-1}$ pairs $(Y,Z)$ such that $\max(Y \Delta Z) = I$. Indeed, for each $i < I$, we have the $4$ possibilities $i \in Y \setminus Z, i \not \in Z \setminus Y, i \in Y \cap Z, i \in (Y \cup Z)^c$. For each $i > I$, we have the $2$ possibilities $i \in Y \cap Z, i \in (Y \cup Z)^c$. And so there are $4^{I-1}2^{k-I+1} = 2^{k+I-1}$ pairs.
	
	Combining \eqref{eq 3.5 lb w} and \eqref{eq 5 lb w} gives
	\begin{equation}
		\sum_{Y,Z \subseteq \{1,\dots,k\}}  \sum_{\substack{p_1,\dots,p_k\\ \eqref{eq 1 lb w}, \eqref{eq 2 lb w}}} \frac{1}{p_1\cdots p_k} \ll_{c,\kappa}(2\density \log 2)^k\Big(1+ \sum_{I = 1}^k 2^{I-E(I)}\Big).
	\end{equation}
	To finish, note that from \eqref{eq 1 lb w}, we have $E(I) = j$ for all $I \in (k_{j-1},k_j]$, and so
	$$\sum_{I = 1}^k 2^{I-E(I)} = \sum_{j = M}^h 2^{-j} \sum_{I \in (k_{j-1},k_j]} 2^{I} \ll \sum_{j = M}^h 2^{k_j-j}$$

\end{proof}

\begin{proof}[Proof of \Cref{lem from L to Yk}]
	Let $h = \Tilde{v}+M-1$. Suppose that $\bm{b} = (b_1,\dots,b_h)$ is such that $b_1 +\cdots + b_h = k$ and $b_j = 0$ for $j < M$. Further assume that
	\begin{equation}\label{eq 1 lem 3.5}
		b_j \leq 2^{j/10}\text{ and } b_{h+1-j} \leq 2^{(M+j)/10} \text{ for } 1 \leq j \leq h.
	\end{equation}
	We claim that any integer $a \in \mathscr{A}(\bm{b})$ with $\bm{b}$ as above satisfies $a \leq y^{1/8}$ as long as $M$ is sufficiently large. To see this, note that by \Cref{lem: size of lambda}, there exists $R = R(c,\kappa) > 0$ such that 
	$$\log a = \sum_{p|a} \log p = \sum_{j=1}^h \sum_{\substack{p|a\\ p \in D_j}} \log p \le \sum_{j=1}^h b_j 2^{j+R} ,$$
	as there are $b_j$ primes dividing $a$ in each interval $D_j$, and each prime $p \in D_j$ satisfies $ \log p \leq  \exp[2^{j+R}]$. By the assumption \eqref{eq 1 lem 3.5}, we have 
	\begin{align*}
		\sum_{j=1}^h b_j 2^{j+R} &\leq \sum_{j = 1}^h 2^{(9j+M+h+1)/10+R}\\
		&\leq 2^{(h+M+1)/10 + R} \sum_{j = 1}^h2^{9(j-h)/10}\\
		&\leq 2^{h+M/10+R+2}\\ & \leq \frac{\log y}{2^{9M/10 - R -2}}.
	\end{align*}
	By choosing $M$ sufficiently large in terms of $R$ (hence large in terms of $c,\kappa$), we end up with $\log a \leq (\log y)/8$, hence $a \leq y^{1/8}$. 
	
	At this point, we relabel our $b_i$'s, writing $g_i = b_{M+i-1},\;(1 \leq i \leq \Tilde{v})$. Suppose that 
	\begin{equation}\label{eq 2 lem 3.5}
		g_i \leq M+i^2\text{ and } g_{\Tilde{v}-i+1} \leq M+i^2 \text{ for } 1 \leq i \leq \Tilde{v}.
	\end{equation}
	This condition implies condition \eqref{eq 1 lem 3.5} provided that $M$ is large. Write $$G_j = \sum_{i = 1}^j g_i = \sum_{i = M}^{M+j}b_i,$$ Let $C$ be a sufficiently large absolute constant, and suppose that
	\begin{equation}\label{eq 3 lem 3.5}
		j - G_j \geq \max \big\{-1,(\min\{G_j,k-G_j\})^{1/7}-C\big\} \quad (1 \leq j \leq \Tilde{v}).
	\end{equation}
	For $b_j$'s chosen as described, we group the terms of our sum by the value of $m = G_i$. Since $G_i$ is non-decreasing and takes integer values between $0$ and $k$, we have
	\begin{align*}
		\sum_{j =M}^h 2^{-j +b_M+\cdots +b_j} &= 2^{1-M}\sum_{i=1}^{\Tilde{v}} 2^{-(i -G_{i})} \\
		&= 2^{1-M} \sum_{m=0}^k \sum_{\substack{1 \leq i \leq \Tilde{v} \\ G_i = m}} 2^{-(i-m)}.
	\end{align*}
	For each $m \in \{0, \dots, k\}$, let $I_m = \{i \in [1,\Tilde{v}] : G_i = m\}$. If $I_m$ is non-empty, let $i_m$ be its smallest index. As $i$ increases within $I_m$, the difference $i-m$ increases by $1$ at each step. Thus, the inner sum is bounded by a geometric series:
	$$ \sum_{i \in I_m} 2^{-(i-m)} \leq \sum_{\ell = 0}^\infty 2^{-(i_m - m + \ell)} = 2 \cdot 2^{-(i_m - m)}. $$
	By \eqref{eq 3 lem 3.5} evaluated at the starting index $i_m$, we know $i_m - m \geq (\min\{m,k-m\})^{1/7} - C$. Therefore,
	$$ 2 \cdot 2^{-(i_m - m)} \leq 2^{C+1} 2^{-(\min\{m,k-m\})^{1/7}}. $$
	Summing this bound over all possible values of $m$, we obtain
	\begin{align*}
		\sum_{j =M}^h 2^{-j +b_M+\cdots +b_j} &\leq 2^{2-M+C} \sum_{m=0}^k 2^{-(\min\{m,k-m\})^{1/7}} \\
		&\leq 2^{3-M+C} \sum_{m=0}^{\lfloor k/2 \rfloor} 2^{-m^{1/7}}.
	\end{align*}
	Finally, by comparing with an integral, we see that 
	$$\sum_{m=0}^{\lfloor k/2 \rfloor} 2^{-m^{1/7}} \leq \int_0^\infty 2^{-t^{1/7}}dt \leq 2^{20}.$$
	Hence, from \eqref{L to factorials} we conclude
	\begin{equation*}
		\sum_{\substack{a \in \mathscr{A}(\bm{b}) \\ \omega(a) = k}} \frac{L(a)}{a} \gg_{c,\kappa}  \frac{(2 \density \log 2)^k}{b_M! \cdots b_h!} \bigg[ 1 + \sum_{j =M}^h 2^{-j +b_M + \cdots +b_j} \bigg]^{-1} \ge \frac{1}{1+2^{23+C-M}} \cdot \frac{(2 \density \log 2)^k}{g_1! \cdots g_{\Tilde{v}}!}.
	\end{equation*}
	Since $M$ depends only on $c,\kappa$, we can rewrite this as
	\begin{equation}\label{L to factorials 2}
		\sum_{\substack{a \in \mathscr{A}(\bm{b}) \\ \omega(a) = k}} \frac{L(a)}{a} \gg_{c,\kappa} 2^{-C} \cdot \frac{(2 \density \log 2)^k}{g_1! \cdots g_{\Tilde{v}}!}.
	\end{equation}
	The quantity $(g_1! \cdots g_{\Tilde{v}}!)^{-1}$ is naturally interpreted as the area of a geometric region in $\R^k$. For $\bm{g} = (g_1, \dots, g_{\Tilde{v}})$. define
	$$\mathcal{R}(\bm{g}) = \{x \in \R^k : 0 \leq x_1 \leq \cdots \leq x_k <v , x_{G_{i-1}+1},\dots,x_{G_{i}} \in [i-1,i)\;1 \leq i \leq \Tilde{v}\},$$
	with the convention that $G_0 = 0$. We then have
	\begin{equation} \label{eq 3.5 lem 3.5}
		\frac{1}{g_1! \cdots g_{\Tilde{v}}!} = \Vol(\mathcal{R}(\bm{g})).
	\end{equation}
	Let $$\mathscr{G} = \Big\{\bm{g}=(g_1, \dots, g_{\Tilde{v}}): \sum_{i = 1}^{\Tilde{v}} g_i =k, \text{ and } \eqref{eq 2 lem 3.5}, \eqref{eq 3 lem 3.5}\text{ hold} \Big\}.$$
	We will show that
	\begin{equation}\label{eq 4 lem 3.5}
		\Tilde{v} \mathcal{Y}_k (\Tilde{v},C) \subseteq \bigcup_{\bm{g} \in \mathscr{G}} \mathcal{R}(\bm{g}).
	\end{equation}
	Given $\xi_i \in \mathcal{Y}_k(\Tilde{v},C)$, we set $x_i = \Tilde{v}\xi_i$. We claim that $x_i \in \mathcal{R}(\bm{g}),$ where $g_i$ is defined to be the number of $x_j \in [i-1,i)$. Let us verify that \eqref{eq 3 lem 3.5} holds for such an $x_i$. By the definition of $g_i$, we have $j > x_{G_j}$ for each $j$. Combining this with property (iii) of $\mathcal{Y}_k(\Tilde{v},C)$, we see that
	$$j > x_{G_j} \geq \max\{G_j-1, G_j + (\min\{G_j,k-G_j\})^{1/7} -C\}.$$
	Hence, $j - G_j \geq \max\{-1,(\min\{G_j,k-G_j\})^{1/7}-C\},$
	and so \eqref{eq 3 lem 3.5} is satisfied. Similarly, by condition (ii) in the definition of $\mathcal{Y}_k(v',C)$, we have $x_{M+i^2} > i,$ so $g_i \leq M+i^2$ for $(1 \leq i \leq \sqrt{k-M})$. One can similarly show by using the condition $x_{k-(M+i^2)+1} < \Tilde{v}-i$ that $g_{\Tilde{v}-i+1} \leq M+i^2.$ Hence, \eqref{eq 2 lem 3.5} is satisfied. 
	
	 This shows that $\bm{x} = (x_1,\dots, x_k) \in \mathcal{R}(\bm{g})$ for a certain $\bm{g} \in \mathscr{G}$, and so \eqref{eq 4 lem 3.5} holds. As such, we have
	\begin{equation}\label{eq 5 lem 3.5}
		\Vol\Big(\bigcup_{\bm{g} \in \mathscr{G}} \mathcal{R}(\bm{g})\Big) \ge \Tilde{v}^k \Vol\Big( \mathcal{Y}_k(\Tilde{v},C)\Big).
	\end{equation}
	Combining \eqref{L to factorials 2}, \eqref{eq 3.5 lem 3.5} and \eqref{eq 5 lem 3.5}, we have proven that
	$$\sum_{\substack{a \in \mathscr{A}(\bm{b}) \\ \omega(a) = k}} \frac{L(a)}{a} \gg_{c,\kappa} (2 \Tilde{v} \density \log 2)^k\frac{\Vol\big( \mathcal{Y}_k(\Tilde{v},C)\big)}{2^C}.$$
	In order to complete the proof, we must replace $\Tilde{v}$ with $v$ in our final estimate. To do so, simply note that 
	$$\Tilde{v}^k = v^k (1-(2M)/v)^k \geq e^{-2M} \gg_M v^k.$$ 
	Since $M$ depends entirely on $c,\kappa,$ we can replace $\gg_M$ with $\gg_{c,\kappa}$. This completes the proof.
\end{proof}
	\begin{proof}[Proof of \Cref{lem LB Yk}]
		To begin, recall the definition of $S_k(u,v)$ from the proof of \Cref{lemma better smirnov}:
		$$S_k(u,v) := \Big\{\bm{\xi} \in \mathbb{R}^k : 0 \leq \xi_1 \leq \cdots \leq \xi_k \leq 1 ; \xi_i \geq \frac{i-u}{v}\Big\}.$$
		By \Cref{order stats lem} (i), one has
		\begin{equation}\label{apply wool thm Sk}
			\Vol(S_k(1,\Tilde{v})) = \frac{1}{k!} \frac{\Tilde{v}-k+1}{\Tilde{v}}\bigg(1+\frac{1}{\Tilde{v}}\bigg)^{k-1} \asymp \frac{\Tilde{v}-k+1}{\Tilde{v} \cdot k!}.
		\end{equation}
		The desired result will follow once we prove that $\Vol(\mathcal{Y}_k(\Tilde{v},C)) \gg \Vol(S_k(1,\Tilde{v})).$ 
		
		Let $\mathscr{B}(C)$ denote the geometric region in $\R^k$ corresponding to the event $\mathscr{B}$ defined in \Cref{lemma better smirnov}, evaluated with parameters $v = \Tilde{v}$ and $\epsilon = 1/6 - 1/7 = 1/42$. By definition, $\mathscr{B}(C)$ is exactly the set of points satisfying conditions (i) and (iii) of $\mathcal{Y}_k(\Tilde{v},C)$. By \Cref{lemma better smirnov}, we have 
		\begin{equation} \label{ihatethis3}
			\Vol(\mathscr{B}(C)) = \Vol(S_k(1,\Tilde{v})) \big(1 - O(C^{-1/42})\big).
		\end{equation}
		
		To account for condition (ii), we define the exceptional sets
		\begin{align*}
			V_1(a,b) &:= \Big\{\bm{\xi} \in \mathscr{B}(C) : \xi_a \leq \frac{b}{\Tilde{v}}\Big\}, \\
			V_2(a,b) &:=  \Big\{\bm{\xi} \in \mathscr{B}(C) : \xi_{k+1-a} \geq 1-\frac{b}{\Tilde{v}}\Big\}.
		\end{align*}
		Since $\mathscr{B}(C) \subseteq S_k(C, \Tilde{v})$, we can bound the volume of these sets exactly as in the proof of Lemma 4.9 of \cite{ford2008distribution}, substituting $C$ for the number $u_0$ which appears in the mentioned reference. This yields
		\begin{equation}
			\label{ihatethis2} \sum_{j = 1}^2 \sum_{1 \leq i \leq \sqrt{k-M}} \Vol\big(V_j(M+i^2,i)\big) \leq \frac{K_1 C}{2^M} \Vol(S_k(1,\Tilde{v})),
		\end{equation}
		for some absolute constant $K_1 > 0$. Since $\mathcal{Y}_k(\Tilde{v},C)$ contains all points in $\mathscr{B}(C)$ except those in the sets $V_j$, we have
		$$\Vol(\mathcal{Y}_k(\Tilde{v},C)) \geq \Vol(S_k(1,\Tilde{v})) \bigg(1 - O(C^{-1/42}) - \frac{K_1 C}{2^M}\bigg).$$
		
		By taking $C$ to be a sufficiently large absolute constant, we make the $O(C^{-1/42})$ term $\leq 1/4$. We may then ensure that $M$ was chosen sufficiently large in terms of $C$ so that $K_1 C 2^{-M} \leq 1/4$. This forces
		$$\Vol(\mathcal{Y}_k(\Tilde{v},C)) \geq \frac{1}{2}\Vol(S_k(1,\Tilde{v})) \gg \frac{\Tilde{v}-k+1}{\Tilde{v} \cdot k!}.$$
		To complete the proof, simply note that $\Tilde{v} = v +O(M),$ and so by taking $y$ sufficiently large in terms of $M$ (hence, in terms of $c,\kappa$ only), we have
		$$\frac{\Tilde{v}-k+1}{\Tilde{v} \cdot k!} \gg_M \frac{v-k+1}{v \cdot k!},$$
		which concludes the proof.
	\end{proof}
		\section{Proof of Upper Bound Lemmas}\label{sec upper bound proofs}
	\begin{proof}[Proof of \Cref{lem UB 1}]
		For each $n \leq x$ satisfying $\tau(n,y,2y) \geq 1$, write $n = n'n''$ where $n'$ is squarefree, $n''$ is squarefull, and $(n',n'') = 1$. At the cost of a small error, we can assume $n'' \leq y^{1/10}.$ Indeed, 
		$$\#\{n'n'' \in \S \cap [1,x] : n'' > y^{1/10} \} \ll \frac{x}{y^{1/20}}.$$
		As such, we have
		\begin{equation}\label{upper bound lemma 1 eq 1}
			H_Q(x,y,2y) = \sum_{\substack{n'n'' \in \S \cap [1,x]\\ n'' \leq y^{1/10}\\ \tau(n'n'',y,2y) \geq 1}}1 +O\left(\frac{x}{y^{1/20}}\right).
		\end{equation}
		Whenever $\tau(n'n'',y,2y) \geq 1$, then there exists $g$ dividing $n''$ such that $n'$ has a divisor in $(y/g,2y/g]$. Indeed, if $n'n'' = dm$ with $d \in (y,2y]$, then define
		$$f := (m,n''); \quad g := \frac{n''}{f},$$
		so that $(g,f) = (g,m) = 1$. Clearly $g|n''$. Furthermore, $g | d$ and $f|m$. We can then write $n' = (d/g)(m/f),$ and we see that $d/g$ is an integer dividing $n'$ which lies in $(y/g,2y/g]$. Returning to \eqref{upper bound lemma 1 eq 1}, we have
		\begin{equation}\label{upper bound lemma 1 eq 2}
			H_Q(x,y,2y) \leq \sum_{n'' \in \S \cap [1, y^{1/10}]}\sum_{\substack{g|n''}}H^*_Q \bigg(\frac{x}{n''},\frac{y}{g},\frac{2y}{g}\bigg) +O\left(\frac{x}{y^{1/20}}\right),
		\end{equation}
		where $H^*_Q(x,y,2y)$ denotes \textit{squarefree} integers not exceeding $x$ having a divisor in $(y,2y]$. Write $(y_1,z_1,x_1) = (y/g,2y/g,x/n''),$ where $g$ is some divisor of $n''$. We now want to show that for $y_1 \leq x_1 ^{5/9}$ and $y_1/2 \leq z_1 \leq x_1$, we have 
		\begin{equation}\label{upper bound lemma 1 eq 3}
			H^*_Q(x_1,y_1,z_1) - H^*_Q(x_1/2,y_1,z_1) \ll \frac{x_1}{(\log x_1)^{1-\density}} \sum_{a \in \mathscr{P}(2y) \cap S}\frac{L(a)}{a\log(y^{1/6}/a + P^+(a))^{1+\density}} 
		\end{equation}
		Write $y_2 = \frac{x_1}{2z_1}, z_2 = x_1/y_1$. Given a squarefree $n \in (x_1/2,x_1]$ for which $\tau(n,y_1,z_1)\geq 1$, let us write $n = m_1m_2$ with $y_i \leq m_i \leq z_i$. There exists $j' \in \{1,2\}$ such that $P^+(m_{j'})<P^+(m_{3-j'})$.\footnote{Here the squarefree assumption comes in handy.} We set $p = P^+(m_j')$. Write $n = apb$, with $P^+(a) < p < P^-(b)$ and $b > p$. From now on, write $j' = j$ for ease of notation. We claim that $\tau(ap,y_{j},z_j) \geq 1.$ To see this, suppose $j = 1$. Note that $b|m_2$, as all prime factors of $b$ are larger than $p$, and thus cannot divide $m_1$, which has largest prime factor smaller than $p$. So $m_1|ap$, and $m_1 \in (y_1,z_1]$. The case when $j=2$ is analogous. 
		
		Since $\tau(ap,y_{j},z_j) \geq 1$, then $p \geq y_j/a.$ We claim that $y_j \geq y_1^{1/6}$. Indeed, $y_2 = x_1/2z_1.$ So by our assumptions,
		$$y_2 = \frac{x g}{4n'' y} \geq \frac{y}{4n''} \geq y^{1/6}.$$
		This proves that $p \geq \max(y^{1/5}/a,P^+(a))$. As such, by \Cref{count s P-} the number of $b$ we count is
		$$\ll \frac{x_1}{ap (\log p)^\density (\log(x_1/ap))^{1-\density}} \ll \frac{x_1}{ap (\log x_1)^{1-\density} \log \max(y^{1/6}/a,P^+(a))^\density},$$
		Since $\tau(a,y_j/p,z_j/p) \geq 1$, then $\log(y_j/p) \in \mathscr{L}(a)$. By \Cref{defining Q}, we have
		$$\sum_{\substack{\log(y_j/p) \in \mathscr{L}(a) \\ p \geq P^+(a)}} \frac{1}{p} \ll  \frac{\density L(a)}{\log \max(y^{1/6}/a,P^+(a))}.$$
		\Cref{upper bound lemma 1 eq 3} follows. We can now apply this to finish the proof of our lemma. We must bound $H^*_Q \big(\frac{x}{n''},\frac{y}{g},\frac{2y}{g}\big)$ for $g|n'' \leq y^{1/10}$. To accomplish this, write $$(0,x/n''] = (0,x/(n'' y^{1/10})] \cup \bigcup_{r = 0}^{\frac{\log y}{10 \log 2}} (x/(2^{r+1}n''),(x/(2^{r}n'')].$$
		On the interval $(0,x/(n'' y^{1/10})]$ we include all integers in our $H_Q^*$ count. On each of the dyadic intervals $(x/(2^{r+1}n''),(x/(2^{r}n'')]$, we apply \eqref{upper bound lemma 1 eq 3}. This leads to the upper bound
		\begin{align*}
			&H^*_Q \bigg(\frac{x}{n''},\frac{y}{g},\frac{2y}{g}\bigg) \ll \\ &\ll \frac{x}{n'' y^{1/10} } + \sum_{r \leq \frac{\log y}{10 \log 2}}\frac{x}{2^r n'' (\log (x/2^r n'')^{1-\density}}\sum_{a \in \mathscr{P}(2y) \cap \S}\frac{L(a)}{a \log((y/g)^{1/6}/a+P^+(a))^{1+\density}}\\
			&\ll \frac{x}{n'' y^{1/10} } + \frac{x}{n'' (\log x)^{1-\density}}\sum_{a \in \mathscr{P}(2y) \cap \S}\frac{L(a)}{a \log((y/g)^{1/6}/a+P^+(a))^{1+\density}}\\
			&\ll \frac{x}{n'' y^{1/10}} + \frac{x}{n'' (\log x)^{1-\density}}\sum_{a \in \mathscr{P}(2y) \cap \S}\frac{L(a)}{a \log(y^{3/20}/a+P^+(a))^{1+\density}}.
		\end{align*}
		We always have $1 \in \mathscr{P}(2y) \cap \S$, and so the above $a$ sum is $$ \gg x L(1)/n'' = x (\log 2)/n'' \gg \frac{x}{n'' \log y} \gg \frac{x}{n''y^{1/10}}.$$ As such, we are simply left with the bound
		$$	H^*_Q \bigg(\frac{x}{n'' (\log x)^{1-\density}},\frac{y}{g},\frac{z}{g}\bigg) \ll \frac{x}{n''}\sum_{a \in \mathscr{P}(2y) \cap \S}\frac{L(a)}{a \log(y^{3/20}/a+P^+(a))^{1+\density}}.$$
		Substituting this bound into \eqref{upper bound lemma 1 eq 2} gives
		\begin{align*}
			&H_Q(x,y,2y) \ll \\ &\ll \frac{x}{(\log x)^{1-\density}}\sum_{n'' \in \S \cap [1, y^{1/10}]}\frac{1}{n''} \sum_{\substack{g|n''}}\sum_{a \in \mathscr{P}(2y) \cap \S}\frac{L(a)}{a \log(y^{3/20}/a+P^+(a) )^{1+\density}} +O\left(\frac{x}{y^{1/20}}\right)\\
			&\ll \frac{x}{(\log x)^{1-\density}}\sum_{a \in \mathscr{P}(2y) \cap \S}\frac{L(a)}{a \log(y^{3/20}/a+P^+(a) )^{1+\density}} \sum_{n'' \in \S \cap [1, y^{1/10}]}\frac{\tau(n'')}{n''} +O\left(\frac{x}{y^{1/20}}\right)\\
			&\ll \frac{x}{(\log x)^{1-\density}}\sum_{a \in \mathscr{P}(2y) \cap \S}\frac{L(a)}{a \log(y^{3/20}/a+P^+(a) )^{1+\density}}.
		\end{align*}
		This completes the proof of \Cref{lem UB 1}.\end{proof}
		\begin{proof}[Proof of \Cref{lem UB 4}]
			The proof is largely similar to the proof of Lemma 3.5 in \cite{ford2006integers}. Let us write $a =p_1 \cdots p_k$. Recalling the definition of $D_j$ and $\lambda_j$ from Section 3, let us define $j_i$ to be the unique integer for which $p_i \in D_{j_i}$. Note that $2 \leq p_i \leq 2y$ for each $i$, and so $0 \leq j_i \leq v +O(1).$ Let $\tilde{p_1} \leq \tilde{p_2} \leq \cdots \leq \tilde{p_k}$ denote the rearrangement of the $p_i's$ into increasing order, so that $\tilde{p_i} \in D_{\tilde{j_i}}$ and $\tilde{j_1} \leq \tilde{j_2} \leq \cdots \leq \tilde{j_k}$. By \Cref{lemma L properties}, we have 
			$$L(a) \leq 2^k \min_{0 \leq g \leq k}2^{-g}(\log(2\tilde{p_1}\cdots \tilde{p_g})) \leq 2^{k+K}F(\bm{j}),$$
			with $$F(\bm{j}) = \min_{0 \leq g \leq k}2^{-g}(2^{\tilde{j_1}} + \cdots + 2^{\tilde{j_g}} + 1).$$
			Thus,
			\begin{align*}
				T_Q(k,2y) &\leq \frac{2^{k+K}}{k!} \sum_{j_1,\dots, j_k = 1}^{v+O(1)}F(\bm{j}) \sum_{\substack{p_1, \dots p_k \\ p_i \in D_{j_i}}}\frac{1}{p_1 \cdots p_k}\\
				&\ll \frac{1}{k!}(2\density \log 2)^k \sum_{\bm{j}} F(\bm{j}).
			\end{align*}
			It is shown in \cite{ford2006integers}, p.11 that 
			$$\sum_{\bm{j}} F(\bm{j}) \ll (v+O(1))^k k! U_k(v).$$
			Since $k \leq 10v$, then $(v+O(1))^k \ll v^k,$ and this completes the proof of \Cref{lem UB 4}. \end{proof}
		\newpage
		\section*{Index of symbols}
		\begin{table}[h]
			\centering
			\renewcommand{\arraystretch}{1.3}
			\begin{tabular}{lp{6.5cm}l}
				\hline
				\textbf{Symbol} & \textbf{Meaning} & \textbf{Definition} \\
				\hline
				$Q$ & A fixed set of prime numbers & \eqref{definition of delta} \\
				$\density$ & The relative density of the set of primes $Q$ & \eqref{definition of delta} \\
				$\kappa$ & Parameter bounding the error term for the prime counting function of $Q$ & \eqref{definition of delta} \\
				$\S_Q$ (or $\S$) & Set of integers whose prime factors all belong to $Q$ & Section 1 \\
				$H_Q(x,y,z)$ & Number of integers $n \in \S_Q \cap [1,x]$ having a divisor $d \in (y,z]$ & Section 1 \\
				$G(\density)$ & Exponent of the logarithmic factor for the main asymptotic & \Cref{mainthm} \\
				$E(y;\density)$ & Function describing the transitional phase behavior near $\density = 1/\log 4$ & \Cref{mainthm} \\
				$A_Q(N)$ & Number of distinct integers of the form $ab$ with $a,b \in \S_Q \cap [1,N]$ & Section 1 \\
				$\lambda$ & The parameter $2\density \log \log y$ & \Cref{prop 1.4} \\
				$v$ & The integer $\lfloor\frac{1}{\log 2}\log \log y\rfloor$ & \Cref{prop 1.4} \\
				$\mathscr{L}(a)$ & Union of intervals $(\log d - \log 2, \log d]$ over divisors $d|a$ & \eqref{def L} \\
				$L(a)$ & Lebesgue measure of $\mathscr{L}(a)$ & \eqref{def L} \\
				$W(a)$ & Number of divisor pairs $(d,d')$ of $a$ with $|\log(d/d')| \leq \log 2$ & \Cref{cauchy schwarz L} \\
				$\mathcal{Y}_k(\Tilde{v},C)$ & Geometric region defining strong barrier conditions for normalized order statistics & \Cref{lem from L to Yk} \\
				$U_k(v)$ & Multivariate volume integral over the unit simplex used for upper bounds & \eqref{def Uk} \\
				$Q_k(u,v)$ & Generalized Smirnov statistic evaluating uniform order statistics under barrier constraints & \eqref{def Qk} \\
				\hline
			\end{tabular}
			\vspace{0.2cm}
			\caption{ Index of frequently used symbols (non-exhaustive).}
			\label{table:notation}
		\end{table}

\end{document}